\documentclass[11pt]{amsart}
\usepackage{amsfonts}
\usepackage{amssymb}
\usepackage{amsthm}
\usepackage{amsmath}
\usepackage{dsfont}
\usepackage{graphicx}
\usepackage{tikz}
\usepackage{pstricks-add}
\usepackage{verbatim}
\usepackage{color}
\usepackage{bm}
\usepackage{a4wide}
 \usetikzlibrary{arrows,automata}
\usepackage[latin1]{inputenc}
\definecolor {processblue}{cmyk}{0.96,0,0,0}
\usepackage{amsfonts,graphicx,amsmath,amssymb,hyperref,color}
\usepackage[english]{babel}
\usepackage{enumerate}

\newtheorem{theorem}{Theorem}[section]
\newtheorem{lemma}[theorem]{Lemma}
\newtheorem{proposition}[theorem]{Proposition}

 \newtheorem{main}{Theorem}

\theoremstyle{definition}
\newtheorem{definition}[theorem]{Definition}
\newtheorem{example}[theorem]{Example}

\theoremstyle{remark}
\newtheorem{remark}[theorem]{Remark}

\numberwithin{equation}{section}



\newcommand{\R}{\ensuremath{\mathbb{R}}}
\newcommand{\N}{\ensuremath{\mathbb{N}}}

\renewcommand{\c}{ {\mathbf{c}}}
\newcommand{\p}{ {\mathbf{p}}}

\newcommand{\uu} {{\mathcal{U}}}
 \newcommand{\us} {{\mathbf{U}}}
\newcommand{\ee} {{ {\mathcal{E}}}}

\newcommand{\set}[1]{\left\{#1\right\}}

\newcommand{\f}{\infty}
\newcommand{\de}{\delta}

\newcommand{\si}{\sigma}

\begin{document}

 \title{Multiple codings of self-similar sets with  overlaps}

\author{Karma Dajani}
\address[K. Dajani]{Department of Mathematics, Utrecht University,   Budapestlaan 6, P.O. Box 80.000, 3508 TA Utrecht, The Netherlands}
\email{k.dajani1@uu.nl}

\author[K.Jiang]{Kan Jiang}
\address[K. Jiang]{Department of Mathematics, Ningbo University, Ningbo, Zhejiang, People's Republic of China}
\email{kanjiangbunnik@yahoo.com}

\author[D.Kong]{Derong Kong}
\address[D. Kong]{College of Mathematics and Statistics, Chongqing University, 401331, Chongqing, P.R.China}
\email{derongkong@126.com}

\author[W.Li]{Wenxia Li}
\address[W. Li]{School of Mathematical Sciences, Shanghai Key Laboratory of PMMP, East China Normal University, Shanghai 200062,
People's Republic of China}
\email{wxli@math.ecnu.edu.cn}

\author[L.Xi]{Lifeng Xi}
\address[L. Xi]{Department of Mathematics, Ningbo University, Ningbo, Zhejiang, People's Republic of China}
\email{xilifengningbo@yahoo.com}

\date{\today}

\dedicatory{}


\subjclass[2010]{Primary: 11A63, Secondary: 37B10, 28A78, 10K50, 11K55}

\begin{abstract}
In this paper we consider a general class $\mathcal E$ of self-similar sets with complete overlaps. Given a self-similar iterated function system $\Phi=(E, \{f_i\}_{i=1}^m)\in\mathcal E$ on the real line, for each point $x\in E$ we can find a sequence $(i_k)=i_1i_2\ldots\in\{1,\ldots,m\}^\mathbb N$, called a \emph{coding} of $x$, such that
\[
x=\lim_{n\to\f}f_{i_1}\circ f_{i_{2}}\circ\cdots\circ f_{i_n}(0).
\]
For $k=1,2,\ldots, \aleph_0$ or $2^{\aleph_0}$ we investigate the subset $\mathcal U_k(\Phi)$ which consists of all $x\in E$ having precisely $k$ different codings. Among several equivalent characterizations we show that $\mathcal U_1(\Phi)$ is closed if and only if $\mathcal U_{\aleph_0}(\Phi)$ is an empty set.  Furthermore, we give explicit formulae for the Hausdorff dimension of $\mathcal U_k(\Phi)$, and show that the corresponding Hausdorff measure of $\mathcal U_k(\Phi)$ is always infinite for any $k\ge 2$. Finally, we explicitly  calculate the local dimension of the self-similar measure at each point in $\mathcal U_k(\Phi)$ and ${\uu_{\aleph_0}(\Phi)}$.
\end{abstract}

\keywords{unique expansion, multiple expansions, countable expansions, Hausdorff dimension}

\maketitle

\tableofcontents

\section{Introduction and main results}\label{sec:intro-main}

Given $\beta\in(1,2]$, for each $x\in I_\beta:=[0, 1/(\beta-1)]$ there exists a sequence $(d_i)$ of zeros and ones such that
\[
x=\sum_{i=1}^\f\frac{d_i}{\beta^i},
\]
and the sequence $(d_i)$ is called a $\beta$-\emph{expansion} of $x$.
Non-integer base expansions of reals, as a natural extension of dyadic expansions, were pioneered by R\'{e}nyi \cite{Renyi_1957} and Parry \cite{Parry_1960}. In 1990s Erd\H{o}s et al. \cite{Erdos_Horvath_Joo_1991, Erdos_Joo_1992, Erdos_Joo_Komornik_1990} discovered that for each $k=1,2,\cdots$ or $\aleph_0$ there  exist a base $\beta\in(1,2)$ and a  number  $x\in I_\beta$ such that $x$ has exactly $k$ different $\beta$-expansions. This turns out to be very different from the dyadic expansions, where each $x\in I_2$ has a unique dyadic expansion excluding {countably many} exceptions.  After the exciting {discovery} of Erd\H os and his collaborators there is a great interest in the study of non-integer base expansions. By using ergodic theorem Sidorov showed in \cite{Sidorov_2003} that for $\beta\in(1, 2)$ Lebesgue almost every  $x\in I_\beta$ has a continuum of $\beta$-expansions {(see also, \cite{Dajani_DeVries_2007})}. In the past 30 years there has been a great progress on non-integer base expansions, especially on unique $\beta$-expansions (see example, \cite{Glendinning_Sidorov_2001, DeVries_Komornik_2008, Komornik_Kong_Li_2015_1, Alcaraz-Baker-Kong-2019}). For finite $\beta$-expansions, very few is known (see \cite{Sidorov_2009, Baker_2015,Komornik-Kong-2019}).  This motivates us to study the multiple expansions (codings) in a fractal setting.
For more information on non-integer base expansions we refer the reader to the survey paper \cite{Komornik_2011} and the survey chapter \cite{deVries-Komornik-2016}.

The study of  multiple expansions also has connection to    Bernoulli convolutions.
 For $\beta\in(1,2]$ the Bernoulli convolution $\nu_{\beta}$  is defined as the  distribution of the random summation $\sum_{i=1}^{\infty}\frac{X_i}{\beta^i}$, where $X_1, X_2, \ldots$ are independent  random variables taking values  $0$ and $1$ with equal   probability. The Bernoulli convolution $\nu_\beta$ can also be viewed as the self-similar  measure  satisfying the equation
 $$\nu_{\beta}=\dfrac{1}{2}\nu_{\beta} \circ \phi_0^{-1}+\dfrac{1}{2}\nu_{\beta} \circ \phi_1^{-1},$$
where $\phi_0(x)=\frac{x}{\beta}, \phi_1(x)=\frac{x+1}{\beta}.$
Jessen and Wintner \cite{Jessen_Wintner_1935} proved that $\nu_{\beta}$ is of pure type, namely, it is either singular or absolutely continuous with respect to the Lebesgue measure. Erd\H{o}s \cite{Erdos_1939} proved that  if $\beta\in(1,2]$ is a  Pisot number, then $\nu_{\beta}$ is singular. Later, Solomyak \cite{Solomyak_1995} showed that for almost all $\beta\in(1,2]$ the Bernoulli convolution  $\nu_{\beta}$ is absolutely continuous with a density in $L^2(\mathbb{R})$.  The main difficulty in analyzing  $\nu_{\beta}$   is due to the overlapping of    the iterated function system $\{\phi_0,\phi_1\}$.  Feng and Sidorov \cite{Feng-Sidorov-2011} considered the growth rate of all $\beta$-expansions of a given point   $x\in[0, 1/(\beta-1)]$.  They showed that when $\beta$ is a Pisot number,   the growth rate is determined by the local dimension of $\nu_\beta$ at this given point $x$.
In other words, the multiple expansions have an intimate connection with  the local dimension of the   Bernoulli convolution.

Another reason for us to  investigate the multiple expansions comes from a seemingly unrelated development in arithmetic progressions.
An \emph{arithmetic progression} in $\mathbb{R}$ is of the form
\begin{equation*}
P=\{a,a+\delta ,a+2\delta ,a+3\delta ,\cdots ,a+(k-1)\delta \}
\end{equation*}%
for some $a\in \mathbb{R}$, $\delta \in \mathbb{R}^{+}$ and $k\in \mathbb{N}%
^{+}$. In this case   $k$ is called the length of the arithmetic progression $P$. To find an arithmetic progression in a  given set is a crucial problem in combinatorial number theory. Erd\H{o}s and Tur\'{a}n \cite{ErdosPaul}
conjectured that a subset of natural numbers with positive density necessarily
contains arbitrarily long arithmetic progressions. This conjecture was proved by   Szemer\'{e}di \cite{Szemeredi1,Szemeredi2} with a delicate combinatorial analysis.
Furstenberg \cite{Furstenberg} later proved that this conjecture is equivalent to the multiple recurrence theorem in ergodic theory.
 Motivated by this it is natural to consider arithmetic progressions in a fractal setting.   For some recent progress  in this direction, see \cite{Fraser-Yu-2018,Bro-Fis-Sim-2017,Jon,Laba2016} and references therein.
Recently, the second author and his collaborators  \cite{Jiang-Pei-Xi-2019} made use of   multiple $\beta$-expansions  to study the arithmetic progressions.
This constructed a `bridge' between arithmetic progressions and  multiple expansions.

In this paper we   consider multiple expansions for self-similar sets with overlaps.
For $1\le i\le m$ let $f_i(\cdot)$ be a \emph{similitude} on $\mathbb{R}$ defined by
\[
f_i(x)=r_i x+b_i,
\]
where $r_i\in(0,1)$ and $b_i\in\mathbb{R}$. Then there exists a unique non-empty compact set $E{\subset\R}$ satisfying (cf.~\cite{Hutchinson_1981})
\[
E=\bigcup_{i=1}^m f_i(E).
\]
In this case,   the couple $\Phi=(E, \{f_i\}_{i=1}^m)$ is called a \emph{self-similar iterated function system} (SIFS), and the compact set $E$ is called a \emph{self-similar set} generated by $\{f_i\}_{i=1}^m$. 

Now we introduce a   class $\mathcal{E}$ of SIFS $\Phi=(E, \{f_i\}_{i=1}^m)$ on $\R$ which will be  our object throughout  the paper.  Denote by $I=[a,b]$ the convex hull of the self-similar set $E$. We say that $\Phi\in\ee$ if it satisfies the following conditions {(A)--(D)}.
\begin{enumerate}
  \item[(A)]   $a=f_1(a)<f_2(a)<\cdots<f_m(a)<f_m(b)=b$.

  \item [(B)] $f_i(I)\cap f_{i+2}(I)=\emptyset$ for any $1\le i\le m-2$.

\item[(C)] There exist $i, j\in\{1,\cdots,m-1\}$ such that
\[f_i(I)\cap f_{i+1}(I){\ne}\emptyset\quad\textrm{ and}\quad f_j(I)\cap f_{j+1}(I){=}\emptyset.\]

  \item[(D)] If $f_i(I)\cap f_{i+1}(I)\ne \emptyset$, then there exist positive integers $u_i$ and $v_i$ such that
      \[
       f_i(I)\cap f_{i+1}(I)=f_{i m^{u_i}}(I)=f_{(i+1)1^{v_i}}(I),
      \]
 where $f_{i_1\cdots i_k}(\cdot):=f_{i_1}\circ\cdots\circ f_{i_k}(\cdot)$ denotes the compositions of the maps $f_{i_1}, \ldots, f_{i_k}$.
\end{enumerate}
The intervals $f_i(I), i=1,\cdots,m$ are called  the \emph{basic intervals} of $\Phi=(E, \{f_i\}_{i=1}^m)$.
 Then by Conditions (A)--(D) it follows that $m\ge 3$, and the basic intervals are located from left to right in the following way {(see, e.g., Figure \ref{fig:1})}.  The most left one is $f_1(I)$, and   the second one is $f_2(I)$, and the most right one is $f_m(I)$.  Furthermore, there exist  two neighboring basic intervals having a non-empty intersection, and also exist two neighboring basic intervals   having an empty intersection. But any three basic intervals must have an empty intersection. By Condition (D) it follows that {any} basic interval cannot be {included} in another {basic} interval, and the intersection of basic intervals cannot be a singleton.

 \begin{figure}[h!]
\begin{center}
\begin{tikzpicture}[
    scale=8,
    axis/.style={very thick, ->},
    important line/.style={thick},
    dashed line/.style={dashed, thin},
    pile/.style={thick, ->, >=stealth', shorten <=2pt, shorten
    >=2pt},
    every node/.style={color=black}
    ]


    \draw[important line]  (0,0)--(1,0);
      \node[] at (0,0.05){$0$};

        \node[] at (1, 0.05){$1$};
        \node[] at (1/2,0.05){$I$};
            \draw[important line] (0,-0.3)--(1/4, -0.3);

    \draw[important line] (3/16,-0.29)--(7/16, -0.29);

        \draw[important line] (33/64, -0.3)--(49/64, -0.3);
                \draw[important line] (3/4, -0.29)--(1, -0.29);

                 \node[] at (1/8, -0.33){$f_1(I)$};
                   \node[] at (5/16, -0.26){$f_2(I)$};
                    \node[] at (41/64,-0.33){$f_3(I)$};
                     \node[] at (7/8,-0.26){$f_4(I)$};
\end{tikzpicture}
\end{center}
\caption{The  basic intervals $f_1(I), f_2(I), f_3(I)$ and $f_4(I)$ with $I=[0, 1]$, and the maps $f_i, 1\le i\le 4$ are defined as in (\ref{eq:maps-1}) with $\beta=4, M=1, N=2$. Then $f_1(I)\cap f_2(I)=f_{14}(I)=f_{21}(I)$ and $f_{3}(I)\cap f_4(I)=f_{344}(I)=f_{411}(I)$. So the SIFS $\Phi=(E, \set{f_i}_{i=1}^4)\in\ee$. Then $\dim_H\uu_k(\Phi)=\dim_H\uu_1(\Phi)\approx 0.0943436$ and $\dim_H\uu_{2^{\aleph_0}}(\Phi)=\dim_H E\approx 0.934164$. See Example \ref{ex:1} for more explanations. }\label{fig:1}
\end{figure}

 Here we mention that Condition (D) is always associated with {the \emph{complete overlap} condition} (cf.~\cite{Hochman-2014}).
 This  class of complete overlapping  SIFSs has been studied by many people from different aspects. Ngai and Wang \cite{Ngai_Wang_2001} calculated the Hausdorff dimension of the overlapping self-similar sets. Rao and Wen \cite{Rao-Wen-1998} considered the topology of a special class of overlapping self-similar sets $E_\lambda$ generated by
$
 f_1(x)=\frac{x}{3},$ $f_2(x)=\frac{x+\lambda}{3}$ and $f_3(x)=\frac{x+2}{3},
$
 where $\lambda\in[0, 1]$. In particular, they determined for which rational number $\lambda$ the self-similar set $E_\lambda$ contains interior points (Kenyon \cite{Keyon_1997} proved similar result).
 The generating IFSs for $E_\lambda$ was recently investigated by Dajani et al.~\cite{Dajani-Kong-Yao-2019}. Guo et al.~\cite{Guo-Li-Wang-Xi-2012} considered the Lipschitz equivalence for a class of self-similar sets with complete overlaps.

Let $\Phi=(E, \{f_i\}_{i=1}^m)\in \mathcal{E}$. Then for any $x\in E$ there exists a sequence $(d_i)=d_1d_2\cdots\in\{1,2,\cdots,m\}^{\N}$ such that  (cf.~\cite{Falconer_1990})
\begin{equation}
  \label{eq:k1}
  x=\lim_{n\to\infty}f_{d_1\cdots d_n}(0)=:\pi((d_i)).
\end{equation}
The sequence $(d_i)$ is called a \emph{coding} of $x$ with respect to the \emph{alphabet} $\set{1,\ldots, m}$.  We point out that a point $x\in E$ may have multiple codings by {Conditions (C) and (D)}.
For $k=1,2,\cdots, \aleph_0$ or $2^{\aleph_0}$  we set
\[
\mathcal{U}_k(\Phi):=\{x\in E: x~\textrm{has exactly}~ k~\textrm{different codings}  \}.
\]
  Recently, the authors \cite{Dajani_Jiang_Kong_Li-2015} considered a special candidate $\Phi_0=(E, \{f_i\}_{i=1}^3)$ of $\ee$ which was first introduced by Ngai and Wang \cite{Ngai_Wang_2001}, where
\[
 f_1(x)=\frac{ x}{\beta}, \quad f_2(x)=\frac{ x+1}{\beta},\quad f_3(x)=\frac{ x+\beta}{\beta}
\]
with  $\beta>(3+\sqrt{5})/2$. Based on the characterization of $\uu_1(\Phi_0)$ given by Zou et al.~\cite{Zou_Lu_Li_2012} they showed that the Hausdorff dimensions of $\mathcal{U}_k(\Phi_0)$ are the same for all {integers} $k\ge 1$.

Inspired by the work of  \cite{Dajani_Jiang_Kong_Li-2015} we  investigate the multiple codings of the SIFSs in $\mathcal{E}$ from different aspects. (I) We classify the collection $\ee$ via the multiple codings set $\uu_k(\Phi)$, see Theorem \ref{th:1}; (II) We calculate in Theorem \ref{th:measures}  the Hausdorff dimension   of $\uu_k(\Phi)$, and show that $\uu_k(\Phi)$ has infinite Hausdorff measure for any $k\ge 2$ assuming  $\uu_k(\Phi)\ne \emptyset$; (III)   We determine in Theorem \ref{th:local dimensions} the local dimension of the self-similar measure at points in $\uu_k(\Phi)$ and $\uu_{\aleph_0}(\Phi)$.

\subsection{Classifications of $\mathcal E$}
Our first result focuses on the classification of    $\ee$. For a set $X$ we denote by $|X|$ its cardinality.

\begin{main}
  \label{th:1}
  Let $\Phi=(E, \{f_i\}_{i=1}^m)\in \mathcal{E}$. Denote by $I=[a, b]$ the convex hull of $E$.
  \begin{itemize}
  \item[{\rm(A)}.] The following statements are equivalent.
  \begin{enumerate}[{\rm(i)}]
  \item  $f_1(I)\cap f_2(I)\ne\emptyset$ or $f_{m-1}(I)\cap f_m(I)\ne\emptyset$.
  \item
  $
  \dim_H \mathcal{U}_k(\Phi)=\dim_H\mathcal{U}_1(\Phi)
$ for all {  integers} $k\ge 1$.
 \item   $ f_1(b)\in\mathcal{U}_{\aleph_0}(\Phi)$ or $ f_m(a)\in\mathcal{U}_{\aleph_0}(\Phi)$.
\item  $|\mathcal{U}_{\aleph_0}(\Phi)|=\aleph_0$.

\item   $\mathcal{U}_{1}(\Phi)$ is  not closed.
\end{enumerate}

\item[{\rm(B)}.] The following statements are also equivalent.
  \begin{enumerate}[{\rm(i)}]
   \item  $f_1(I)\cap f_2(I)=f_{m-1}(I)\cap f_m(I)=\emptyset$.
   \item  $ \dim_H\mathcal{U}_k(\Phi)=\dim_H\mathcal{U}_1(\Phi)$ if $k=2^\ell$ {  with $\ell\in \mathbb{N}\cup\set{0}$}, {or} $\mathcal{U}_k(\Phi)=\emptyset$ otherwise.
 \item   $ f_1(b)\notin\mathcal{U}_{\aleph_0}(\Phi)$ and $ f_m(a)\notin\mathcal{U}_{\aleph_0}(\Phi)$.
 \item   $\mathcal{U}_{\aleph_0}(\Phi)=\emptyset$.
     \item   $\mathcal{U}_{1}(\Phi)$ is closed.
\end{enumerate}
\end{itemize}
\end{main}

\begin{remark}\mbox{}

\begin{itemize}
{\item In the next  result Theorem \ref{th:measures} we will show that for any integer $k\ge 1$ with $\uu_k(\Phi)\ne\emptyset$ we must have $\dim_H\uu_k(\Phi)>0$.}

\item Theorem \ref{th:1} classifies the collection $\mathcal E$ and gives  several  dichotomies. For example,  for any $\Phi\in \mathcal{E}$, either
${|\mathcal{U}_{\aleph_0}(\Phi)|=\aleph_0}$
 or  $\mathcal{U}_{\aleph_0}(\Phi)=\emptyset.$

 \item Although the closeness of $\uu_1(\Phi)$ can be used to classify  $\ee$.  This does not mean the same holds for  $\uu_k(\Phi)$ with $k\ge 2$. In fact, we show in Propositions \ref{prop:notclose} and \ref{prop:close-notclose} that for any $k\ge 2$ with $\uu_k(\Phi)\ne\emptyset$, the set $\uu_k(\Phi)$ is not closed.

\end{itemize}
\end{remark}

 \subsection{Hausdorff dimension and Hausdorff measure of $\mathcal U_k(E)$}
In general, without the complete overlap condition $(D)$ it is hard to calculate the Hausdorff  dimensions of {$E$ and $\mathcal{U}_k(\Phi)$}. In our class $\mathcal{E}$ we are able to explicitly determine the Hausdorff dimensions  of the attractor $E$ and the set $\uu_k(\Phi)$.  Furthermore,   we show that  for $k\ge 2$ with $\uu_k(\Phi)\ne\emptyset$ the corresponding Hausdorff measure of $\uu_k(\Phi)$ is always infinite.

\begin{definition}\label{def:overlap-vector}
Given $\Phi=(E, \set{f_i}_{i=1}^m)\in\ee$,   the \emph{overlapping   vectors} $\mathbf u=(u_1,\ldots, u_m), \mathbf v=(v_1,\ldots, v_m)$ of $\Phi$  are defined  by
\[
\left\{\begin{array}{lll}
u_i=u, ~ v_i=v&\quad\textrm{if}\quad& f_i(I)\cap f_{i+1}(I)=f_{im^u}(I)=f_{(i+1)1^v}(I),\\
u_i=v_i=\f&\quad\textrm{if}\quad& f_i(I)\cap f_{i+1}(I)=\emptyset.
\end{array}\right.
\]
\end{definition}
Then by Definition \ref{def:overlap-vector} $u_m$ and $v_m$ are  always equal to $\f$. Observe by Theorem \ref{th:1} that $\uu_{\aleph_0}(\Phi)$ is either a countable set or an empty set. So it suffices to consider the Hausdroff dimension and Hausdorff measure of $\uu_k(\Phi)$ for $k=1,2,\ldots$ or $k=2^{\aleph_0}$.

\begin{main}\label{th:measures}
Let $\Phi=(E, \set{f_i}_{i=1}^m)\in\ee$ with  $f_i(x)=r_i x+b_i$ for $1\le i\le m$, and {let $\mathbf u=(u_1,\ldots, u_m), \mathbf v=(v_1,\ldots, v_m)$ be the overlapping vectors} defined as in Definition \ref{def:overlap-vector}.

\begin{enumerate}[{\rm(i)}]
\item The Hausdorff dimensions of $\uu_{2^{\aleph_0}}$ and $E$ are given by
\[
\dim_H \uu_{2^{\aleph_0}}(\Phi)=\dim_H E=t,
\]
where $t\in(0, 1)$ is the root of
\[ \sum_{i=1}^m r_i^t (1-r_m^{u_i t})=1.\]
Furthermore, the corresponding Hausdorff measures are positive and finite, i.e.,
\[
\mathcal H^{t}(\uu_{2^{\aleph_0}}(\Phi))=\mathcal H^{t}(E)\in(0, \f).
\]

\item For any $k\ge 1$ satisfying $\uu_k(\Phi)\ne\emptyset$, the Hausdorff dimension of $\uu_k(\Phi)$ is given by
\[
 \dim_H\uu_k(\Phi)=\dim_H\uu_1(\Phi)=s,
\]
where $s\in(0, 1)$ is the root of
\[
 \sum_{i=1}^m r_i^s\left(1-\frac{r_m^{u_i s}(2-r_1^{v_{m-1}s}-r_m^{u_1 s})}{1-r_1^{v_{m-1}s}r_m^{u_1 s}}\right)=1.
\]
Furthermore,  the corresponding Hausdorff measure of   $\uu_k(\Phi)$ admits
\[
 \mathcal H^s(\uu_1(\Phi))\in(0,\f)
\]
 and
 \[
  \mathcal H^s(\uu_k(\Phi))=\f\quad\textrm{for any }k\ge 2\textrm{ satisfying }\uu_k(\Phi)\ne\emptyset.
 \]
\end{enumerate}
\end{main}

\begin{remark}\mbox{}

\begin{itemize}
\item {By Propositions \ref{prop:measure-u1} and \ref{prop:measure-E} the sets $\uu_1(\Phi)$ and $E$} are both identical to strongly connected graph-directed sets satisfying the \emph{open set condition} (OSC).  So by \cite{Mauldin_Williams_1988} the results obtained in Theorem \ref{th:measures} also hold for packing dimension and corresponding packing measure.

\item After the paper was finished we notice that Theorem \ref{th:measures} (i) for the Hausdorff dimension of $E$ was also studied by Deng et al.~\cite[Theorem 8]{Deng-Harding-Hu-2009}. However, our method is different from theirs, where they determined the dimension of   $E$ by considering  an infinite iterated function system satisfying the OSC.

\item The Hausdorff dimension formula for $\uu_k(\Phi)$ described in Theorem \ref{th:measures} (ii) is in a compact form.  If $\Phi=(E, \set{f_i}_{i=1}^m)\in\ee$ with $f_1(I)\cap f_2(I)=f_{m-1}(I)\cap f_m(I)=\emptyset$, then $u_1=v_{m-1}=\f$. By Theorem \ref{th:measures} (ii) the Hausdorff dimension of $\uu_k(\Phi)$ can be simplified as $\dim_H\uu_k(\Phi)=s$, where $s\in(0, 1)$ is the root of
\[
1=\sum_{i=1}^m r_i^s(1-2 r_m^{u_i s}).
\]

  Similarly, if  $\Phi=(E, \set{f_i}_{i=1}^m)\in\ee$ with $f_1(I)\cap f_2(I)\ne\emptyset$ and $f_{m-1}(I)\cap f_m(I)=\emptyset$, then $u_1\in\N$ and $v_{m-1}=\f$. Again by Theorem \ref{th:measures} (ii) it follows that $\dim_H\uu_k(\Phi)=s$ satisfies
\[
1=\sum_{i=1}^m r_i^s(1-2 r_m^{u_i s}+r_m^{(u_i+u_1)s}).
\]

 Also, if   $\Phi=(E, \set{f_i}_{i=1}^m)\in\ee$ with $f_1(I)\cap f_2(I)=\emptyset$ and $f_{m-1}(I)\cap f_m(I)\ne \emptyset$, then $u_1=\f$ and $v_{m-1}\in\N$. By Theorem \ref{th:measures} (ii) the Hausdorff dimension of $\uu_k(\Phi)$ is given by $\dim_H\uu_k(\Phi)=s$, where $s\in(0, 1)$ is the root of
\[
1=\sum_{i=1}^m r_i^s(1-2 r_m^{u_i s}+r_m^{u_i s}r_1^{v_{m-1} s}).
\]
\end{itemize}
\end{remark}

 \subsection{Local dimension of self-similar measure in $\uu_k(\Phi)$ and $\uu_{\aleph_0}(\Phi)$}
 Given a probability vector $\p=(p_1,p_2,\ldots, p_m)$ with each $p_i>0$, let $\mu_\p$ be the self-similar measure defined on $\Phi=(E, \set{f_i}_{i=1}^m)\in\ee$ (cf.~\cite{Hutchinson_1981}). Then
 \[
 \mu_\p=\sum_{i=1}^m p_i\mu_\p\circ f_i^{-1}.
 \]
 The measure $\mu_\p$ can also be deduced from the projection of Bernoulli measure on the symbolic space $\set{1,\ldots, m}^\N$. More precisely, let $\nu_\p$ be the Bernoulli measure on $\set{1,\ldots,m}^\N$ defined by
 \[\nu_\p([i])=p_i\quad \textrm{for}\quad i=1,\ldots,m,\]
  where $[i]:=\set{(j_\ell)\in\set{1,\ldots, m}^\N: j_1=i}$ is a cylinder set. Then
 \[\mu_\p=\nu_\p\circ\pi^{-1},\]
  where $\pi$ is the projection map defined in (\ref{eq:k1}).

  Given $x\in E$, we define the \emph{lower and upper local dimensions} of $\mu_\p$ at $x$ by
  \[
  \underline{\dim}_{loc}\mu_\p(x):=\liminf_{r\to 0}\frac{\log \mu_\p(B(x,r))}{\log r},\quad \overline{\dim}_{loc}\mu_\p(x):=\limsup_{r\to 0}\frac{\log \mu_\p(B(x,r))}{\log r},
  \]
  where $B(x,r)=(x-r,x+r)$ is the open ball in $\R$ with center at $x$ and radius $r$. If the lower and upper local dimensions coincide, then the common value, denoted by $\dim_{loc}\mu_\p(x)$, is called the \emph{local dimension} of $\mu_\p$ at $x$.

  To determine the local dimension of $\mu_\p$ is a central topic of multifractal analysis. Recently, Ngai and Xie \cite{Ngai-Xie-2018} calculated the Hausdorff dimension of the self-similar measure $\mu_\p$, which provides a typical value of   $\dim_H\mu_\p(x)$.
For more result on the multifractal analysis  of $\mu_\p$ we refer to the papers of Lau and Ngai \cite{Lau-Ngai-1999}, Feng and Lau \cite{Feng-Lau-2009}, and references therein.

As a compensation of \cite{Ngai-Xie-2018} we explicitly calculate the  local dimension of $\mu_\p$ at points in $\uu_k(\Phi)$ and $\uu_{\aleph_0}(\Phi)$.
For $n\in\N$ let $\set{1,\ldots, m}^n$ be the set of all length $n$ words over the alphabet $\set{1,\ldots, m}$. Denote by $\set{1,\ldots, m}^*=\bigcup_{n=0}^\f\set{1,\ldots,m}^n$ the set of all words, where for $n=0$ we set $\set{1,\ldots, m}^0=\set{\epsilon}$ with $\epsilon$ the empty word.
 \begin{main}
 \label{th:local dimensions}
 Let $\Phi=(E, \set{f_i}_{i=1}^m)\in\ee$ with the convex hull $conv(E)=[a, b]$, and {let  $\mathbf u=(u_1,\ldots, u_{m}), \mathbf v=(v_1,\ldots, v_{m})$ be the overlapping vector} defined as in Definition \ref{def:overlap-vector}.
 \begin{enumerate}[{\rm(i)}]
 \item If $x\in\uu_k(\Phi)$, then there exist a word $\mathbf i\in\set{1,\ldots, m}^*$ and a unique $y\in\uu_1(\Phi)$  such that $x=f_{\mathbf i}(y)$. Furthermore,
 \begin{align*}
 \underline{\dim}_{loc}\mu_\p(x)&=\underline{\dim}_{loc}\mu_\p(y)=\liminf_{n\to\f}\frac{\sum_{k=1}^n\log p_{j_k}}{\sum_{k=1}^n\log r_{j_k}},\\
  \overline{\dim}_{loc}\mu_\p(x)&=\overline{\dim}_{loc}\mu_\p(y)=\limsup_{n\to\f}\frac{\sum_{k=1}^n\log p_{j_k}}{\sum_{k=1}^n\log r_{j_k}},
 \end{align*}
 where $j_1j_2\ldots$ is the unique coding of $y$.

 \item If $x\in\uu_{\aleph_0}(\Phi)$, then either $x=f_{\mathbf i}(f_1(b))$ for some $\mathbf i\in\set{1,\ldots, m}^*$ and then
 \[
 \dim_{loc}\mu_\p(x)=\min\set{\frac{\log p_m}{\log r_m}, \frac{\log p_2+(v_1-1)\log p_1}{u_1\log r_m}},
 \]
 or $x=f_{\mathbf i}(f_m(a))$ for some $\mathbf i\in\set{1,\ldots, m}^*$ and thus
 \[
 \dim_{loc}\mu_\p(x)=\min\set{\frac{\log p_1}{\log r_1}, \frac{\log p_{m-1}+(u_{m-1}-1)\log p_m}{v_{m-1}\log r_1}}.
 \]
 \end{enumerate}
 \end{main}

\subsection{An example}
 At the end of this section we give an example to illustrate the main results Theorems \ref{th:1}--\ref{th:local dimensions}.

 \begin{example}\label{ex:1}
 Given two  integers $M, N\ge 1$, let $\beta\in(1,4]$ satisfy
 \begin{equation}\label{eq:ex-1}
 \frac{4}{\beta}<1+\frac{1}{\beta^M}+\frac{1}{\beta^N}.
 \end{equation}
 Define the maps
 \begin{equation}\label{eq:maps-1}
 \begin{split}
 f_1(x)&=\frac{x}{\beta},\hspace{3.2cm}  f_2(x)=\frac{x}{\beta}+\frac{1}{\beta}-\frac{1}{\beta^{M+1}},\\
 f_3(x)&=\frac{x}{\beta}+1-\frac{2}{\beta}+\frac{1}{\beta^{N+1}},\quad f_4(x)=\frac{x}{\beta}+1-\frac{1}{\beta}.
 \end{split}
 \end{equation}
 Then there exists a unique non-empty compact set $E$ satisfying $E=\bigcup_{i=1}^4 f_i(E)$. It is easy to check that the convex hull of $E$ is the unit interval $I=[0, 1]$. The basic intervals $f_i(I)$ with $1\le i\le 4$ are plotted as in Figure \ref{fig:1}.
 The inequality in (\ref{eq:ex-1}) guarantees that $f_2(I)\cap f_3(I)=\emptyset$. Furthermore, one can verify that
 \begin{align*}
 f_1(I)\cap f_2(I)&=f_{14^M}(I)=f_{21^M}(I),\\
 f_3(I)\cap f_4(I)&=f_{34^N}(I)=f_{41^N}(I).
 \end{align*}
 So, $\Phi=(E, \set{f_i}_{i=1}^4)\in\ee$ for any integers $M, N\ge 1$.

First, by Theorems \ref{th:1} and \ref{th:measures} it follows that for any $k\in\N$,
\[
\dim_H\uu_k(\Phi)=\dim_H\uu_1(\Phi)=s,
\]
where $s\in(0,1)$ satisfies
\[
\beta^s+2\beta^{N s}+2 \beta^{M s}=4+\beta^{(N-M)s}+\beta^{(M-N)s}.
\]
Moreover, the corresponding Hausdorff measures satisfy
\[
\mathcal H^s(\uu_1(\Phi))\in(0, \f),\quad\textrm{and}\quad \mathcal H^s(\uu_k(\Phi))=\f\quad\forall ~ k\ge 2.
\]

Second, by Proposition \ref{cor:countable} it follows that $\uu_{\aleph_0}(\Phi)$ is a countable set consisting of all points with a coding ending with either $14^\f$ or $4 1^\f$. Furthermore, by Theorem \ref{th:measures} it follows that the Hausdorff dimensions of $\uu_{2^{\aleph_0}}(\Phi)$ and $E$ are given by
\[
\dim_H\uu_{2^{\aleph_0}}(\Phi)=\dim_H E=t,
\]
where $t\in(0, 1)$ satisfies
\[
\beta^t+\beta^{-Mt}+\beta^{-Nt}=4.
\]
And the corresponding Hausdorff measures admit
\[
\mathcal H^t(\uu_{2^{\aleph_0}}(\Phi))=\mathcal H^t(E)\in(0, \f).
\]

 Third, for a given probability vector $\p=(p_1, p_2, p_3, p_4)$ with each $p_i>0$ we define the corresponding self-similar measure $\mu_\p$ by
 \[
 \mu_\p=\sum_{i=1}^4 p_i\mu_\p\circ f_i^{-1}.
 \]
 Then by Theorem \ref{th:local dimensions} it follows that for any $x\in\uu_k(\Phi)$ there exits a unique $y\in\uu_1(\Phi)$ such that
  \begin{align*}
 \underline{\dim}_{loc}\mu_\p(x)&=\underline{\dim}_{loc}\mu_\p(y)=\frac{-1}{\log \beta}\limsup_{n\to\f}\frac{\sum_{k=1}^n\log p_{j_k}}{n},\\
  \overline{\dim}_{loc}\mu_\p(x)&=\overline{\dim}_{loc}\mu_\p(y)=\frac{-1}{\log \beta}\liminf_{n\to\f}\frac{\sum_{k=1}^n\log p_{j_k}}{n},
 \end{align*}
 where $j_1j_2\ldots$ is the unique coding of $y$. Moreover, for any $x\in\uu_{\aleph_0}(\Phi)$ we have the following two cases:
 \[x=f_{\mathbf i}\left(\frac{1}{\beta}\right)\quad \textrm{or}\quad x=f_{\mathbf i}\left(1-\frac{1}{\beta}\right)\]
  for some $\mathbf i\in\set{1,2,3,4}^*$. If $x=f_{\mathbf i}(\frac{1}{\beta})$, then
 \[
 \dim_{loc}\mu_\p(x)=\min\set{\frac{\log p_4}{-\log \beta}, \frac{\log p_2+(M-1)\log p_1}{-M\log \beta}}.
 \]
 If $x=f_{\mathbf i}(1-\frac{1}{\beta})$, then
 \[
 \dim_{loc}\mu_\p(x)=\min\set{\frac{\log p_1}{-\log \beta}, \frac{\log p_3+(N-1)\log p_4}{-N\log \beta}}.
 \]
 In particular, if $p_i=\frac{1}{4}$ for all $1\le i\le 4$, then
 \[
 \dim_{loc}\mu_\p(x)=\frac{\log 4}{\log \beta}\quad\textrm{for all }x\in\uu_{\aleph_0}(\Phi).
 \]
 \end{example}

 \medskip

The rest of the paper is organized as follows. The proof of Theorem \ref{th:1} is given in Sections \ref{sec:finite codings}--\ref{sec:closeness}. First in   Section \ref{sec:finite codings} we characterize the set of points in $E$ with finitely many codings, and prove the equivalence (i) $\Leftrightarrow$ (ii) in Theorem \ref{th:1} (A) and (B). Second in Section \ref{sec:countable codings} we describe the set of points in $E$ with countably   many codings, and establish the equivalence (i) $\Leftrightarrow$ (iii) $\Leftrightarrow$ (iv). Finally in Section \ref{sec:closeness} we investigate the topology of $\uu_k(\Phi)$, and deduce the equivalence (i) $\Leftrightarrow$ (v). In Section \ref{sec:geometrical structure} we show that the univoque set $\uu_1(\Phi)$ and the self-similar set $E$ are both identical to the strongly connected graph-directed sets satisfying the OSC.  Based on this we prove Theorem \ref{th:measures} in Section \ref{sec:hausdorff measure-uk}   for the Hausdorff dimensions and Hausdorff measures of $E$ and $\uu_k(\Phi)$. In particular, we show  that the corresponding Hausdorff measure of $\uu_k(\Phi)$ is always infinite for any $k\ge 2$ satisfying $\uu_k(\Phi)\ne\emptyset$. In Section \ref{sec:local dimension} we prove Theorem \ref{th:local dimensions} for the local dimension of the self-similar measure $\mu_\p$ at points in $\uu_k(\Phi)$ and $\uu_{\aleph_0}(\Phi)$. Finally, in Section \ref{sec:final remarks} we pose some remarks on a possible extension of our  class $\ee$.

\section{Finitely many codings}\label{sec:finite codings}
Let $\Phi=(E, \{f_i\}_{i=1}^m)\in\mathcal{E}$ and $k\in\N$. In this section we will consider the set $\mathcal{U}_k(\Phi)$ consisting of all $x\in E$ having precisely $k$ different codings with respect to $\{f_i\}_{i=1}^m$, and prove the equivalence  $(i)\Leftrightarrow(ii)$ in Theorems \ref{th:1} (A) and (B), respectively.

 Let $\set{1,\ldots, m}^\N$ be the set of sequences $(d_i)$ with each $d_i\in\set{1,\ldots, m}$, and let $\set{1,\ldots, m}^*$ be the set of all finite words over the alphabet $\set{1,\ldots, m}$.  For any two words $\mathbf c=c_1\ldots c_m, \mathbf d=d_1\ldots d_n\in\set{1,\ldots, m}^*$ we denote by $\mathbf c\mathbf d=c_1\ldots c_m d_1\ldots d_n$ their concatenation. Moreover, for $k\in\N$ we write for $\mathbf c^k=\mathbf c\mathbf c\cdots\mathbf c$
 the $k$ times concatenation of $\mathbf c$ with itself, and denote by $\mathbf c^\f$ the infinite concatenation of $\mathbf c$ with itself. Recall that $\set{f_i}_{i=1}^m$ is the collection of contractive similitudes.
For a word $\mathbf d=d_1d_2\ldots d_n\in\set{1,\ldots, m}^*$ we denote by $f_{\mathbf d}=f_{d_1}\circ f_{d_{2}}\circ\cdots \circ f_{d_n}$ the compositions of the maps $f_{d_1}, \ldots, f_{d_n}$. In particular, for the empty word $\epsilon$ we set $f_{\epsilon}$ the identity map.

For a set $F\subset \R$ we denote by $conv(F)$ the \emph{convex hull} of $F$.
\begin{lemma}
  \label{lem:21}
  Let $(E,\{f_i\}_{i=1}^m)\in\mathcal{E}$ with $I=conv(E)$. If
$
 f_{im^{u}}(I)=f_{(i+1)1^{v}}(I)
$
  for some $1\le i\le m-1$ and $u, v\in \N$,
  then
$f_{im^{u}}(\cdot)=f_{(i+1)1^{v}}(\cdot).$
\end{lemma}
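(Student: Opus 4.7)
The plan is to exploit the orientation-preserving nature of every $f_i$. Set $g_1:=f_{im^u}$ and $g_2:=f_{(i+1)1^v}$. Each $g_j$ is a composition of finitely many similitudes $x\mapsto r_k x+b_k$ with $r_k\in(0,1)$ positive, so each is itself a similitude of the form $g_j(x)=\rho_j x+\tau_j$ with $\rho_j>0$. In particular $g_1$ and $g_2$ are strictly increasing affine maps on $\R$.

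Next, because a strictly increasing continuous map sends $I=[a,b]$ onto the closed interval with endpoints $g_j(a)$ and $g_j(b)$, the hypothesis $g_1(I)=g_2(I)$ reads
\[
[g_1(a),g_1(b)]=[g_2(a),g_2(b)].
\]
Matching left and right endpoints gives $g_1(a)=g_2(a)$ and $g_1(b)=g_2(b)$. Since $a\ne b$, two affine maps agreeing at two distinct points must coincide on all of $\R$, and hence $g_1\equiv g_2$, as required.

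I do not anticipate a genuine obstacle here. The one place where care is needed is ensuring that orientation really is preserved, so that equality of image intervals forces the left endpoint of one to match the left endpoint of the other (rather than being equated with a right endpoint). The assumption $r_i\in(0,1)$ in the definition of an SIFS rules this out at once. After this observation, the argument is pure linear algebra: two points determine an affine line. The role of the lemma in the sequel is presumably to legitimize treating the overlap relation from condition (D) as an honest identity between compositions, so that symbolic substitutions of the block $im^u$ by $(i+1)1^v$ in codings do not change the point they represent.
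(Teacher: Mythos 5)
Your argument is correct and is essentially the paper's own proof: both write the compositions as increasing affine maps, match endpoints of the equal image intervals to get agreement at $a$ and $b$, and conclude that two affine maps agreeing at two points coincide. Your explicit remark about orientation preservation just makes precise a step the paper leaves implicit.
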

\begin{proof}
  Note that for any $x\in\R$ we can write
  \begin{equation}\label{eq:k2}
  f_{im^{u}}(x)=r x+t,\quad f_{(i+1)1^{v}}(x)=r' x+t',
  \end{equation}
for some $r, r'\in(0,1)$ and $t, t'\in\R$. Suppose that $I=[a, b]$. Then by using $f_{im^{u}}(I)=f_{(i+1)1^{v}}(I)$ it follows that
\begin{align*}
r a+t&=f_{im^{u}}(a)=f_{(i+1)1^{v}}(a)=r' a+t',\\
 rb+t&=f_{im^{u}}(b)=f_{(i+1)1^{v}}(b)=r' b+t'.
\end{align*}
This implies $r=r'$ and $t=t'$. By (\ref{eq:k2}) we have $f_{im^{u}}(\cdot)=f_{(i+1)1^{v}}(\cdot)$.
\end{proof}

\begin{lemma}
  \label{lem:key lem}
  Let $(E, \{f_i\}_{i=1}^m)\in\mathcal{E}$ with $I=conv(E)$. If
\[
  x\in f_i(I)\cap f_{i+1}(I)=f_{im^{u}}(I)=f_{(i+1)1^{v}}(I)
\]
for some $i\in\{1,\cdots,m-1\}$ and $u, v\in\N$, then all codings of $x$ either begin with $im^{u-1}$  or begin with $(i+1)1^{v-1}$.
\end{lemma}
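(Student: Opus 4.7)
The plan is to classify the codings of $x$ by their first digit---which must be $i$ or $i+1$---and then iteratively force the next $u-1$ (respectively $v-1$) digits by invoking Condition (D) together with the disjointness of non-adjacent fundamental intervals.

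First I will observe that Condition (C) forces $m \geq 3$, since it demands both a disjoint pair and a non-disjoint pair among consecutive fundamental intervals. Combined with (A) and (B) this yields $f_j(I) \cap f_k(I) = \emptyset$ whenever $|j-k| \geq 2$; in particular $f_1(I) \cap f_m(I) = \emptyset$. Since any coding of $x \in E$ with first digit $d_1$ satisfies $x \in f_{d_1}(E) \subseteq f_{d_1}(I)$, and $x$ lies in $f_i(I) \cap f_{i+1}(I)$, the above disjointness restricts $d_1$ to $\{i, i+1\}$.

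Assuming $d_1 = i$, writing $x = f_i(y)$ gives $y = f_i^{-1}(x) \in f_{m^u}(I) \cap E$, and it remains to verify the sub-claim: any $z \in f_{m^j}(I) \cap E$ with $j \geq 2$ has first digit $m$. Such a $z$ has first digit $d \in \{m-1, m\}$ by the disjointness argument; if $d = m-1$, then $z \in f_{m-1}(I) \cap f_m(I)$, which by Condition (D) equals $f_{m 1^{v_0}}(I)$ for some $v_0 \geq 1$. Intersecting with $z \in f_{m^j}(I)$ and stripping off one $f_m$ then gives $z \in f_m\bigl(f_{1^{v_0}}(I) \cap f_{m^{j-1}}(I)\bigr) \subseteq f_m\bigl(f_1(I) \cap f_m(I)\bigr) = \emptyset$ for $j \geq 2$, a contradiction. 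Iterating this sub-claim $u-1$ times on $y$ (stripping one $f_m$ at a time) forces the next $u-1$ digits of $x$ to be $m$, producing the prefix $im^{u-1}$. The case $d_1 = i+1$ is handled symmetrically under the exchange $m \leftrightarrow 1$, yielding the prefix $(i+1)1^{v-1}$; the degenerate instances $u = 1$ or $v = 1$ require no iteration since the prefix is then just $i$ or $i+1$.

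The main obstacle is that the digit $m-1$ cannot be excluded directly when $f_{m-1}(I) \cap f_m(I) \neq \emptyset$ (and symmetrically the digit $2$ in the other case); Condition (D) combined with the auxiliary disjointness $f_1(I) \cap f_m(I) = \emptyset$---available precisely because $m \geq 3$---is exactly what is needed to close this gap.
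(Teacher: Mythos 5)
Your proof is correct and follows essentially the same route as the paper's: restrict the first digit to $\{i,i+1\}$ via the disjointness of non-adjacent fundamental intervals, then rule out the digit $m-1$ (resp.\ $2$) at each subsequent step by combining Condition (D) with $f_1(I)\cap f_m(I)=\emptyset$, iterating to force the prefix $im^{u-1}$ (resp.\ $(i+1)1^{v-1}$). Your explicit sub-claim about points of $f_{m^j}(I)\cap E$ with $j\ge 2$ is just a cleaner packaging of the paper's inductive step.
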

\begin{proof}
Let $(d_i)$ be a coding of $x$ with respect to $\{f_i\}_{i=1}^m$. Note that $\pi(d_1d_2\ldots)=x\in f_i(I)\cap f_{i+1}(I)$ and that any three basic intervals have an empty intersection. Then
\[d_1=i\quad\textrm{ or} \quad d_1=i+1.\]
{If $d_1=i$ with $u=1$  or $d_1=i+1$ with $v=1$}, then we are done. So, we will finish the proof by considering   the following two cases.

Case (I). $d_1=i$ and $u>1$. Note that $x=\pi(i d_2d_3\cdots)\in f_{im^{u}}(I)$. Then
\begin{equation}
  \label{eq:a1}
 \pi(d_2d_3\cdots)\in f_{m^{u}}(I),
\end{equation}
{ and we claim that $d_2=m$.}

  Suppose on the contrary that $d_2\ne m$. Then in view of the location of these basic  intervals we have $d_2=m-1$. So, by (\ref{eq:a1}) and Condition (D) it follows
  that
  \[
  \pi(d_2d_3\cdots)\in f_{m^{u}}(I)\cap\big(f_{m-1}(I)\cap f_m(I)\big)\subseteq f_{m^{u}}(I)\cap f_{m1}(I),
  \]
  leading to a contradiction with $f_1(I)\cap f_m(I)=\emptyset$.
  Therefore, $d_2=m$. Iterating the above procedure  it follows that $d_2\cdots d_{u}=m^{u-1}$.

  Case (II). $d_1=i+1$ and $v>1$. Note that $x=\pi((i+1)d_2d_3\cdots)\in f_{(i+1)1^{v}}(I)$. Then
  \begin{equation}
    \label{eq:a2}
   \pi(d_2d_3\cdots)\in f_{1^{v}}(I),
  \end{equation}
  { and we will prove that $d_2=1$.}
If $d_2\ne 1$, then $d_2=2$. So, by (\ref{eq:a2}) and Condition (D) it follows that
  \[
  \pi(d_2d_3\cdots)\in f_{1^{v}}(I)\cap \big(f_1(I)\cap f_2(I)\big)\subseteq f_{1^{v}}(I)\cap f_{1m}(I),
  \]
  leading to a contradiction with $f_1(I)\cap f_m(I)=\emptyset$.
 Hence $d_2=1$. By iteration we conclude that $d_2\cdots d_{v}=1^{v-1}$.
\end{proof}
Now we turn to show that the Hausdorff dimensions of $\uu_k(\Phi)$ and $\uu_1(\Phi)$ coincide  for any $k\ge 2$ satisfying $\uu_k(\Phi)\ne\emptyset$. Later on in Section \ref{sec:hausdorff measure-uk} we will explicitly calculate the Hausdorff dimension of $\uu_1(\Phi)$. In particular, we have $\dim_H\uu_1(\Phi)>0$ for any $\Phi\in\ee$.
The upper bound of $\dim_H\mathcal{U}_k(\Phi)$ follows directly.
 \begin{lemma}
   \label{lem:22}
   Let $\Phi=(E,\{f_i\}_{i=1}^m)\in\mathcal{E}$. Then for any $k\in\N$ we have
   \[\dim_H\mathcal{U}_k(\Phi)\le \dim_H\mathcal{U}_1(\Phi).\]
 \end{lemma}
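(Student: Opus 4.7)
The plan is to cover $\U_k(E)$ by a countable family of contracted copies of $\U_1(E)$ and then invoke the bi-Lipschitz invariance together with the countable stability of Hausdorff dimension. Concretely, I will show that every $x\in\U_k(E)$ admits a representation $x=f_{d_1\cdots d_N}(y)$ for some finite word $d_1\cdots d_N\in\{1,\ldots,m\}^N$ and some $y\in\U_1(E)$; once this is established, the desired inequality follows immediately from elementary properties of $\dim_H$.

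The core step is the following observation. Fix $x\in\U_k(E)$ and any one of its codings $(d_i)$, and for $n\ge 0$ set $x_n:=\pi(d_{n+1}d_{n+2}\cdots)$, so that $x=f_{d_1\cdots d_n}(x_n)$. I claim $x_n\in\U_1(E)$ for some (in fact all sufficiently large) $n$. Suppose not; then $x_n$ admits at least two distinct codings for infinitely many indices $n_1<n_2<\cdots$. For each such $n_j$, replacing $d_{n_j+1}d_{n_j+2}\cdots$ by a coding of $x_{n_j}$ different from it produces a new coding of $x$ which coincides with $(d_i)$ through position $n_j$ but differs at position $n_j+1$; distinct values of $j$ therefore yield distinct codings. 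This exhibits infinitely many distinct codings of $x$, contradicting $k<\aleph_0$.

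Granted the claim, one obtains the inclusion
\[
\U_k(E)\subseteq\bigcup_{N\ge 0}\;\bigcup_{\omega\in\{1,\ldots,m\}^N} f_\omega(\U_1(E)),
\]
which is a countable union. Since each composition $f_\omega$ is a contracting similitude, it is bi-Lipschitz on $I$, so $\dim_H f_\omega(\U_1(E))=\dim_H\U_1(E)$. Countable stability of Hausdorff dimension then gives $\dim_H\U_k(E)\le\dim_H\U_1(E)$. The only spot requiring care is the verification that the ``switched'' codings exhibited in the claim are pairwise distinct, which is immediate from the fact that two sequences agreeing on a prefix and disagreeing at the next position are different infinite sequences; in particular the argument does not require Lemmas \ref{lem:21}--\ref{lem:key lem} and uses only the finiteness of $k$.
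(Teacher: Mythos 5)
Your overall strategy is exactly the paper's: cover $\U_k(E)$ by the countable family $\bigcup_{N}\bigcup_{\omega\in\{1,\dots,m\}^N}f_\omega(\U_1(E))$ and invoke countable stability of Hausdorff dimension together with the fact that each $f_\omega$ is a bi-Lipschitz similitude. The paper simply asserts the key fact that every coding of a point of $\U_k(E)$ eventually has a tail which is a unique coding; you attempt to justify it, which is the right instinct, but the justification as written has a flaw.

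The problem is the distinctness step. If $c^{(j)}$ is a coding of $x_{n_j}$ different from $d_{n_j+1}d_{n_j+2}\cdots$, the resulting coding $d_1\cdots d_{n_j}c^{(j)}$ of $x$ need \emph{not} differ from $(d_i)$ at position $n_j+1$: the alternative coding $c^{(j)}$ may begin with the digit $d_{n_j+1}$ and only branch off later. Hence the sequences produced for distinct $j$ are not automatically pairwise distinct; for $n_j<n_{j'}$ it can happen that $d_1\cdots d_{n_j}c^{(j)}=d_1\cdots d_{n_{j'}}c^{(j')}$, both branching off from $(d_i)$ at the same late position. The claim itself is true and the argument is easily repaired. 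One fix: choose the indices recursively, letting $q_j$ be the first position at which the $j$-th switched coding differs from $(d_i)$ and requiring $n_{j+1}\ge q_j$; then the $(j+1)$-st switched coding agrees with $(d_i)$ at position $q_j$ while the $j$-th does not, so the switched codings are pairwise distinct and $x$ has infinitely many codings, a contradiction. Alternatively, argue directly: if $x$ has exactly $k<\infty$ codings, let $P$ be the largest position at which two of them first disagree; for $n\ge P$ a second coding of $x_n$ would yield a coding of $x$ agreeing with $(d_i)$ through position $n\ge P$ yet different from it, which is impossible. With either repair, the inclusion and the dimension estimate go through exactly as you wrote them.
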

\begin{proof}
  Take  $x\in\mathcal{U}_k(\Phi)$. Then all  codings of $x$ will eventually end in
  $$\pi^{-1}(\mathcal{U}_1(\Phi))=\{(c_i)\in\{1,\cdots,m\}^{\infty}: \pi((c_i))\in\mathcal{U}_1(\Phi)\}.$$
  In other words,
    \[
  \mathcal{U}_k(\Phi)\subseteq \bigcup_{d_1\cdots d_n\in\{1,2,\cdots,m\}^*}f_{d_1\cdots d_n}(\mathcal{U}_1(\Phi)).
  \]
  Therefore, the lemma follows by the countable stability of Hausdorff dimension (cf.~\cite{Falconer_1990}).
\end{proof}

For the lower bound of $\dim_H\mathcal{U}_k(E)$ we  split the proof into the following two  subsections.
\[(i)~  f_1(I)\cap f_2(I)\ne \emptyset\textrm{ or }f_{m-1}(I)\cap f_m(I)\ne\emptyset;\quad (ii)  ~
f_1(I)\cap f_2(I)=f_{m-1}(I)\cap f_m(I)=\emptyset.\]

\bigskip

\subsection{ $f_1(I)\cap f_2(I)\ne \emptyset$ or $f_{m-1}(I)\cap f_m(I)\ne\emptyset$}
Let $\Phi=(E, \set{f_i}_{i=1}^m)\in\ee$ with the convex hull $I=conv(E)$. By Condition (C) there exists $i_0\in\set{1,\ldots,m-1}$ such  that
 $f_{i_0}(I)\cap f_{i_0+1}(I)=\emptyset$.
In the following lemma we  show that the Hausdorff dimension of $\mathcal{U}_1(\Phi)$ is dominated by the subset consisting of all $x\in\mathcal{U}_1(E)$ with its unique coding starting at digit $i_0$ or $i_0+1$.
\begin{lemma}
  \label{lem:27}
  Let $\Phi=(E,\{f_i\}_{i=1}^m)\in\mathcal{E}$ with $I=conv(E)$. If $f_{i_0}(I)\cap f_{i_0+1}(I)=\emptyset$ for some $i_0\in\{1,\cdots,m-1\}$, then
  { \[
  \dim_H\mathcal{U}_1(\Phi)= \dim_H (f_{i_0}(E)\cap \mathcal{U}_1(\Phi))=\dim_H (f_{i_0+1}(E)\cap \mathcal{U}_1(\Phi)).
  \]}
\end{lemma}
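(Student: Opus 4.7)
The plan is to follow the template of Lemmas~\ref{lem:23} and~\ref{lem:25}, but using a \emph{single} similitude on each side rather than a two-letter composition. Concretely, I will verify that the maps $\psi(x)=f_i(x)$ and $\psi'(x)=f_{i+1}(x)$ are well-defined in the following sense: $\psi$ sends $f_j(E)\cap\U_1(E)$ into $f_i(E)\cap\U_1(E)$ for every $j\in\set{3,\ldots,m}$, and $\psi'$ sends $f_j(E)\cap\U_1(E)$ into $f_{i+1}(E)\cap\U_1(E)$ for every $j\in\set{1,\ldots,m-2}$. Since each is a similitude of $\R$, it is automatically bi-Lipschitz on any subset, so only the well-definedness is at stake.

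The core step is the check for $\psi$. Fix $x\in f_j(E)\cap\U_1(E)$ with $j\ge 3$. The left-to-right ordering of the fundamental intervals (Conditions (A) and (B)) gives $f_1(I)\cap f_j(I)=\emptyset$, so $x\notin f_1(I)$. Now $\psi(x)=f_i(x)$ lies in $f_i(I)$; it cannot lie in the overlap $f_i(I)\cap f_{i+1}(I)$, which is empty by hypothesis. If the other neighbouring overlap $f_{i-1}(I)\cap f_i(I)$ is nonempty, Condition (D) writes it as $f_{i1^{v_1}}(I)$ for some $v_1\ge 1$, and $f_i(x)\in f_{i1^{v_1}}(I)$ would force $x\in f_{1^{v_1}}(I)\subseteq f_1(I)$, contradicting the previous step. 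Hence $\psi(x)$ lies in only the fundamental interval $f_i(I)$, so every coding of $\psi(x)$ begins with $i$; stripping this leading digit yields a coding of $x$, which is unique by hypothesis, so $\psi(x)\in f_i(E)\cap\U_1(E)$. The verification for $\psi'$ is the mirror image: for $j\le m-2$ one has $x\notin f_m(I)$, which prevents $f_{i+1}(x)$ from entering the right-hand overlap $f_{i+1}(I)\cap f_{i+2}(I)$.

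To assemble the conclusion, I use that $\U_1(E)=\bigcup_{j=1}^m(f_j(E)\cap\U_1(E))$ is a disjoint union, since a point in two distinct cells $f_j(E), f_k(E)$ would admit codings with different leading digits $j$ and $k$. Hence $\dim_H\U_1(E)=\max_{1\le j\le m}\dim_H f_j(E)\cap\U_1(E)$. Because $i\in\set{2,\ldots,m-2}$ forces $m\ge 4$, the ranges $\set{3,\ldots,m}$ and $\set{1,\ldots,m-2}$ together cover $\set{1,\ldots,m}$. The $\psi$-claim yields $\dim_H f_i(E)\cap\U_1(E)\ge\dim_H f_j(E)\cap\U_1(E)$ for every $j\ge 3$, and the $\psi'$-claim gives $\dim_H f_{i+1}(E)\cap\U_1(E)\ge\dim_H f_j(E)\cap\U_1(E)$ for every $j\le m-2$. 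Taking $j=i+1$ in the first and $j=i$ in the second yields the cross-inequalities establishing $\dim_H f_i(E)\cap\U_1(E)=\dim_H f_{i+1}(E)\cap\U_1(E)$; this common value dominates every $\dim_H f_j(E)\cap\U_1(E)$ and so equals $\dim_H\U_1(E)$.

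The main delicacy I anticipate is the bookkeeping of which neighbouring overlap can be ``activated'' by the position of $x$; this is kept under control by the observation (noted right after Condition (D)) that any three fundamental intervals have empty intersection, so only pairwise neighbour overlaps can create extra codings, and those are precisely the cases handled above.
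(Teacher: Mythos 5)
Your proof is correct and follows essentially the same strategy as the paper: use the similitudes $f_i$ and $f_{i+1}$ as bi-Lipschitz maps into $f_i(E)\cap\U_1(E)$ and $f_{i+1}(E)\cap\U_1(E)$, with well-definedness forced by Condition (D) and the fact that the source point avoids $f_1(I)$ (resp.\ $f_m(I)$), then conclude by finite stability of Hausdorff dimension. The only cosmetic difference is your choice of domains $j\ge 3$ and $j\le m-2$ in place of the paper's $j\ge i+1$ and $j\le i$; both choices cover $\set{1,\dots,m}$ and rest on the same contradiction.
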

\begin{proof}
  Note that $\mathcal{U}_1(\Phi)=\bigcup_{j=1}^m (f_j(E)\cap \mathcal{U}_1(\Phi))$. It suffices to prove
  \begin{equation}\label{eq:d1}
  \dim_H (f_{i_0}(E)\cap \mathcal{U}_1(\Phi))\ge\dim_H\left(\bigcup_{j=i_0+1}^m f_j(E)\cap \mathcal{U}_1(\Phi)\right)
  \end{equation}
  and
  \[
   \dim_H (f_{i_0+1}(E)\cap \mathcal{U}_1(\Phi))\ge\dim_H\left(\bigcup_{j=1}^{i_0} f_j(E)\cap \mathcal{U}_1(\Phi)\right).
  \]
 Without loss of generality we only prove (\ref{eq:d1}). Let
  \begin{align*}
    \varphi:\quad \bigcup_{j=i_0+1}^m f_j(E)\cap \mathcal{U}_1(\Phi)&\longrightarrow f_{i_0}(E)\cap \mathcal{U}_1(\Phi)\\
    x\quad\qquad& \mapsto\quad\quad f_{i_0}(x).
  \end{align*}
 First we prove that $\varphi$ is well-defined. Take $x\in\bigcup_{j=i_0+1}^m f_j(E)\cap \mathcal{U}_1(\Phi)$. It suffices to prove that $f_{i_0}(x)\in\mathcal{U}_1(\Phi)$. Suppose
 on the contrary that $f_{i_0}(x)\notin\mathcal{U}_1(\Phi)$. Note that $f_{i_0}(I)\cap f_{i_0+1}(I)=\emptyset$. Then by the locations of the fundamental intervals  it follows that
 \[
 f_{i_0}(x)\in f_{i_0-1}(I)\cap f_{i_0}(I) \subseteq f_{i_0 1}(I).
 \]
  This implies that $x\in f_1(I)$, leading to a contradiction with $f_1(I)\cap \bigcup_{j=i_0+1}^m f_j(I)=\emptyset$.

  Therefore, $\varphi$ is well-defined. Note that $\varphi$ is a similitude.   Hence, by \cite[Proposition 3.3]{Falconer_1990} it follows that
  \begin{align*}
    \dim_H \big(f_{i_0}(E)\cap \mathcal{U}_1(\Phi)\big)&\ge\dim_H\varphi\left(\bigcup_{j=i_0+1}^m f_j(E)\cap \mathcal{U}_1(\Phi)\right)\\
    & =\dim_H\left(\bigcup_{j=i_0+1}^m f_j(E)\cap \mathcal{U}_1(\Phi)\right).
  \end{align*}
  This establishes (\ref{eq:d1}).
\end{proof}

\begin{lemma}\label{lem:finite-k}
Let $\Phi=(E,\set{f_i}_{i=1}^m)\in\ee$ with $I=conv(E)$, and let $k\in\N$.
\begin{enumerate}[{\rm(i)}]
\item If $f_1(I)\cap f_2(I)=f_{1m^u}(I)=f_{21^v}(I)$ and $f_{i_0}(I)\cap f_{i_0+1}(I)=\emptyset$ for some $i_0\in\set{2,\ldots, m-1}$, then
\[
\pi(1m^{u(k-1)}\c)\in\uu_k(\Phi)
\]
for any $\pi(\c)\in f_{i_0}(E)\cap \uu_1(\Phi)$.

\item If $f_{m-1}(I)\cap f_m(I)=f_{(m-1)m^u}(I)=f_{m1^v}(I)$ and $f_{i_0}(I)\cap f_{i_0+1}(I)=\emptyset$ for some $i_0\in\set{1,\ldots, m-2}$, then
\[
\pi(m1^{v(k-1)}\c)\in\uu_k(\Phi)
\]
for any $\pi(\c)\in f_{i_0+1}(E)\cap \uu_1(\Phi)$.
\end{enumerate}
\end{lemma}
\begin{proof}
Since the proof of (ii) is similar, we only prove (i).

Take $\pi(\bm{c}) =\pi((c_i))\in f_{i_0}(E)\cap\mathcal{U}_1(\Phi)$. Then $c_1=i_0$. It suffices to prove  that for any integer $k\ge 0$,
\[
z_k:=\pi(1m^{uk}\bm{c})
\]
has exactly $k+1$ different codings. We will prove this by induction on $k$.
Suppose $k=0$. Then $z_0=\pi(1\bm{c})$. Note that $f_{i_0}(I)\cap f_{i_0+1}(I)=\emptyset$ for some $i_0\in\{2,\cdots,m-1\}$. Denote by $I=[a, b]$. Then by   Condition (A) it follows that
\begin{align*}
z_0&= \pi(1 i_0 c_2c_3\cdots)  \le f_{1i_0}(b)<f_{1m^u}(a)=f_{21^v}(a)=f_2(a),
 \end{align*}
 where the equality $f_{1m^u}(a)=f_{21^v}(a)$ follows by Lemma \ref{lem:21}.
 This together with $\pi(\bm{c})=\pi(i_0 c_2c_3\cdots)\in\mathcal{U}_1(\Phi)$ implies that $z_0\in\uu_1(\Phi)$.

Now suppose that $z_k$ has $k+1$ different codings for some $k\ge 0$. We will prove that $z_{k+1}$ has exactly $k+2$ different codings. Note   that
\begin{equation}\label{eq:d3}
\begin{split}
  z_{k+1}&=f_{1m^u}(\pi(m^{uk}\bm{c}))
  =f_{21^v}(\pi(m^{uk}\bm{c}))=f_{21^{v-1}}(z_k).
\end{split}
\end{equation}
By the induction hypothesis this implies that $z_{k+1}$ has at least $k+2$ different codings: one is $1m^{u(k+1)}\bm{c}$, and the others start at $21^{v-1}$.
In the following
we show   that $z_{k+1}$ has precisely   $k+2$ different codings. Suppose  $(d_i)$ is a coding of $z_{k+1}$. Then by (\ref{eq:d3}) and Lemma \ref{lem:key lem} it follows that
\[
d_1\cdots d_u=1m^{u-1}\quad\textrm{or}\quad d_1\cdots d_v=21^{v-1}.
\]
By the induction hypothesis  it suffices to prove that $d_1\cdots d_u=1m^{u-1}$ implies $(d_i)=1m^{u(k+1)}\bm{c}$.

 {Suppose $d_1\cdots d_u=1m^{u-1}$. We claim that $d_{u+1}\cdots d_{u(k+1)+1}=m^{uk+1}$. Let $1\le j\le uk+1$ be the smallest integer such that
  $d_{u+j}\ne m$. Then by
  (\ref{eq:d3}) we have
 \begin{equation}\label{eq:k6}
\pi( d_{u+j}d_{u+j+1}\cdots ) =f_{ m^{uk+2-j}}(\pi(\bm{c})).
\end{equation}
In view of the locations of the basic intervals we have $d_{u+j}=m-1$. Therefore, by (\ref{eq:k6}) and Condition (D) it follows  that
\[
f_{m^{uk+2-j}}(\pi(\bm{c}))\in f_{m-1}(I)\cap f_m(I) \subseteq f_{m1}(I).
\]
This implies that $f_{m^{uk+1-j}}(\pi(\bm{c}))\in f_1(I)$. If $j<uk+1$, then we obtain $f_m(I)\cap f_1(I)\ne\emptyset$, leading to a contradiction with  $f_{i_0}(I)\cap f_{i_0+1}(I)=\emptyset$. If $j=uk+1$, then  we get $\pi(\bm{c})\in f_1(I)$, leading to a contradiction with
  $\pi(\bm{c})=\pi(i_0 c_2c_3\cdots)\in\mathcal{U}_1(\Phi)$ and $i_0\ge 2$.  Thus,    $d_{u+1}\ldots d_{u(k+1)+1}=m^{uk+1}$. Since $\pi(\mathbf c)\in\uu_1(\Phi)$, we conclude that  $(d_i)=1m^{u(k+1)}\bm{c}$.}

Hence,   by induction it follows that $z_k$ has exactly $k+1$ different codings for any integer $k\ge 0$. This establishes (i).
\end{proof}

\begin{lemma}
  \label{lem:28}
  Let $\Phi=(E,\{f_i\}_{i=1}^m)\in\mathcal{E}$ with $I=conv(E)$. If $f_1(I)\cap f_2(I)\ne\emptyset$ or $f_{m-1}(I)\cap f_{m}(I)\ne\emptyset$, then
 \[
  \dim_H\mathcal{U}_k(E)\ge \dim_H\mathcal{U}_1(E)\quad\forall ~k\in\N.
  \]
\end{lemma}
\begin{proof}
Without loss of generality we assume $f_1(I)\cap f_2(I)\ne \emptyset$. By Condition (C) there exists $i_0\in\set{2,\ldots, m-1}$ such that
  $f_{i_0}(I)\cap f_{i_0+1}(I)=\emptyset$. Then by Lemma \ref{lem:27} it follows that
    \begin{equation}
    \label{eq:d2}
    \dim_H\mathcal{U}_1(\Phi)=\dim_H (f_{i_0}(E)\cap \mathcal{U}_1(\Phi)).
  \end{equation}
  Note that $f_1(I)\cap f_2(I)\ne\emptyset$. Then by Condition (D) there exist $u=u_1, v=v_1\in\N$ such that
$
f_1(I)\cap f_2(I)=f_{1m^u}(I)=f_{21^v}(I).
$
 So by Lemma \ref{lem:finite-k} (i) it follows that
\[
\{\pi(1m^{u(k-1)}\bm{c}): \pi(\bm{c})\in f_{i_0}(E)\cap\mathcal{U}_1(\Phi)\}\subseteq\mathcal{U}_{k}(\Phi).
\]
Hence, by (\ref{eq:d2}) we conclude that
\[\dim_H\mathcal{U}_{k}(\Phi)\ge \dim_H (f_{i_0}(E)\cap \mathcal{U}_1(\Phi))=\dim_H\mathcal{U}_1(\Phi). \]
\end{proof}
\subsection{$f_1(I)\cap f_2(I)=f_{m-1}(I)\cap f_m(I)=\emptyset$.} Let $\Phi=(E,\{f_i\}_{i=1}^m)\in\mathcal{E}$. Different from the previous subsection it happens in this case that $\uu_k(\Phi)=\emptyset$ for some $k\in\N$.
\begin{lemma}
  \label{lem:29}
  Let $\Phi=(E,\{f_i\}_{i=1}^m)\in\mathcal{E}$ with $I=conv(E)$. If
  $f_1(I)\cap f_2(I)=f_{m-1}(I)\cap f_m(I)=\emptyset,$
  then
  \[\mathcal{U}_k(\Phi)=\emptyset\quad \textrm{for any } k\ne 2^\ell\textrm{ with  }\ell\in \mathbb{N}\cup\set{0}.
  \]
\end{lemma}
\begin{proof}
For $x\in E$ we denote by $N(x)$ the number of different codings of $x$ with respect to $\{f_i\}_{i=1}^m$.
  Let $k\ge 2$ and take $x\in\mathcal{U}_k(\Phi)$.  Then $N(x)=k$.
  So, there exists
  \begin{equation}
\label{eq:r1}
x_1\in\mathcal{U}_k(\Phi)\cap f_{i}(I)\cap f_{i+1}(I)
\end{equation}
for some $i\in\set{2,\ldots, m-2}$
   such that  $N(x)=N(x_1)$.
   Note that $f_i({  I})\cap f_{i+1}({  I})\ne\emptyset$. Then by Condition (D) there exist $u=u_i, v=v_i\in\N$ such that
\begin{equation}\label{eq:r2}
f_{i}(I)\cap f_{i+1}(I)=f_{im^u}(I)=f_{(i+1)1^v}(I).
\end{equation}
Observe that $f_1(I)\cap f_2(I)= f_{m-1}(I)\cap f_m(I)=\emptyset$. Then in view of the locations of these basic intervals we obtain
\begin{equation}\label{eq:r7}
f_{1}(I)\cap f_i(I)=\emptyset~~ \forall i\ne 1\quad\textrm{and}\quad f_j(I)\cap f_m(I)=\emptyset~~\forall j\ne m.
\end{equation}

Therefore, by (\ref{eq:r1})--(\ref{eq:r7}) and a similar argument as in the proof of Lemma \ref{lem:key lem}  it follows that all codings of $x_1$ either begin with $im^{u}$ or begin with $(i+1)1^{v}$. By (\ref{eq:r2}) and Lemma \ref{lem:21} it gives that $f_{i m^u}(\cdot)=f_{(i+1)1^v}(\cdot)$. Then
there exists a unique
$y\in E$ such that
$
 x_1 =  f_{i  m^u}(y)=  f_{(i +1)1^v}(y).
$
Furthermore,
\[
N(x_1)=N(f_{m^u}(y))+N(f_{1^v}(y))=2N(y),
\]
where the last equality holds  by (\ref{eq:r7}) that
 \[N(f_{m^u}(y))=N(y)=N(f_{1^v}(y)).\]
Hence, we conclude that $N(x)=N(x_1)=2N(y)$.

  By iteration of the above arguments it follows that $N(x)$ must be of the form $2^\ell$ for some $\ell\in\N\cup\set{0}$. This completes the proof.
\end{proof}
In fact the condition $k\ne 2^{\ell}$ in the above lemma is also a necessary   condition for $\uu_k(\Phi)=\emptyset$.
\begin{lemma}
  \label{lem:210}
  Let $\Phi=(E,\{f_i\}_{i=1}^m)\in\mathcal{E}$ with $I=conv(E)$. If $f_1(I)\cap f_2(I)=f_{m-1}(I)\cap f_m(I)=\emptyset,$ then
  \[\dim_H\mathcal{U}_{2^\ell}(\Phi)\ge\dim_H\mathcal{U}_1(\Phi)\quad\forall \ell\in\N\cup\set{0}.\]
\end{lemma}
\begin{proof}
  We will prove the inequality $\dim_H\mathcal{U}_{2^\ell}(\Phi)\ge \dim_H\mathcal{U}_1(\Phi)$   by induction on $\ell$. Trivially this inequality holds  if $\ell=0$.
 Now we assume this inequality holds for some $\ell\ge 0$. It suffices to prove
  \[
  \dim_H\mathcal{U}_{2^{\ell+1}}(\Phi)\ge\dim_H\mathcal{U}_{2^\ell}(\Phi).
  \]
 By Condition (C) there exists $i\in\{2,\cdots,m-2\}$ such that  $f_{i}(I)\cap f_{i+1}(I)\ne\emptyset$. Then  Condition (D) gives two indexes  $u=u_i, v=v_i\in\N$ such that
  \[
  f_i(I)\cap f_{i+1}(I)=f_{im^u}(I)=f_{(i+1)1^v}(I).
  \]
  By Lemma \ref{lem:21} this yields that  $f_{im^u}(\cdot)=f_{(i+1)1^v}(\cdot)$. Note that any three basic intervals have an empty intersection. So, by (\ref{eq:r7}) it follows that
  \[
 \{ f_{im^u}(x)=f_{(i+1)1^v}(x): x\in\mathcal{U}_{2^\ell}(\Phi)\}\subseteq\mathcal{U}_{2^{\ell+1}}(\Phi),
  \]
which implies $\dim_H\mathcal{U}_{2^{\ell+1}}(\Phi)\ge \dim_H\mathcal{U}_{2^\ell}(\Phi)$.

  By induction we conclude that
$
  \dim_H\mathcal{U}_{2^\ell}(\Phi) \ge \dim_H\mathcal{U}_1(\Phi)
$ for any $\ell\in\N$.
\end{proof}

\begin{proof}[{Proof of  $(i) \Leftrightarrow (ii)$ in Theorems \ref{th:1}
(A) and (B)}] Let $\Phi=(E, \set{f_i}_{i=1}^m)\in\ee$ with $I=conv(E)$.
 By Lemmas \ref{lem:22} and \ref{lem:28} it follows that if $f_1(I)\cap f_2(I)\ne\emptyset$ or $f_{m-1}(I)\cap f_m(I)\ne\emptyset$, then
 \[
 \dim_H\mathcal{U}_k(\Phi)=\dim_H\mathcal{U}_1(\Phi)\quad\textrm{for all}\quad k\in\N.\]
 On the other hand, if $f_1(I)\cap f_2(I)=f_{m-1}(I)\cap f_m(I)=\emptyset$, then by Lemmas \ref{lem:22}, \ref{lem:29} and \ref{lem:210} it follows that
\[
\left\{\begin{array}{ll}
 \dim_H\mathcal{U}_k(\Phi)=\dim_H\mathcal{U}_1(\Phi)&\textrm{if}\quad k=2^\ell\textrm{ with }\ell\in\N\cup\set{0},\\
 \mathcal{U}_k(\Phi)=\emptyset&\textrm{otherwise}.
 \end{array}
 \right.
 \]
 Therefore, the equivalence (i) $\Leftrightarrow$ (ii) holds ture.
\end{proof}

\section{Countably many codings}\label{sec:countable codings}
Given $\Phi=(E, \set{f_i}_{i=1}^m)\in\ee$, we will consider in this section the set $\mathcal{U}_{\aleph_0}(\Phi)$ of points having countably many   codings, and prove the equivalences $(i)\Leftrightarrow(iii)\Leftrightarrow(iv)$ in Theorem \ref{th:1} (A) and (B), respectively. The equivalence $(i)\Leftrightarrow(iii)$ follows from the following result.
\begin{proposition}
 \label{lem:31}
 Let $\Phi=(E,\{f_i\}_{i=1}^m)\in\mathcal{E}$ with $I=conv(E)=[a, b]$.
 \begin{itemize}
 \item [{\rm(i)}]  $f_1(I)\cap f_2(I)\ne\emptyset$ if and only if $f_1(b)\in\mathcal{U}_{\aleph_0}(\Phi)$.
 \item [{\rm(ii)}] $f_{m-1}(I)\cap f_m(I)\ne\emptyset$ if and only if $f_m(a)\in\mathcal{U}_{\aleph_0}(\Phi)$.
 \end{itemize}
\end{proposition}
\begin{proof}
Since the proofs of (i) and (ii) are similar, we only prove (i).

Suppose   $f_1(I)\cap f_2(I)=\emptyset$. Then in view of  the locations of these basic intervals we have
  \begin{equation}\label{eq:sep-countable-1}
  f_1(I)\cap f_i(I)=\emptyset \quad\textrm{for any}\quad i\in\set{2,\ldots, m}.
  \end{equation}
 Note that $b=\pi(m^{\infty})\in\mathcal{U}_1(\Phi)$. Then by (\ref{eq:sep-countable-1}) it follows that  $f_1(b)\in\mathcal{U}_1(\Phi)$. This proves the sufficiency in (i).

For  the necessity in (i) we assume   $f_1(I)\cap f_2(I)\ne\emptyset$. Then by Condition (D) there exist $u=u_1, v=v_1\in\N$ such that
 $
 f_1(I)\cap f_2(I)=f_{1m^u}(I)=f_{21^v}(I),
 $
which  gives $f_{1m^u}(\cdot)=f_{21^v}(\cdot)$ by Lemma \ref{lem:21}.
Then
 \begin{equation}\label{eq:r3}
 \begin{split}
 f_1(b)&=\pi(1m^{\infty})=\pi(21^{v-1}1m^{\infty}) =\cdots=\pi((21^{v-1})^k1 m^{\infty})=\cdots
 \end{split}
 \end{equation}
for $k=1,2,\ldots$.
 This implies that $f_1(b)$ has at least countably  many codings.

In the following we show that $f_1(b)$ indeed has countably  many codings.
Suppose that $(d_i)$ is a coding of $f_1(b)$. Note that $f_1(b)\in f_1(I)\cap f_2(I)=f_{1m^u}(I)=f_{21^v}(I)$. Then by  Lemma \ref{lem:key lem} it follows  that $d_1\cdots d_u=1m^{u-1}$ or $d_1\cdots d_v=21^{v-1}$.

\begin{itemize}
  \item If $d_1\cdots d_u=1m^{u-1}$, then by (\ref{eq:r3}) we have
  \[
  \pi(d_{u+1}d_{u+2}\cdots)=\pi(m^{\infty})\in\mathcal{U}_1(\Phi).
  \]
  This implies that $(d_i)=1m^{\infty}$.

  \item If $d_1\cdots d_v=21^{v-1}$, then by (\ref{eq:r3}) it yields that
  \begin{equation*}
  \pi(d_{v+1}d_{v+2}\cdots)=\pi(1m^{\infty})=f_1(b).
  \end{equation*}
\end{itemize}

By iteration of the above arguments it follows that all  codings of $f_1(b)$ are of the form
 \[
 (21^{v-1})^k 1m^{\infty},\quad k\ge 0.
 \]
 Hence, $f_1(b)\in\mathcal{U}_{\aleph_0}(\Phi)$. This proves the necessity in (i).
\end{proof}
\begin{proof}
  [{Proof of (i) $\Leftrightarrow$ (iii) in Theorem \ref{th:1} (A) and (B)}] The equivalence  (i) $\Leftrightarrow$ (iii) in Theorem \ref{th:1} (A) and (B) follows directly from Proposition \ref{lem:31}.
\end{proof}
In the following it remains to prove the equivalence (i) $\Leftrightarrow$ (iv) in Theorem \ref{th:1} (A) and (B).
\begin{lemma}
  \label{lem:32}
  Let $\Phi=(E,\{f_i\}_{i=1}^m)\in\mathcal{E}$ with $I=conv(E)$. If
  $f_1(I)\cap f_2(I)=f_{m-1}(I)\cap f_m(I)=\emptyset,$
  then $\mathcal{U}_{\aleph_0}(\Phi)=\emptyset$.
\end{lemma}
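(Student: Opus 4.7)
The plan is to argue by contradiction: assume there exists $x_0\in\U_{\aleph_0}(E)$, and construct a bijection between the codings of $x_0$ and $\set{0,1}^\f$, contradicting $N(x_0)=\aleph_0$.

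First I would replay the construction from the proof of Lemma \ref{lem:29}. Since $N(x_0)\ge 2$, the point $x_0$ lies in some overlap $f_{i_0}(I)\cap f_{i_0+1}(I)$, and by the hypothesis $f_1(I)\cap f_2(I)=f_{m-1}(I)\cap f_m(I)=\emptyset$ one has $i_0\in\set{2,\cdots,m-2}$. Condition (D) together with Lemma \ref{lem:21} then furnishes $u_0,v_0\ge 1$ and a unique $x_1\in E$ with
\[
x_0=f_{i_0 m^{u_0}}(x_1)=f_{(i_0+1)1^{v_0}}(x_1),
\]
and the proof of Lemma \ref{lem:29} (using the emptiness of the boundary overlaps) shows that every coding of $x_0$ has the form $i_0 m^{u_0}\c$ or $(i_0+1)1^{v_0}\c$ for some coding $\c$ of $x_1$. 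Hence $N(x_0)=2N(x_1)$, and since $N(x_0)=\aleph_0$, cardinal arithmetic forces $N(x_1)=\aleph_0$, i.e., $x_1\in\U_{\aleph_0}(E)$.

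Iterating yields sequences $(x_k)_{k\ge 0}\subseteq\U_{\aleph_0}(E)$ and indices $(i_k,u_k,v_k)$ with $x_k=f_{i_k m^{u_k}}(x_{k+1})=f_{(i_k+1)1^{v_k}}(x_{k+1})$, such that every coding of $x_k$ factors uniquely as either the block $i_k m^{u_k}$ or the block $(i_k+1)1^{v_k}$, followed by a coding of $x_{k+1}$. For each binary sequence $(c_k)\in\set{0,1}^\f$ I then construct a coding of $x_0$ by concatenating, at step $k$, the block $i_k m^{u_k}$ when $c_k=0$ and $(i_k+1)1^{v_k}$ when $c_k=1$; the partial compositions converge to $x_0$ by the uniform contraction of the $f_j$'s. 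Distinct binary sequences yield distinct codings (they first disagree immediately after the common initial blocks), producing $2^{\aleph_0}$ codings of $x_0$ and contradicting $N(x_0)=\aleph_0$.

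The main obstacle will be rigorously justifying the rigid block decomposition at each level. Lemma \ref{lem:key lem} only guarantees that every coding of $x_k$ starts with $i_k m^{u_k-1}$ or $(i_k+1)1^{v_k-1}$, but the bijection with $\set{0,1}^\f$ requires the full blocks of lengths $u_k+1$ and $v_k+1$. This strengthening uses the hypothesis $f_1(I)\cap f_2(I)=f_{m-1}(I)\cap f_m(I)=\emptyset$, which via the fundamental-interval layout gives $f_j(I)\cap f_m(I)=\emptyset$ for every $j\ne m$ and $f_1(I)\cap f_j(I)=\emptyset$ for every $j\ne 1$, forcing the digit immediately after each prefix to be uniquely $m$ or $1$. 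Once this is in place, the rest is bookkeeping.
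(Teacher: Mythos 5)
Your proposal is correct and follows essentially the same route as the paper: locate an overlap point, use Condition (D), Lemma \ref{lem:21} and the hypothesis $f_1(I)\cap f_2(I)=f_{m-1}(I)\cap f_m(I)=\emptyset$ to force the rigid block alternatives $i m^{u}$ versus $(i+1)1^{v}$, iterate to get infinitely many independent substitution sites, and conclude that the point would have $2^{\aleph_0}$ codings. The only difference is presentational: the paper fixes one coding that visits the overlap region infinitely often and performs substitutions in it, whereas you drive the recursion through the cardinal identity $N(x_k)=2N(x_{k+1})$ and then write out the injection from $\set{0,1}^\f$ explicitly, which if anything makes the final ``infinitely many substitutions imply a continuum of codings'' step more transparent.
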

\begin{proof}
  Suppose on the contrary that $\mathcal{U}_{\aleph_0}(\Phi)\ne\emptyset$, and take $x\in\mathcal{U}_{\aleph_0}(\Phi)$. Then $x$ must have a coding $(d_i)$ satisfying
  \begin{equation}
    \label{eq:c1}
    x_n:=\pi(d_{n+1}d_{n+2}\cdots)\in E\cap\bigcup_{i=2}^{m-2} \big(f_i(I)\cap f_{i+1}(I)\big)
  \end{equation}
  for infinitely many $n\ge 1$.

  Take $n$ satisfying (\ref{eq:c1}) and assume that
  $
 x_n\in E\cap f_{i}(I)\cap f_{i+1}(I)
  $
for some $2\le i\le m-2$. By Condition (D) there exist $u=u_i, v=v_i\in\N$ such that
\begin{equation}\label{eq:c2}
\begin{split}
x_n=\pi(d_{n+1}d_{n+2}\ldots)&\in f_i(I)\cap f_{i+1}(I)=f_{im^u}(I)=f_{(i+1)1^v}(I).
\end{split}
\end{equation}
Note that $f_1(I)\cap f_2(I)=f_{m-1}(I)\cap f_m(I)=\emptyset$. Then in view of    the locations of the basic intervals it follows that
\begin{equation}
  \label{eq:c3}
  f_1(I)\cap f_j(I)=f_\ell(I)\cap f_m(I)=\emptyset
\end{equation}
for any $j\ne 1$ and any $\ell\ne m$. Therefore, by (\ref{eq:c2}) and (\ref{eq:c3}) it follows that
\[
d_{n+1}\cdots d_{n+u+1}=im^u \quad\textrm{or}\quad d_{n+1}\cdots d_{n+v+1}=(i+1)1^v.
\]
Note by   (\ref{eq:c2}) and Lemma \ref{lem:21} that $f_{im^u}(\cdot)=f_{(i+1)1^v}(\cdot)$. Therefore, we have a substitution $im^u\sim (i+1)1^v$ in $d_{n+1}d_{n+2}\cdots$. In other words, there exists a unique $y\in E$ such that $x_n=f_{im^u}(y)=f_{(i+1)1^v}(y)$.

Note that  (\ref{eq:c1})  holds for infinitely many $n\in\N$. Then by a similar argument as above    it follows that there exist infinitely many independent substitutions  in $(d_i)$.  This implies that $x$ has a continuum of codings, leading to a contradiction with $x\in\mathcal{U}_{\aleph_0}(\Phi)$.
\end{proof}

\begin{lemma}
  \label{lem:33}
  Let $\Phi=(E,\{f_i\}_{i=1}^m)\in\mathcal{E}$ with $I=conv(E)$.
  \begin{enumerate}[{\rm(i)}]
  \item If  $f_1(I)\cap f_2(I)=f_{1m^u}(I)=f_{21^v}(I)$ and $f_{m-1}(I)\cap f_m(I)=\emptyset$, then $|\mathcal{U}_{\aleph_0}(\Phi)|=\aleph_0$. Furthermore, any  point in $\uu_{\aleph_0}(\Phi)$   must have a coding ending with $1m^\f\sim (21^{v-1})^\f$.

  \item If  $f_1(I)\cap f_2(I)=\emptyset$ and  $f_{m-1}(I)\cap f_m(I)=f_{(m-1)m^p}(I)=f_{m1^q}(I)$, then $|\mathcal{U}_{\aleph_0}(\Phi)|=\aleph_0$. Furthermore, any  point in $\uu_{\aleph_0}(\Phi)$   must have a coding ending with $m1^\f\sim ((m-1)m^{p-1})^\f$.

  \item If $f_1(I)\cap f_2(I)=f_{1m^{u}}(I)=f_{21^{v}}(I)$ and $f_{m-1}(I)\cap f_m(I)=f_{(m-1)m^{p}}(I)=f_{m1^{q}}(I)$, then $|\mathcal{U}_{\aleph_0}(\Phi)|=\aleph_0$. Furthermore, any  point in $\uu_{\aleph_0}(\Phi)$   must have a coding ending with $1m^\f\sim (21^{v-1})^\f$ or $m1^\f\sim ((m-1)m^{p-1})^\f$.
  \end{enumerate}
\end{lemma}
\begin{proof}
Since the proofs of (ii) and (iii) are similar, we only prove (i).  Suppose  $f_1(I)\cap f_2(I)\ne\emptyset$ but $f_{m-1}(I)\cap f_m(I)=\emptyset$. By Condition (D) there exist $u=u_1, v=v_1\in\N$ such that
  \begin{equation}
    \label{eq:b1}
    f_1(I)\cap f_2(I)=f_{1m^{u}}(I)=f_{21^{v}}(I),
  \end{equation}
which yields   $f_{1m^u}(\cdot)=f_{21^v}(\cdot)$  by Lemma \ref{lem:21}.

Denote by $I=[a, b]$ the convex hull of $E$. First we claim  $f_{1^n}(b)\in\mathcal{U}_{\aleph_0}(\Phi)$ for any $n\ge 1$. We  will prove this by induction on $n$. Clearly, for $n=1$ we have by Lemma \ref{lem:31} that $f_1(b)\in\mathcal{U}_{\aleph_0}(\Phi)$.
Now suppose   $f_{1^n}(b)\in\mathcal{U}_{\aleph_0}(\Phi)$ for some $n\ge 1$, and we consider $f_{1^{n+1}}(b)$.    If $f_{1^{n+1}}(b)\in f_2(I)$,  then by Condition (D) it follows that
\[
f_{1^{n+1}}(b)\in f_1(I)\cap f_2(I)\subseteq f_{1m}(I),
\]
which implies $f_{1^n}({  b})\in f_m(I)$, leading to a contradiction with $f_1(I)\cap f_m(I)=\emptyset$. By the induction hypothesis this proves $f_{1^{n+1}}(b)\in\uu_{\aleph_0}(\Phi)$. Hence,
by induction it follows that  $\{f_{1^n}(b): n\ge 1\}\subseteq\mathcal{U}_{\aleph_0}(\Phi)$.

In the following it suffices to prove that any  point in $\uu_{\aleph_0}(\Phi)$   must have a coding ending with $1m^{\infty}\sim (21^{v-1})^{\infty}$.
Take $x\in\mathcal{U}_{\aleph_0}(\Phi)$. Suppose on the contrary that   $x$ has a coding $(d_i)$  ending with  neither $1m^{\infty}$ nor $ (21^{v-1})^{\infty}$. Observe that  $(d_i)$ satisfies
  \begin{equation}\label{eq:b2}
 \pi(d_{n+1}d_{n+2}\cdots) \in E\cap \bigcup_{i=1}^{m-2}\big(f_i(I)\cap f_{i+1}(I)\big)
  \end{equation}
  for infinitely many $n\ge 1$.

Take $n$ satisfying (\ref{eq:b2}), and assume   $\pi(d_{n+1}d_{n+2}\cdots)\in  f_i(I)\cap f_{i+1}(I)$ for some $i\in\{1,\cdots,m-2\}$. Then by Condition (D) there exist $p=u_i, q=v_i\in\N$ such that
  \begin{equation}\label{eq:a3}
  \pi(d_{n+1}d_{n+2}\cdots)\in f_i(I)\cap f_{i+1}(I)=f_{im^p}(I)=f_{(i+1)1^q}(I).
  \end{equation}
  By Lemma  \ref{lem:21}  it follows that $f_{im^p}(\cdot)=f_{(i+1)1^q}(\cdot)$, and then by Lemma  \ref{lem:key lem} we have
   \[d_{n+1}\cdots d_{n+p}=im^{p-1}\quad\textrm{ or}\quad d_{n+1}\cdots d_{n+q}=(i+1)1^{q-1}.\]
 Now we split the proof into the following two cases.

  Case (I). $d_{n+1}\cdots d_{n+p}=im^{p-1}$. Then by (\ref{eq:a3}) and using $f_{m-1}(I)\cap f_m(I)=\emptyset$ it follows that $d_{n+p+1}=m$.
Therefore,  we have a substitution    $d_{n+1}\cdots d_{n+p+1}=im^{p}\sim (i+1)1^{q}$.

  Case (II). $d_{n+1}\cdots d_{n+q}=(i+1)1^{q-1}$. Then by (\ref{eq:a3}) it follows that
 $d_{n+q+1}=1$ or $2$.
 \begin{itemize}
 \item
  If $d_{n+q+1}=1$, then we have a substitution $d_{n+1}\cdots d_{n+q+1}=(i+1)1^q\sim im^{p}.$

  \item If $d_{n+q+1}=2$, then by (\ref{eq:b1}) and (\ref{eq:a3}) it yields that
  \[\pi(d_{n+q+1}d_{n+q+2}\cdots)\in f_1(I)\cap f_2(I)=f_{1m^u}(I)=f_{{  2}1^v}(I).\]
  So, by Lemma \ref{lem:key lem} it follows that
  $
  d_{n+q+1}\cdots d_{n+q+v}=21^{v-1}
  $ and $d_{n+q+v+1}=1$ or $2$.
   Note by the assumption  that $(d_i)$ does not end with $(21^{v-1})^{\infty}$. Then by iteration it follows that   there exists $N\ge n+q$ such that
\[
  d_{N+1}\cdots d_{N+v+1}=21^{v}.
\]
\end{itemize}
Again, we  have a substitution  $21^v\sim 1m^{u}$.

  By Cases (I)--(II) and (\ref{eq:b2}) it follows that there exist  infinitely many independent substitutions in $(d_i)$. This implies that $x$ has a continuum of codings, leading to a contradiction with $x\in\mathcal{U}_{\aleph_0}(\Phi)$.  We complete the proof of (i).
  \end{proof}

  By the proof of Lemma \ref{lem:33} it follows that if $f_1(I)\cap f_2(I)\ne\emptyset$, then   any $x\in\uu_{\aleph_0}(\Phi)$ must have a coding  ending  with $1m^\f$, which is a  coding of $f_1(b)$. Similarly, if $f_{m-1}(I)\cap f_m(I)\ne\emptyset$, then  any $x\in\uu_{\aleph_0}(\Phi)$ must have a  coding ending with $m 1^\f$, which is a coding of $f_m(a)$.
As a corollary of Lemma \ref{lem:33} we have a complete characterization of $\uu_{\aleph_0}(\Phi)$.
\begin{proposition}\label{cor:countable}
Let $\Phi=(E, \set{f_i}_{i=1}^m)\in\ee$ with  $I=conv(E)$. Then
\[
\uu_{\aleph_0}(\Phi)=\left\{\begin{array}{lll}
\bigcup_{\mathbf i\in\set{1,\ldots, m}^*}\set{f_{\mathbf i}(f_1(b))}&\textrm{if}& f_1(I)\cap f_2(I)\ne\emptyset,~ f_{m-1}(I)\cap f_m(I)=\emptyset;\\
\bigcup_{\mathbf i\in\set{1,\ldots, m}^*}\set{f_{\mathbf i}(f_m(a))}&\textrm{if}& f_1(I)\cap f_2(I)=\emptyset,~ f_{m-1}(I)\cap f_m(I)\ne\emptyset;\\
\bigcup_{\mathbf i\in\set{1,\ldots,m}^*}\set{f_{\mathbf i}(f_1(b)), f_{\mathbf i}(f_m(a))}&\textrm{if}& f_1(I)\cap f_2(I)\ne\emptyset,~ f_{m-1}(I)\cap f_m(I)\ne\emptyset.
\end{array}\right.
\]
\end{proposition}

\begin{proof}
  [{Proof of (i) $\Leftrightarrow$ (iv) in Theorem \ref{th:1} (A) and (B)}]
  The equivalence  (i) $\Leftrightarrow$ (iv) follows directly from Lemmas \ref{lem:32} and \ref{lem:33}.
  \end{proof}

\section{Topology of $\mathcal{U}_k(\Phi)$}\label{sec:closeness}
In this section we investigate the topology of   $\mathcal{U}_k(\Phi)$, and prove the equivalence (i) $\Leftrightarrow$ (v) in Theorem \ref{th:1} (A) and (B) respectively.  First  we prove the following useful lemma.

\begin{lemma}\label{lem:colse-1}
Let $\Phi=(E, \set{f_i}_{i=1}^m)\in\ee$ with $I=conv(E)$.
\begin{enumerate}[{\rm(i)}]
\item If $f_1(I)\cap f_2(I)=f_{1m^u}(I)=f_{21^v}(I)$ and $f_{i_0}(I)\cap f_{i_0+1}(I)=\emptyset$ for some $i_0\in\set{2,\ldots, m-1}$, then for any integer $\ell\ge 0$
\[
\pi(i_0m(21^{v-1})^\ell\mathbf c)\in\uu_1(\Phi),
\]
where  $\pi(\mathbf c)\in f_{i_0+1}(E)\cap \uu_1(\Phi)$.

\item  If $f_{m-1}(I)\cap f_m(I)=f_{(m-1)m^u}(I)=f_{m1^v}(I)$ and $f_{i_0}(I)\cap f_{i_0+1}(I)=\emptyset$ for some $i_0\in\set{1,\ldots, m-2}$, then for any integer $\ell\ge 0$
\[
\pi((i_0+1) 1((m-1)m^{u-1})^\ell\mathbf c)\in\uu_1(\Phi),
\]
where  $\pi(\mathbf c)\in f_{i_0}(E)\cap \uu_1(\Phi)$.
\end{enumerate}
\end{lemma}
\begin{proof}
Since the proof of (ii) is similar, we only prove (i).

Let $x_\ell:=\pi(i_0m(21^{v-1})^\ell \c)$. We will prove $x_\ell\in\uu_1(\Phi)$ by induction on $\ell$. First we prove $x_0=\pi(i_0m\c)\in\uu_1(\Phi)$. Note that $f_{i_0}(I)\cap f_{i_0+1}(I)=\emptyset$. Then in view of the location of the basic intervals it follows that
\begin{equation}\label{eq:location-1}
f_{i}(I)\cap f_{j}(I)=\emptyset\quad\forall~ i\le i_0\textrm{ and } j\ge i_0+1.
\end{equation}
Let $(d_i)$ be a coding of $x_0$.  We will prove $(d_i)=i_0m\c$, where $\c\in\pi^{-1}(f_{i_0+1}(E)\cap\uu_1(\Phi))$ has a prefix $i_0+1$. Since $\pi(\c)\in\uu_1(\Phi)$, it suffices to prove $d_1=i_0$ and $d_2=m$.

If $d_1\ne i_0$, then by (\ref{eq:location-1}) we must have $d_1=i_0-1$. By Condition (D) this implies
\[
\pi(i_0m\c)\in f_{i_0-1}(I)\cap f_{i_0}(I)\subset f_{i_01}(I),
\]
which gives $\pi(m\c)\in f_1(I)\cap f_m(I)$, leading to a contradiction with (\ref{eq:location-1}).
To determine $d_2$ suppose on the contrary  $d_2\ne m$. Then $d_2$ must be $m-1$. By Condition (D) it follows that
\[
\pi(m\c)\in f_{m-1}(I)\cap f_m(I)\subset f_{m 1}(I).
\]
This together with $\pi(\c)\in f_{i_0+1}(E)$ yields $\pi(\c)\in f_{1}(I)\cap f_{i_0+1}(I)$, again leading to a contradiction with (\ref{eq:location-1}). Therefore, $(d_i)=i_0m\c$ is the unique coding of $x_0$, and then $x_0\in\uu_1(\Phi)$.

We proceed with the induction hypothesis that $x_\ell=\pi(i_0 m(21^{v-1})^\ell\mathbf c)\in\uu_1(\Phi)$ for some $\ell\ge 0$, and we consider $x_{\ell+1}$. By the induction hypothesis it follows that
\begin{equation}\label{eq:close-2}
\pi((21^{v-1})^\ell \c)\in\uu_1(\Phi).
\end{equation}
Let $(d_i)$ be a coding of $x_{\ell+1}$. By (\ref{eq:close-2}) it suffices to prove
$d_1\ldots d_{v+2}=i_0m 21^{v-1}.$
 By the same argument as in the proof of $x_0\in\uu_1(\Phi)$ it follows that $d_1=i_0$. To determine the second digit $d_2$ we assume on the contrary that $d_2\ne m$. Then $d_2=m-1$, which implies
$
\pi(m(21^{v-1})^{\ell+1}\c)\in f_{m-1}(I)\cap f_m(I)\subset f_{m1}(I).
$
So
\begin{equation}\label{eq:may-29}
\pi((21^{v-1})^{\ell+1}\c)\in f_1(I)\cap f_2(I)=f_{21^v}(I).
\end{equation}
If $\ell\ge 1$, then (\ref{eq:may-29}) implies
\[
\pi((21^{v-1})^\ell\c)\in f_1(I)\cap f_2(I),
\]
leading to a contradiction with (\ref{eq:close-2}). If $\ell=0$, then (\ref{eq:may-29}) implies $\pi(\mathbf c)\in f_1(I)\cap f_{i_0+1}(I)$, leading to a contradiction with (\ref{eq:location-1}). Therefore, $d_2=m$.

To prove $d_3=2$ we consider the following two cases.
\begin{itemize}
\item $d_3=1$. Then $\pi((21^{v-1})^{\ell+1}\c)\in f_1(I)\cap f_2(I)=f_{21^v}(I)$. By the above arguments this will lead to a contradiction.

\item $d_3=3$. Then $\pi((21^{v-1})^{\ell+1}\c)\in f_2(I)\cap f_3(I)\subset f_{2m}(I)$, which gives $\pi(1^{v-1}(21^{v-1})^\ell\c)\in f_m(I)$. Again this leads to a contradiction with (\ref{eq:location-1}).
\end{itemize}
Therefore, $d_3=2$. If $v=1$, then by the induction hypothesis we are done.

If $v>1$, then we claim $d_4\ldots d_{v+2}=1^{v-1}$.
Suppose $d_4\ne 1$. Then $\pi(1^{v-1}(21^{v-1})^\ell\c)\in f_1(I)\cap f_2(I)\subset f_{1m}(I)$, which leads to a contradiction with (\ref{eq:location-1}). By iteration of this arguments we can deduce that $d_4=\cdots= d_{v+2}=1$.

Hence, $d_1\ldots d_{v+2}=i_0m21^{v-1}$. By the induction hypothesis we conclude that $(d_i)=im(21^{v-1})^{\ell+1}\c$, and thus $x_{\ell+1}\in\uu_1(\Phi)$,  completing  the proof.
\end{proof}

\begin{proposition}\label{prop:notclose}
Let $\Phi=(E,\{f_i\}_{i=1}^m)\in\mathcal{E}$ with $I=conv(E)$.
If $f_1(I)\cap f_2(I)\neq \emptyset$ or $f_{m-1}(I)\cap f_m(I)\ne \emptyset$,
then $\uu_k(\Phi)$ is not closed for any $k\in\N$.
\end{proposition}
\begin{proof}
Take $k\in\N$. We will prove that $\uu_k(\Phi)$ is not closed by considering  the following two cases.

(I). $f_1(I)\cap f_2(I)\ne \emptyset$. By Condition (D) there exist $u=u_1, v=v_1\in\N$ such that
$f_1(I)\cap f_2(I)=f_{1m^u}(I)=f_{21^v}(I)$. Furthermore, by Condition (C) there exists $i_0\in\set{2,\ldots, m-1}$ such that $f_{i_0}(I)\cap f_{i_0+1}(I)=\emptyset$. We claim that for any $\ell\in\N$
\[
x_{k,\ell}:=\pi(1m^{u(k-1)}\;i_0m(21^{v-1})^\ell\c)\in\uu_k(\Phi),
\]
where $\pi(\c)\in f_{i_0+1}(E)\cap \uu_1(\Phi)$. By Lemma \ref{lem:colse-1} (i) it follows that $\pi(i_0m(21^{v-1})^\ell\c)\in f_{i_0}(E)\cap \uu_1(\Phi)$. Then the claim follows by Lemma \ref{lem:finite-k} (i).

Observe that $x_{k,\ell}\in\uu_k(\Phi)$ converges to
\[
x_k=\lim_{\ell\to\f}z_{k,\ell}=\pi(1m^{u(k-1)}i_0m(21^{v-1})^\f),
\]
which has a coding ending with $(21^{v-1})^\f$. So by Proposition \ref{cor:countable} it follows that $x_k\in\uu_{\aleph_0}(\Phi)$. This implies that $\uu_k(\Phi)$ is not closed.

(II). $f_{m-1}(I)\cap f_m(I)\ne\emptyset$. Then there exist $u=u_{m-1}, v=v_{m-1}\in\N$ such that $f_{m-1}(I)\cap f_m(I)=f_{(m-1)m^u}(I)=f_{m1^v}(I)$. Furthermore, by Condition (C) there exists $i_0\in\set{1,\ldots, m-2}$ such that $f_{i_0}(I)\cap f_{i_0+1}(I)=\emptyset$. By Lemmas \ref{lem:finite-k} (ii) and \ref{lem:colse-1} (ii) it follows that
\[
y_{k,\ell}:=\pi(m 1^{v(k-1)}\;(i_0+1)1((m-1)m^{u-1})^\ell\c)\in\uu_k(\Phi),
\]
where $\pi(\c)\in f_{i_0}(E)\cap \uu_1(\Phi)$. However, the limit
\[
y_k=\lim_{\ell\to\f}y_{k,\ell}=\pi(m 1^{v(k-1)}\;(i_0+1)1((m-1)m^{u-1})^\f)
\]
has a coding ending with $((m-1)m^{u-1})^\f \sim m1^\f$. So by Proposition \ref{cor:countable} it follows that $y_k\in\uu_{\aleph_0}(\Phi)$. Therefore, $\uu_k(\Phi)$ is not closed.
\end{proof}

Now we consider the closeness of $\uu_k(\Phi)$ by assuming $f_1(I)\cap f_2(I)=f_{m-1}(I)\cap f_m(I)=\emptyset$. Note by Lemma \ref{lem:29} that $\uu_k(\Phi)=\emptyset$ if $k\ne 2^\ell$ with $\ell\in\N\cup\set{0}$.
\begin{lemma}\label{lem:close-2}
Let $\Phi=(E, \set{f_i}_{i=1}^m)\in\ee$ with $I=conv(E)$. If $f_1(I)\cap f_2(I)=f_{m-1}(I)\cap f_m(I)=\emptyset$ and $f_i(I)\cap f_{i+1}(I)=f_{im^u}(I)=f_{(i+1)1^v}(I)$ for some $i\in\set{2,\ldots, m-2}$, then
\[
\pi(1^n (im^u)^\ell 1^\f)\in\uu_{2^\ell}(\Phi)
\]
for any integers $n, \ell\ge 0$.
\end{lemma}
\begin{proof}
Since $f_1(I)\cap f_2(I)=\emptyset$, it suffices to prove
\[
x_\ell:=\pi(1(im^u)^\ell 1^\f)\in\uu_{2^\ell}(\Phi).
\]
We prove this by induction on $\ell$. Clearly, for $\ell=0$ we have $x_0=\pi(1^\f)=a\in\uu_1(\Phi)$. Now suppose $x_\ell\in\uu_{2^\ell}(\Phi)$ for some $\ell\ge 0$, and we will prove $x_{\ell+1}\in\uu_{2^{\ell+1}}(\Phi)$.

Since $f_1(I)\cap f_2(I)=\emptyset$, by using $\pi(1(im^u)^\ell 1^\f)=x_\ell\in\uu_{2^\ell}(\Phi)$ it follows that $\pi((im^u)^\ell 1^\f)\in\uu_{2^\ell}(\Phi)$, and  then by using $f_{m-1}(I)\cap f_m(I)=\emptyset$ we have
\begin{equation}\label{eq:close-3}
  \pi(m(im^u)^\ell 1^\f)\in\uu_{2^\ell}(\Phi).
\end{equation}
Let $(d_i)$ be a coding of $x_{\ell+1}=\pi(1(im^u)^{\ell+1} 1^\f)$. Since $f_1(I)\cap f_2(I)=\emptyset$, it follows that $d_1=1$. For the second digit $d_2$ we claim that $d_2\in\set{i, i+1}$.

If $d_2\notin\set{i, i+1}$, then by using   $\pi(d_2d_3\ldots)\in f_i(I)$ we must have $d_2=i-1$. So $\pi((im^u)^{\ell+1} 1^\f)\in f_{i-1}(I)\cap f_i(I)\subset f_{i1}(I)$, which implies
\[
\pi(m^u (im^u)^{\ell} 1^\f)\in f_1(I),
\]
leading to a contradiction with $f_1(I)\cap f_m(I)=\emptyset$.
Therefore,  $d_2=i$ or $d_2=i+1$.

By the claim it follows that
\[
\pi(d_2d_3\ldots)=\pi((im^u)^{\ell+1}1^\f)\in f_i(I)\cap f_{i+1}(I)=f_{im^u}(I)=f_{(i+1)1^v}(I).\]
This together with Lemma \ref{lem:key lem} implies that either $d_2\ldots d_{u+1}=im^{u-1}$ or $d_2\ldots d_{v+1}=(i+1)1^{v-1}$.
Observe that
\[\pi(im^{u-1}\;m(im^u)^\ell 1^\f)=\pi((i+1)1^{v-1}\;1(im^u)^\ell 1^\f),\]
where we have used $f_{im^u}=f_{(i+1)1^v}$ by Lemma \ref{lem:21}. Then by (\ref{eq:close-3}) and the induction hypothesis $\pi(1(im^u)^\ell 1^\f)=x_\ell\in\uu_{2^\ell}(\Phi)$  it follows that $\pi(d_2d_3\ldots)=\pi((im^u)^{\ell+1}1^\f)$ has precisely $2^\ell+2^\ell=2^{\ell+1}$ different codings. So $x_{\ell+1}\in\uu_{2^{\ell+1}}(\Phi)$.
  By induction this completes the proof.
\end{proof}

Given $\de>0$,  for a subset $F$ of $\R$ we define its \emph{$\de$-neighborhood}   by
\[F_\de:=\set{x\in\R: |x-y|< \de~\textrm{for some }y\in F}.\]
In the following proposition we show that $\uu_1(\Phi)$ is closed by assuming $f_1(I)\cap f_2(I)=f_{m-1}(I)\cap f_m(I)=\emptyset$.
\begin{proposition}\label{prop:close-notclose}
Let $\Phi=(E, \set{f_i}_{i=1}^m)\in\ee$ with $I=conv(E)$. If $f_1(I)\cap f_2(I)=f_{m-1}(I)\cap f_m(I)=\emptyset$, then $\uu_k(\Phi)$ is closed if and only if $k\ne 2^\ell$ with $\ell\in\N$. In particular, $\uu_1(\Phi)$ is closed.
\end{proposition}
\begin{proof}
First we prove the necessity. Suppose $k=2^\ell$ for some integer $\ell\ge 1$. By Conditions (C) and (D) there exists $i\in\set{2,\ldots, m-2}$ such that $f_i(I)\cap f_{i+1}(I)=f_{im^u}(I)=f_{(i+1)1^v}(I)$. Then by Lemma \ref{lem:close-2} it follows that
\[
x_{n}:=\pi(1^n(im^u)^\ell 1^\f)\in\uu_{k}(\Phi)\quad\forall n\in\N.
\]
However, the limit $x_\f:=\lim_{n\to\f}x_n=\pi(1^\f)=a\in\uu_1(\Phi)$. So, $\uu_k(\Phi)$ is not closed for any $k=2^\ell$ with $\ell\in\N$.

To prove the sufficiency, in view of Lemma \ref{lem:29}, it suffices to prove that $\uu_1(\Phi)$ is closed. Let $T:\bigcup_{i=1}^{m} f_i(I)\rightarrow I$ be the inverse map of the IFS $\set{f_i}_{i=1}^m$, i.e.,
\[
T(x)=f_i^{-1}(x)\quad\textrm{if}\quad x\in f_i(I).
\]
Then $T$ is a multivalued map satisfying $E=T(E)$. So, if $x$ has a coding $(d_i)\in\set{1,\ldots, m}^\N$, then $T(x)$ has a coding $\si((d_i))=(d_{i+1})$, where $\si$ is the left-shift map. Note that $f_1(I)\cap f_2(I)=f_{m-1}(I)\cap f_m(I)=\emptyset$. Then
\[
g:=\min\set{dist(f_1(I), f_2(I)), dist(f_{m-1}(I), f_m(I))}>0.
\]
By Condition (D) it follows that if $f_i(I)\cap f_{i+1}(I)\ne\emptyset$, then there exist $u_i, v_i\in\N$ such that $f_i(I)\cap f_{i+1}(I)=f_{im^{u_i}}(I)=f_{(i+1)1^{v_i}}(I)$. Let
\[\de:=\min\set{r_i r_m^{u_i}\,g: f_{i}(I)\cap f_{i+1}(I)\ne\emptyset}>0.\]
Now we claim that
\begin{equation}\label{eq:unique-equiv}
\uu_1(\Phi)=\set{x\in E: T^n(x)\notin O_\de~\forall n\ge 0},
\end{equation}
where $O_\de$ is the $\delta$-neighborhood of $O:=\bigcup_{i=1}^{m-1}(f_i(I)\cap f_{i+1}(I))$.

First we prove the inclusion `$\supset$' in (\ref{eq:unique-equiv}). Take $x\notin\uu_1(\Phi)$. Then $x$ has (at least) two different codings $(d_i), (d_i')$ such that
\[
d_1\ldots d_N=d_1'\ldots d_N'\quad\textrm{and}\quad |d_{N+1}-d_{N+1}'|=1
\]
for some integer $N\ge 0$. This implies $T^N(x)\in f_{d_{N+1}}(I)\cap f_{d_{N+1}'}(I)\subset O_\de$, and then establishes the inclusion `$\supset$'.

To prove the inverse inclusion in (\ref{eq:unique-equiv}) it suffices to prove that any $x\in O_\de\cap E$ has at least two different codings. By the definition of $\de$ it follows that
\[O_\de\cap E=O\cap E.\] Take $x\in O\cap E$. Then $x\in f_i(I)\cap f_{i+1}(I)\cap E$ for some $i\in\set{2,\ldots,m-2}$. By Condition (D) there exist $u=u_i, v=v_i\in\N$ such that
\[
f_i(I)\cap f_{i+1}(I)=f_{im^u}(I)=f_{(i+1)1^v}(I),
\]
which implies  $f_{im^u}=f_{(i+1)1^v}$ by Lemma \ref{lem:21}. So by Lemma \ref{lem:key lem} any $x\in f_i(I)\cap f_{i+1}(I)\cap E$ has at least two different codings: one begins with $im^{u-1}$, and the other  begins with $(i+1)1^{v-1}$. This establishes the claim.

Note that $O_\de$ is the finite union of open intervals, and then it is an open set. By (\ref{eq:unique-equiv}) it follows that
\[
\uu_1(\Phi)=E\setminus\bigcup_{n=0}^\f T^{-n}(O_\de),
\]
and then it is closed. This completes the proof.
\end{proof}

\begin{proof}
[Proof of (i) $\Leftrightarrow$ (v) in Theorem \ref{th:1} (A) and (B)]
The equivalence (i) $\Leftrightarrow$ (v) follows by Propositions \ref{prop:notclose} and \ref{prop:close-notclose}.
\end{proof}

\section{Geometrical structure of $E$ and $\uu_1(\Phi)$}\label{sec:geometrical structure}
In this section  we show that the self-similar set $E$ and the set $\uu_1(\Phi)$ of points with a unique coding can be both described as the strongly connected graph-directed set satisfying the OSC  (see \cite{Mauldin_Williams_1988}).

\subsection{Geometrical structure of $\uu_1(\Phi)$}
Given $\Phi=(E, \set{f_i}_{i=1}^m)\in\ee$,  first we consider the graph-directed structure of  the univoque set $\uu_1(\Phi)$. Let
\[
\mathbf F:=\bigcup_{f_i(I)\cap f_{i+1}(I)=f_{im^{u_i}}(I)=f_{(i+1)1^{v_i}}(I)}\set{i m^{u_i}, (i+1)1^{v_i}}
\]
be the set of forbidden blocks, and let
\[
X_{\mathbf F}:=\set{(d_i)\in\set{1,\ldots, m}^\N: (d_i)\textrm{ does not contain any block from }\mathbf F}.
\]
Then $(X_{\mathbf F}, \si)$ is a $N$-step  subshift of finite type  (cf.~\cite{Lind_Marcus_1995}), where $N+1$ is the length of the longest word in $\mathbf F$, and $\si$ is the left shift map. For $n\in\N\cup\set{0}$ we denote by $B_n(X_{\mathbf F})$ the set of all length $n$ admissible words in $X_{\mathbf F}$, i.e.,
\[
B_n(X_{\mathbf F})=\set{\mathbf d=d_1\ldots d_n: \mathbf d\textrm{ occurs in some sequence of }X_{\mathbf F}}.
\]
In particular, for $n=0$ the set $B_0(X_{\mathbf F})$ is the singleton consisting of the empty word $\epsilon$. We will show that $(X_{\mathbf F}, \si)$ is transitive, which means any two admissible words  can be connected in $X_{\mathbf F}$.

Now we construct a directed graph $\mathcal G=(B_{N}(X_{\mathbf F}), \mathbf E)$ for the set $\uu_1(\Phi)$.
Let $B_{N}(X_{\mathbf F})$ be the set of vertices of our graph $\mathcal G$. So each admissible word of length $N$ in $X_{\mathbf F}$ is a vertex of $\mathcal G$.    For two vertices $\mathbf c=c_1\ldots c_{N}, \mathbf d=d_1\ldots d_{N}\in B_{N}(X_{\mathbf F})$ we draw a directed edge from $\mathbf c$ to $\mathbf d$, denoted by $\stackrel{\rightarrow}{\mathbf c \mathbf d}$,  if
\[c_2\ldots c_{N}=d_1\ldots d_{N-1}\quad \textrm{and}\quad c_1\ldots c_{N} d_{N}\in B_{N+1}(X_{\mathbf F}).\]
   In this case the corresponding map for the edge $\stackrel{\rightarrow}{\mathbf c \mathbf d}$ is denoted by  $f_{\stackrel{\rightarrow}{\mathbf c \mathbf d}}=f_{c_1}$. Let $\mathbf E$ be the set of all directed edges in $\mathcal G$. We will show that  the set  $\uu_1(\Phi)$ can be identical to the graph-directed set  satisfying the OSC.

Note that  the set $\uu_1(\Phi)$ is not alway closed (see Proposition \ref{prop:notclose}). We define a countable subset $\mathcal N$ of $\pi(X_{\mathbf F})$ which is not included in $\uu_1(\Phi)$.
\begin{definition}
The subset $\mathcal N$ of $\pi(X_{\mathbf F})$ is defined as follows:
\begin{itemize}
\item If $f_1(I)\cap f_2(I)=f_{21^{v}}(I)$ and $f_{m-1}(I)\cap f_m(I)=\emptyset$, then
\[\mathcal N=\bigcup_{n=0}^\f\bigcup_{\mathbf d\in B_n(X_{\mathbf F})}\set{\pi(\mathbf d(21^{v-1})^\f)};\]
\item If $f_{1}(I)\cap f_2(I)=\emptyset$ and $f_{m-1}(I)\cap f_{m}(I)=f_{(m-1)m^p}(I)$, then
\[\mathcal N=\bigcup_{n=0}^\f\bigcup_{\mathbf d\in B_n(X_{\mathbf F})}\set{\pi(\mathbf d((m-1)m^{p-1})^\f)};\]

\item If $f_1(I)\cap f_2(I)=f_{21^{v}}(I)$ and $f_{m-1}(I)\cap f_m(I)=f_{(m-1)m^p}(I)$, then
\[
\mathcal N=\bigcup_{n=0}^\f\bigcup_{\mathbf d\in B_n(X_{\mathbf F})}\set{\pi(\mathbf d(21^{v-1})^\f), \pi(\mathbf d((m-1)m^{p-1})^\f)}.
\]

\item If $f_1(I)\cap f_2(I)=f_{m-1}(I)\cap f_m(I)=\emptyset$, then $\mathcal N=\emptyset$.
\end{itemize}
\end{definition}
In fact, by Proposition \ref{cor:countable} it follows that any point in $\mathcal N$ has countably many codings.
For each word $\mathbf c=c_1\ldots c_{N}\in B_{N}(X_{\mathbf F})$ we set
\[
\uu_{\mathbf c}:=\set{\pi((d_i)): (d_i)\in X_{\mathbf F}, ~ d_1\ldots d_{N}=\mathbf c},
\]
where $\pi$ is the projection map defined in (\ref{eq:k1}).
By Lemma \ref{lem:21} and our construction of $X_{\mathbf F}$ it follows that
\begin{equation}\label{eq:U-1}
\uu_1(\Phi)\cup\mathcal N=\pi(X_{\mathbf F})=\bigcup_{\mathbf c\in B_{N}(X_{\mathbf F})}\uu_{\mathbf c},
\end{equation}
where the union in the above equation is pairwise disjoint. Furthermore, for each $\mathbf c\in B_{N}(X_{\mathbf F})$ we have
\begin{equation}\label{eq:U-2}
\uu_{\mathbf c}=\bigcup_{\stackrel{\rightarrow}{\mathbf c \mathbf d}\in\mathbf E}f_{\stackrel{\rightarrow}{\mathbf c \mathbf d}}(\uu_{\mathbf d}).
\end{equation}
One can also verify that the union in (\ref{eq:U-2}) is pairwise disjoint. Therefore, by (\ref{eq:U-1}) and (\ref{eq:U-2}) it follows that  up to a countable set $\mathcal N$ the set $\uu_1(\Phi)$ is a graph-directed set satisfying the OSC.

\begin{lemma}\label{lem:transitivity-U}
The subshift of finite type $X_{\mathbf F}$ is transitive. Or equivalently, the directed graph  $\mathcal G$ is strongly connected.
\end{lemma}
\begin{proof}
By our construction it is clear that $X_{\mathbf F}$ is a $N$-step subshift of finite type. So it suffices to prove that $X_{\mathbf F}$ is transitive.

By Condition (C) there exists $j\in\set{1,\ldots, m-1}$ such that
\begin{equation}\label{eq:trans-separation}
f_{j}(I)\cap f_{j+1}(I)=\emptyset.
\end{equation}
For any two admissible word $\mathbf c=c_1\ldots c_p$ and $\mathbf d=d_1\ldots d_q$ in $X_{\mathbf F}$ we will construct a word $\eta$ such that $\mathbf c \eta\mathbf d$ is still an admissible word in $X_{\mathbf F}$. In other words, the word $\mathbf c \eta\mathbf d$ does not contain any block from $\mathbf F$. Note that $X_{\mathbf F}$ is a   $N$-step subshift of finite type. We prove the transitivity in the following two steps. In the first step we show that $\mathbf c$ can be extended to the right which gives an admissible word $\mathbf c w$ ending with $m^{N}$; in the second step we show that the word $m^{N}$ can be connected to $\mathbf d$ via a word $w'$. Then by using \cite[Theorem 2.1.8]{Lind_Marcus_1995} it follows that $\mathbf c w w'\mathbf d$ is an admissible word in $X_{\mathbf F}$.

{\bf Step I.} We extend the word $\mathbf c$ to the right such that the new word $\mathbf c w$ is admissible in $X_{\mathbf F}$ and  ends with $m^{N}$. This will be verified by the following five cases.

(i). $\mathbf c=c_1\ldots c_p$ ends with $im^k$ for some $i\ne m$ and $k\in\set{1,\ldots, p-1}$. If $f_i(I)\cap f_{i+1}(I)=\emptyset$, then we can choose $w=m^{N}$, and one can easily check that $\c w m^\f\in X_{\mathbf F}$. If $f_i(I)\cap f_{i+1}(I)\ne\emptyset$, then there exist $u=u_i, v=v_i\in\N$ such that
\[
f_i(I)\cap f_{i+1}(I)=f_{im^u}(I)=f_{(i+1)1^v}(I).
\]
Since $\mathbf c=c_1\ldots c_{p-k-1}im^k$ is an admissible word, we have $k<u$. By (\ref{eq:trans-separation}) it follows that
\begin{itemize}
\item if $j\ne 1$, then we can take $w=jm^{N}$, and thus $\mathbf c w m^\f=c_1\ldots c_{p-k-1}im^k jm^\f\in X_{\mathbf F}$;
\item if $j=1$, then we can choose $w=(j+1)jm^{N}=21m^{N}$, and thus $\mathbf c w m^\f=c_1\ldots c_{p-k-1}im^k 21 m^\f\in X_{\mathbf F}$.
\end{itemize}

(ii). $\mathbf c=m^p$. Then by a similar argument as in Case (i) one can verify that $\mathbf c m^\f\in X_{\mathbf F}$. So we can take $w=m^N$ in this case.

(iii). $\mathbf c=c_1\ldots c_p$ ends with $i1^\ell$ for some $i\ne 1$ and $\ell\in\set{1,\ldots, p-1}$. If $f_{i-1}(I)\cap f_i(I)=\emptyset$, then in view of (\ref{eq:trans-separation}) we can take
\begin{equation}\label{eq:w}
w=\left\{\begin{array}{lll}
(j+1)jm^{N}&\quad\textrm{if }& j+1\ne m,\\
jm^{N}=(m-1)m^{N}&\quad\textrm{if }& j+1=m.
\end{array}\right.
\end{equation}
Indeed, one can check that $\mathbf c w m^\f=c_1\ldots c_{p-\ell-1}i 1^\ell (j+1) jm^\f\in X_{\mathbf F}$ if $j+1\ne m$, and $\mathbf c w m^\f=c_1\ldots c_{p-\ell-1}i 1^\ell (m-1)m^\f\in X_{\mathbf F}$ if $j+1=m$.

If $f_{i-1}(I)\cap f_i(I)\ne\emptyset$, then there exist $u=u_{i-1}, v=v_{i-1}\in\N$ such that
\[
f_{i-1}(I)\cap f_i(I)=f_{(i-1)m^u}(I)=f_{i1^v}(I).
\]
Since $\mathbf c=c_1\ldots c_{p-\ell-1}i 1^\ell$ is admissible,  we have $\ell<v$. Then we can take
\[
w=\left\{\begin{array}{lll}
jm^{N}&\quad\textrm{if }& j\ne 1,\\
(j+1)jm^{N}=21m^{N}&\quad\textrm{if }& j=1.
\end{array}\right.
\]
One can also verify that $\mathbf c w m^\f\in X_{\mathbf F}$.

(iv). $\mathbf c=1^p$. By a similar argument as in Case (iii) we can take the word $w$ as defined in (\ref{eq:w}). One can check that $\mathbf c w m^\f\in X_{\mathbf F}$.

(v). $\mathbf c=c_1\ldots c_p$  with $c_p=i\notin\set{1, m}$. If $f_i(I)\cap f_{i+1}(I)=\emptyset$, then we can take $w=m^{N}$ since $\mathbf cwm^\f=c_1\ldots c_{p-1}i m^\f\in X_{\mathbf F}$. If $f_i(I)\cap f_{i+1}(I)\ne\emptyset$, then in view of  (\ref{eq:trans-separation}) we can take $w$ as defined in (\ref{eq:w}).
Again, one can verify that $\mathbf c w m^\f\in X_{\mathbf F}$.

{\bf Step II.} There exists a  word $w'$ such that $m^{N}w'\mathbf d$ is admissible in $X_{\mathbf F}$. Suppose $(d_i)\in X_{\mathbf F}$ begins with $\mathbf d$. If $d_1\ne 1$, then take $w'=m$, and one can check that $m^{N}w'd_1d_2\ldots=m^{N+1} d_1d_2\ldots\in X_{\mathbf F}$. If $d_1=1$, then we take $w'=j+1$, and thus by (\ref{eq:trans-separation}) it follows that $m^{N}(j+1)d_1d_2\ldots\in X_{\mathbf F}$.

By Steps I and II it follows that $X_{\mathbf F}$ is a transitive subshift of finite type. So the directed graph $\mathcal G$ is strongly connected.
\end{proof}
By (\ref{eq:U-1}), (\ref{eq:U-2}) and Lemma \ref{lem:transitivity-U} it follows that $\uu_1(\Phi)$ is identical to a strongly connected graph-directed set satisfying the OSC. Then by  using \cite[Theorem 3]{Mauldin_Williams_1988}   the Hausdorff dimension of  $\uu_1(\Phi)$ can  be calculated via the spectrum radius of the corresponding  adjacency matrix of $\mathcal G$, and the corresponding Hausdorff measures of   $\uu_1(\Phi)$ is positive and finite.
\begin{proposition}\label{prop:measure-u1}
Let $\Phi=(E, \set{f_i}_{i=1}^m)\in\ee$.
Then up to a countable set the univoque set $\uu_1(\Phi)$ is a strongly connected graph-directed set satisfying the OSC. So, for $s=\dim_H\uu_1(\Phi)$ the $s$-dimensional Hausdorff measure of $\uu_1(\Phi)$ is positive and finite.
\end{proposition}

\subsection{Geometrical structure of $E$}  Similar to the construction of $\mathcal G$, we construct a directed graph $\mathcal G_*$ for the self-similar set $E$. Let
\[
\mathbf F_*:=\set{im^{u_i}: f_i(I)\cap f_{i+1}(I)=f_{im^{u_i}}(I)=f_{(i+1)1^{v_i}}(I)}
\]
be the set of forbidden blocks, and let
\[
X_{\mathbf F_*}:=\set{(d_i)\in\set{1,\ldots, m}^\N: (d_i) \textrm{ does not conatin any word from }\mathbf F_*}.
\]
Then $(X_{\mathbf F_*}, \si)$ is a $N_*$-step subshift of finite type, where $N_*+1$ is the length of the longest word in $\mathbf F_*$.

Now we describe the graph-directed structure of $E$ based on the $N_*$-step subshift of finite type $X_{\mathbf F_*}$.
We construct a directed graph $\mathcal G_*=(B_{N_*}(X_{\mathbf F_*}), \mathbf E_*)$ in the following way. Let $B_{N_*}(X_{\mathbf F_*})$ be the set of vertices of our graph $\mathcal G_*$. For two vertices   $\mathbf c=c_1\ldots c_{N_*}$ and $\mathbf d=d_1\ldots d_{N_*}\in B_{N_*}(X_{\mathbf F_*})$ we  connect a directed edge from $\mathbf c$ to $\mathbf d$, denoted by $\stackrel{\rightarrow}{\mathbf c \mathbf d}$,  if $c_2\ldots c_{N_*}=d_1\ldots d_{N_*-1}$ and $c_1\ldots c_{N_*} d_{N_*}\in B_{N_*+1}(X_{\mathbf F_*})$.  In this case we write the corresponding map  $f_{\stackrel{\rightarrow}{\mathbf c \mathbf d}}=f_{c_1}$.   Let $\mathbf E_*$ be the collection of all directed edges in the graph $\mathcal G_*$.   For each word $\mathbf c=c_1\ldots c_{N_*}\in B_{N_*}(X_{\mathbf F_*})$ let
\[E_{\mathbf c}:=\set{\pi((d_i)): (d_i)\in X_{\mathbf F_*}, ~d_1\ldots d_{N_*}=c_1\ldots c_{N_*}}.\]
 Then the self-similar set $E$ can be written as
\begin{equation}\label{eq:E-1}
E=\pi(X_{\mathbf F_*})=\bigcup_{\mathbf c\in B_{N_*}(X_{\mathbf F_*})}E_{\mathbf c}.
\end{equation}
By Lemma \ref{lem:21} and our construction of $X_{\mathbf F_*}$ it follows that the union in (\ref{eq:E-1}) is pairwise disjoint.  Furthermore, for each $\mathbf c\in B_{N_*}(X_{\mathbf F_*})$ we have
\begin{equation}\label{eq:E-2}
E_{\mathbf c}=\bigcup_{\stackrel{\rightarrow}{\mathbf c \mathbf d}\in\mathbf E_*} f_{\stackrel{\rightarrow}{\mathbf c \mathbf d}}(E_{\mathbf d}).
\end{equation}
One can also verify that the union in   (\ref{eq:E-2}) is pairwise disjoint.  Therefore, by (\ref{eq:E-1}) and (\ref{eq:E-2}) it follows that $E$ is a graph-directed set  satisfying the OSC which can be represented by the directed graph $\mathcal G_*$.

By a similar way as in the proof of Lemma \ref{lem:transitivity-U} one can show that the graph $\mathcal G_*$ is strongly connected.
\begin{lemma}\label{lem:transitivity-E}
The subshift of finite type $(X_{\mathbf F_*}, \si)$ is transitive. Or equivalently, the graph $\mathcal G_*$ is strongly connected.
\end{lemma}

Hence, by (\ref{eq:E-1}), (\ref{eq:E-2}), Lemma \ref{lem:transitivity-E} and using \cite[Theorem 3]{Mauldin_Williams_1988} it follows that the Hausdorff dimension of $E$   can   be calculated via the spectrum radius of the  adjacency matrix of $\mathcal G_*$, and then the corresponding Hausdorff measure of $E$ is   positive and finite.
\begin{proposition}\label{prop:measure-E}
Let $\Phi=(E, \set{f_i}_{i=1}^m)\in\ee$.
Then  the  set $E$ is a  strongly connected graph-directed set satisfying the OSC. So, for $t=\dim_H E$ the $t$-dimensional Hausdorff measure of $E$ is positive and finite.
\end{proposition}

In the next section we will explicitly determine the Hausdorff dimensions of $\uu_1(\Phi)$ and $E$.

\section{Hausdorff dimensions and Hausdorff measures of $E$ and  $\uu_k(\Phi)$}\label{sec:hausdorff measure-uk}
Let $\Phi=(E, \set{f_i}_{i=1}^m)\in\ee$. In this section we will investigate the Hausdorff dimensions and Hausdorff measures of $E$ and $\uu_k(\Phi)$, and prove Theorem \ref{th:measures}.

\subsection{Hausdorff dimensions of $E$ and $\uu_k(\Phi)$}
Recall from Propositions \ref{prop:measure-u1} and \ref{prop:measure-E}  that both the self-similar set $E$ and the univoque set $\uu_1(\Phi)$ can be identical to  strongly connected  graph-directed sets satisfying the  OSC, and thus the corresponding Hausdorff measures of $E$ and $\uu_1(\Phi)$ are positive and finite. Based on this and by using Bonferroni inequality we are able to determine the explicit formulae for the Hausdorff dimensions of $E$ and $\uu_k(\Phi)$, respectively. Moreover, we prove that the   Hausdorff measure of $\uu_k(\Phi)$ is infinite for any $k\ge 2$ satisfying $\uu_k(\Phi)\ne\emptyset$.

Given $\Phi=(E, \set{f_i}_{i=1}^n)\in\ee$, by Propositions \ref{prop:measure-u1} and \ref{prop:measure-E} it follows that the corresponding Hausdorff measures of $E$ and $\uu_1(\Phi)$ are both positive and finite, i.e., for $s:=\dim_H \uu_1(\Phi)$ and $t:=\dim_H E$ we have
\begin{equation}\label{eq:measure-positive-finite}
\mathcal H^s(\uu_1(\Phi))\in(0, \f)\quad\textrm{and}\quad \mathcal H^t(E)\in(0, \f).
\end{equation}
Furthermore, note that $\uu_1(\Phi)$ is a proper subset of $E$, and $E$ is a strongly connected graph-directed set satisfying the OSC. Then $\dim_H\uu_1(\Phi)<\dim_H E$. Note that
\[
E=\uu_{2^{\aleph_0}}(\Phi)\cup \uu_{\aleph_0}(\Phi)\cup\bigcup_{k=1}^\f\uu_k(\Phi).
\]
So, by (\ref{eq:measure-positive-finite}) and Theorem \ref{th:1} it follows that
\begin{equation}\label{eq:uncountable}
\dim_H\uu_{2^{\aleph_0}}(\Phi)=\dim_H E,\quad\textrm{and}\quad \mathcal H^t(\uu_{2^{\aleph_0}}(\Phi))=\mathcal H^t(E)\in(0, \f).
\end{equation}
In the remaining part of this subsection we will focus on the explicit formulae for the Hausdorff dimensions of $E$ and $\uu_1(\Phi)$, respectively. This will be done by the Bonferroni inequality (cf.~\cite[Exercise 3.12]{Durrett-2010}).

For $\Phi=(E, \set{f_i}_{i=1}^m)\in\ee$ we recall from Definition \ref{def:overlap-vector} that the  {overlapping   vectors} $\mathbf u=(u_1,\ldots, u_m), \mathbf v=(v_1,\ldots, v_m)$ of $\Phi$ are defined as follows: if $f_i(I)\cap f_{i+1}(I)\ne\emptyset$, then there exist $u, v\in\N$ such that $f_i(I)\cap f_{i+1}(I)=f_{im^u}(I)=f_{(i+1)1^v}(I)$, and in this case $u_i=u$ and $v_i=v$. Otherwise,  $u_i=v_i=\f$.
In particular, $u_m=v_m=\f$.
\begin{proposition}\label{lem:dim-E}
Let $\Phi=(E, \set{f_i}_{i=1}^m)\in\ee$ with $f_i(x)=r_i x+b_i$ for $1\le i\le m$. Then the Hausdorff dimension  $t=\dim_H E$ satisfies
\[
\sum_{i=1}^m r_i^t(1-r_m^{u_i t})=1.
\]
\end{proposition}
\begin{proof}
Note that $E=\bigcup_{i=1}^m f_i(E)$, and by Condition (B) that any three basic intervals have empty intersection. Then by Bonferroni inequality it follows that
\begin{equation}\label{eq:bonferroni}
\mathcal H^t(E)=\mathcal H^t\left(\bigcup_{i=1}^m f_i(E)\right)=\sum_{i=1}\mathcal H^t(f_i(E))-\sum_{i=1}^{m-1}\mathcal H^t(f_i(E)\cap f_{i+1}(E)),
\end{equation}
where $t=\dim_H E$.
Observe by Condition (D) that if $f_i(I)\cap f_{i+1}(I)\ne\emptyset$, then $f_i(I)\cap f_{i+1}(I)=f_{i m^{u_i}}(I)$ for some positive integer $u_i$. By Lemma \ref{lem:21} this also implies
\[f_i(E)\cap f_{i+1}(E)=f_{im^{u_i}}(E).\]
 Since the Hausdorff measure $\mathcal H^t$ is translation invariant, using the scaling property  of the Hausdorff measure in (\ref{eq:bonferroni}) it follows that
\begin{align*}
\mathcal H^t(E)&=\sum_{i=1}^m  \mathcal H^t(f_i(E))-\sum_{f_{i}(I)\cap f_{i+1}(I)\ne\emptyset}\mathcal H^t(f_{im^{u_i}}(E))\\
&=\sum_{i=1}^m r_i^t\mathcal H^t(E)-\sum_{f_{i}(I)\cap f_{i+1}(I)\ne\emptyset}r_i^t r_m^{u_i t}\mathcal H^t(E)\\
&=\sum_{i=1}^{m} r_i^t(1-r_m^{u_i t}) \mathcal H^t(E),
\end{align*}
where the last equality holds by using that $r_m^{u_i t}=0$ if $f_i(I)\cap f_{i+1}(I)=\emptyset$. Note by (\ref{eq:measure-positive-finite}) that $\mathcal H^t(E)\in(0, \f)$. Then the lemma follows by dividing $\mathcal H^t(E)$ on both sides of the above equation.
\end{proof}
Now we turn to determine the explicit formula for the Hausdorff dimension of $\uu_1(\Phi)$.
First we need the following   lemma.  Set $\uu:=\uu_1(\Phi)$, and for a word $\mathbf w\in\set{1,\ldots, m}^*$ write $\uu(\mathbf w):=\uu\cap f_{\mathbf w}(E)$. Then any $x\in\uu(\mathbf w)$ has a unique coding with a prefix $\mathbf w$.

\begin{lemma}\label{lem:dim-u0}
Let $\Phi=(E, \set{f_i}_{i=1}^m)\in\ee$ with $f_i(x)=r_i x+b_i$ for $1\le i\le m$.
\begin{enumerate}[{\rm(i)}]
\item For any $1\le i\le m$,
\[
\uu(i)=\left\{
\begin{array}{lll}
f_i(\uu)&\textrm{if}&  f_{i-1}(I)\cap f_i(I)=f_{i}(I)\cap f_{i+1}(I)=\emptyset,\\
f_i(\uu)\setminus f_i(\uu(1^{v_{i-1}}))&\textrm{if}&  f_{i-1}(I)\cap f_i(I)\ne\emptyset\textrm{ and }f_{i}(I)\cap f_{i+1}(I)=\emptyset,\\
f_i(\uu)\setminus f_i(\uu(m^{u_i}))&\textrm{if}&  f_{i-1}(I)\cap f_i(I)=\emptyset\textrm{ and }f_{i}(I)\cap f_{i+1}(I)\ne\emptyset,\\
f_i(\uu)\setminus\big(f_i(\uu(1^{v_{i-1}}))\cup f_i(\uu(m^{u_i}))\big)&\textrm{if}& f_{i-1}(I)\cap f_i(I)\ne\emptyset\textrm{ and }f_{i}(I)\cap f_{i+1}(I)\ne\emptyset.
\end{array}\right.
\]
\item  For any $u, v\in\N$ we have
\[
\uu(m^{u})=\left\{\begin{array}{lll}
f_{m^{u}}(\uu)&\textrm{if}& f_{m-1}(I)\cap f_m(I)=\emptyset,\\
f_{m^{u}}(\uu)\setminus f_{m^{u}}(\uu(1^{v_{m-1}}))&\textrm{if}& f_{m-1}(I)\cap f_m(I)\ne\emptyset,
\end{array}
\right.
\]
and
\[
\uu(1^{v})=\left\{\begin{array}{lll}
f_{1^{v}}(\uu)&\textrm{if}& f_1(I)\cap f_2(I)=\emptyset,\\
f_{1^{v}}(\uu)\setminus f_{1^{v}}(\uu(m^{u_1}))&\textrm{if}& f_1(I)\cap f_2(I)\ne\emptyset.
\end{array}
\right.
\]
\end{enumerate}
\end{lemma}
\begin{proof}
Let $i\in\set{1,2,\ldots, m}$. Since the proofs for different cases in (i) are similar, we only prove  that
\[
\uu(i)=f_i(\uu)\setminus\big( f_i(\uu(1^{v_{i-1}}))\cup f_i(\uu(m^{u_i}))\big)
\]
assuming that  $f_{i-1}(I)\cap f_i(I)\ne\emptyset$ and $f_{i}(I)\cap f_{i+1}(I)\ne\emptyset$.

 Take $x\in\uu(i)$. Then $x$ has a unique coding $\pi^{-1}(x)=x_1x_2\ldots$ with $x_1=i$. Clearly, $\pi(x_2x_3\ldots)\in\uu$. This implies $x=\pi(i x_2x_3\ldots)\in f_i(\uu)$. Since by Condition (D) that $f_{i-1}(I)\cap f_i(I)=f_{(i-1)m^{u_{i-1}}}(I)=f_{i1^{v_{i-1}}}(I)$, any point in $f_i(\uu(1^{v_{i-1}}))$ has at least two different codings: one begins with $i 1^{v_{i-1}}$ and another begins with $(i-1)m^{u_{i-1}}$. This gives $x\notin f_i(\uu(1^{v_{i-1}}))$. Similarly, note by Condition (D) that $f_i(I)\cap f_{i+1}(I)=f_{im^{u_i}}(I)=f_{(i+1)1^{v_i}}(I)$. Then any point  in $f_i(\uu(m^{u_i}))$ has at least two codings: one begins with $i m^{u_i}$ and another begins with $(i+1)1^{v_i}$. So, $x\notin f_i(\uu(m^{u_i}))$. This proves $\uu(i)\subset f_i(\uu)\setminus\big(f_i(\uu(1^{v_{i-1}}))\cup f_i(\uu(m^{u_i}))\big)$.

 To prove the inverse inclusion we take $y\in f_i(\uu)\setminus\big(f_i(\uu(1^{v_{i-1}}))\cup f_i(\uu(m^{u_i}))\big)$. Then $y$ has a coding $y_1y_2\ldots$ satisfies
 \begin{equation}\label{eq:ecnu-1}
 y_1=i\quad\textrm{and}\quad \pi(y_2y_3\ldots)\in\uu.
 \end{equation}
  Furthermore,
  \begin{equation}\label{eq:ecnu-2}
  y_2\ldots y_{1+v_{i-1}}\ne 1^{v_{i-1}}\quad \textrm{and}\quad y_2\ldots y_{1+u_i}\ne m^{u_i}.
  \end{equation}
   It remains to prove $y\in\uu$. Suppose on the contrary that $y$ has another coding $y_1'y_2'\ldots$. If $y_1'=y_1=i$, then
 \[
 \pi(y_2'y_3'\ldots)=\pi(y_2y_3\ldots)
 \]
 has at least two different codings, leading to a contradiction with (\ref{eq:ecnu-1}). So $y_1'\ne y_1$. Note that $y_1=i$. This implies  $y\in f_{i-1}(I)\cap f_i(I)$ or $y\in f_i(I)\cap f_{i+1}(I)$. If $y\in f_{i-1}(I)\cap f_i(I)$, then by   (\ref{eq:ecnu-1}) it follows that $y_2\ldots y_{1+v_{i-1}}=1^{v_{i-1}}$, leading to a contradiction with (\ref{eq:ecnu-2}). If $y\in f_{i}(I)\cap f_{i+1}(I)$, then by (\ref{eq:ecnu-1}) we obtain  $y_2\ldots y_{1+u_i}=m^{u_i}$, again leading to a contradiction with (\ref{eq:ecnu-2}). Therefore, $y_1y_2\ldots$ is the unique coding of $y$, i.e., $y\in\uu(i)$. This completes the proof.

 (ii) can be proved analogously as in (i).
\end{proof}

\begin{proposition}\label{lem:dim-u1}
Let $\Phi=(E, \set{f_i}_{i=1}^m)\in\ee$ with $f_i(x)=r_i x+b_i$ for $1\le i\le m$. Then the Haudorff dimension $s=\dim_H \uu_1(\Phi)$ satisfies
\[
\sum_{i=1}^m r_i^s\left(1- \frac{r_m^{u_i s}(2-r_1^{v_{m-1}s}-r_m^{u_1 s})}{1-r_1^{v_{m-1}s}r_m^{u_1 s}}\right)=1.
\]
\end{proposition}
\begin{proof}
Suppose without loss of generality that $f_1(I)\cap f_2(I)\ne \emptyset$ and $f_{m-1}(I)\cap f_m(I)\ne\emptyset$. Let $\uu=\uu(\Phi)$, and for a word $\mathbf w\in\set{1,2,\ldots, m}^*$ we write $\uu(\mathbf w)=\uu\cap f_{\mathbf w}(E)$. Note that
\[\uu=\bigcup_{i=1}^m\uu(i),\]
 where  the union is pairwise disjoint. By Lemma \ref{lem:dim-u0} (i) it follows that if $f_{i-1}(I)\cap f_i(I)\ne\emptyset$ and  $f_i(I)\cap f_{i+1}(I)\ne\emptyset$, then   $\uu(i)$ can be written as
\[
\uu(i)=f_i(\uu)\setminus\big(f_i(\uu(1^{v_{i-1}})\cup f_i(\uu(m^{u_i})))\big),
\]
where the union is disjoint. For other cases $\uu(i)$ can be written analogously, see Lemma \ref{lem:dim-u0} (i).
Let $s:=\dim_H\uu$. Then the $s$-dimensional Hausdorff measure of $\uu$ can be written as
\begin{equation}\label{eq:ecnu-11}
\begin{split}
\mathcal H^s(\uu)=\sum_{i=1}^m\mathcal H^s(\uu(i))&=\sum_{i=1}^m\Big(\mathcal H^s(f_i(\uu))-\mathcal H^s\big(f_i(\uu(1^{v_{i-1}}))\big)-\mathcal H^s\big(f_i(\uu(m^{u_i}))\big)\Big)\\
&=\sum_{i=1}^m r_i^s\Big(\mathcal H^s(\uu)-\mathcal H^s(\uu(1^{v_{i-1}}))-\mathcal H^s(\uu(m^{u_i}))\Big),
\end{split}
\end{equation}
where we set $v_0:=\f$.
We emphasize that  if $f_{i-1}(I)\cap f_i(I)=\emptyset$, then $v_{i-1}=\f$ which implies  $\mathcal H^s(\uu(1^{v_{i-1}}))=0$. Similarly, if $f_i(I)\cap f_{i+1}(I)=\emptyset$, then $u_i=\f$ which gives $\mathcal H^s(\uu(m^{u_i}))=0$. So (\ref{eq:ecnu-11}) holds for all cases independent of the locations of the basic intervals $f_{i-1}(I), f_i(I)$ and $f_{i+1}(I)$.

Since $f_1(I)\cap f_2(I)\ne\emptyset$ and  $f_{m-1}(I)\cap f_m(I)\ne\emptyset$,  by   Lemma \ref{lem:dim-u0} (ii) it follows that
\begin{align*}
\mathcal H^s(\uu(1^{v_{i-1}}))&=\mathcal H^s(f_{1^{v_{i-1}}}(\uu))-\mathcal H^s\big(f_{1^{v_{i-1}}}(\uu(m^{u_1}))\big)\\
&=r_1^{v_{i-1}s}\Big(\mathcal H^s(\uu)-\mathcal H^s(\uu(m^{u_1}))\Big)\\
&=r_1^{v_{i-1}s}\mathcal H^s(\uu)-r_1^{v_{i-1}s}\Big(\mathcal H^s(f_{m^{u_1}}(\uu))-\mathcal H^s(f_{m^{u_1}}(\uu(1^{v_{m-1}})))\Big)\\
&=r_1^{v_{i-1}s}\mathcal H^s(\uu)-r_1^{v_{i-1}s}r_m^{u_1 s}\mathcal H^s(\uu)+r_1^{v_{i-1}s}r_m^{u_1 s}\mathcal H^s(\uu(1^{v_{m-1}}))\\
&=\mathcal H^s(\uu)r_1^{v_{i-1}s}(1-r_m^{u_1 s})+r_1^{v_{i-1}s}r_m^{u_1 s}\mathcal H^s(\uu(1^{v_{m-1}})).
\end{align*}
Repeating using Lemma \ref{lem:dim-u0} (ii) in the above equation we can deduce that for any $N\in\N\cup\set{0}$,
\begin{align*}
\mathcal H^s(\uu(1^{v_{i-1}}))&=\mathcal H^s(\uu)r_1^{v_{i-1}s}(1-r_m^{u_1 s})\sum_{k=0}^N(r_1^{v_{m-1}s}r_m^{u_1 s})^k\\
&\qquad+r_1^{v_{i-1}s}r_m^{u_1 s}(r_1^{v_{m-1}s}r_m^{u_1 s})^N\mathcal H^s(\uu(1^{v_{m-1}})).
\end{align*}
Letting $N\to\f$, and then $(r_1^{v_{m-1}s}r_m^{u_1 s})^N\to 0$, using that $\mathcal H^s(\uu(1^{v_{m-1}}))\le \mathcal H^s(\uu)<\f$ we obtain
\begin{equation}\label{eq:ecnu-12}
\mathcal H^s(\uu(1^{v_{i-1}}))=\mathcal H^s(\uu)\frac{r_1^{v_{i-1}s}(1-r_m^{u_1 s})}{1-r_1^{v_{m-1}s}r_m^{u_1 s}}.
\end{equation}
Similarly, one can prove
\begin{equation}\label{eq:ecnu-13}
\mathcal H^s(\uu(m^{u_i}))=\mathcal H^s(\uu)\frac{r_m^{u_i s}(1-r_1^{v_{m-1}s})}{1-r_1^{v_{m-1}s}r_m^{u_1 s}}.
\end{equation}

Substituting (\ref{eq:ecnu-12}) and (\ref{eq:ecnu-13}) into (\ref{eq:ecnu-11}), and using that $\mathcal H^s(\uu)\in(0, \f)$ it follows that
\begin{align*}
1&=\sum_{i=1}^m r_i^s-\frac{1-r_m^{u_1 s}}{1-r_1^{v_{m-1}s}r_m^{u_1 s}}\sum_{i=1}^m r_i^s r_1^{v_{i-1}s}-\frac{1-r_1^{v_{m-1}s}}{1-r_1^{v_{m-1}s}r_m^{u_1 s}}\sum_{i=1}^m r_i^s r_m^{u_i s}.
\end{align*}
Note that $v_0=\f, u_m=\f$, and by $f_{i1^{v_{i-1}}}=f_{(i-1)m^{u_{i-1}}}$ that $r_i r_1^{v_{i-1}}=r_{i-1}r_m^{u_{i-1}}$. Rearranging the second summation  in the above equation we conclude that the Hausdorff dimension $s=\dim_H\uu$ satisfies
\begin{align*}
1&=\sum_{i=1}^m r_i^s-\frac{1-r_m^{u_1 s}}{1-r_1^{v_{m-1}s}r_m^{u_1 s}}\sum_{i=1}^m r_i^s r_m^{u_{i}s}-\frac{1-r_1^{v_{m-1}s}}{1-r_1^{v_{m-1}s}r_m^{u_1 s}}\sum_{i=1}^m r_i^s r_m^{u_i s}\\
&=\sum_{i=1}^m r_i^s\left(1-\frac{r_m^{u_i s}(2-r_1^{v_{m-1}s}-r_m^{u_1 s})}{1-r_1^{v_{m-1}s}r_m^{u_1 s}}\right).
\end{align*}
This completes the proof.
\end{proof}

\subsection{Hausdorff measure of $\uu_k(\Phi)$}
Given $\Phi=(E, \set{f_i}_{i=1}^m)\in\ee$, in this subsection we will show that the corresponding Hausdorff measure of $\uu_k(\Phi)$ is infinite for any $k\ge 2$ satisfying $\uu_k(\Phi)\ne\emptyset$.

Let $\Phi=(E, \set{f_i}_{i=1}^m)\in\ee$.   By Lemmas \ref{lem:finite-k} and \ref{lem:29} we construct a large subset of $\uu_k(\Phi)$ as described in the following lemma.
For simplicity we write $\uu:=\uu_1(\Phi)$ and $\us:=\pi^{-1}(\uu)$. Furthermore, we denote by $B_n(\us)$ the set of all length $n$ prefixes of sequences from $\us$.

\begin{lemma}\label{lem:subset of Uk}
Let $\Phi=(E, \set{f_i}_{i=1}^m)\in\ee$ with $I=conv(E)$, and let $k\in\N$.
\begin{enumerate}[{\rm(i)}]
\item If $f_1(I)\cap f_2(I)=f_{1m^u}(I)$ and $f_{i}(I)\cap f_{i+1}(I)=\emptyset$ for some $i\in\set{2,\ldots, m-1}$, then
\[
\pi(c_1\ldots c_n\, 1 m^{u(k-1)}\,\mathbf d)\in\uu_k(\Phi)
\]
for any $c_1\ldots c_n\in B_n(\us)$ with $c_n=i+1$ and for any $\mathbf d=d_1d_2\ldots\in\us$ with $d_1=i$.

\item If $f_{m-1}(I)\cap f_m(I)=f_{m1^v}(I)$ and $f_i(I)\cap f_{i+1}(I)=\emptyset$ for some $i\in\set{1,\ldots, m-2}$, then
\[
\pi(c_1\ldots c_n\,m1^{v(k-1)}\,\mathbf d)\in\uu_k(\Phi)
\]
for any $c_1\ldots c_n\in B_n(\us)$ with $c_n=i$ and for any $\mathbf d=d_1d_2\ldots\in\us$ with $d_1=i+1$.

\item If $f_1(I)\cap f_2(I)=f_{m-1}(I)\cap f_m(I)=\emptyset$ and $f_i(I)\cap f_{i+1}(I)=f_{im^u}(I)$ for some $i\in\set{2,\ldots, m-2}$, then
\[
\pi(c_1\ldots c_n\,(im^u)^{k}\,\mathbf d)\in\uu_{2^{k}}(\Phi)
\]
for any $c_1\ldots c_n\in B_n(\us)$ with $c_n=m$ and for any $\mathbf d=d_1d_2\ldots\in\us$ with $d_1=m$.
\end{enumerate}
\end{lemma}
For $\Phi=(E, \set{f_i}_{i=1}^m)\in\ee$ note by Proposition \ref{prop:measure-u1} that $\uu_1(\Phi)$ is identical to a strongly connected graph-directed set satisfying the OSC.  In \cite{Mauldin_Williams_1988} Mauldin and Williams showed that the Hausdorff dimension $s=\dim_H\uu_1(\Phi)$ can be calculated via the spectral radius of the corresponding  adjacency matrix   $A(s)=(a_{i, j}(s))$ which defined in  the following way.
 Recall from Section \ref{sec:geometrical structure} that $\uu_1(\Phi)$ can be represented by the directed graph $\mathcal G=(B_N(X_{\mathbf F}), \mathbf E)$. The size of the matrix $A(s)$ is $|B_N(X_{\mathbf F})|\times |B_N(X_{\mathbf F})|$. For two vertices $\mathbf c=c_1\ldots c_N$ and $\mathbf d=d_1\ldots d_N$, if $\mathbf c$ is connected to $\mathbf d$, then  we define the map for the edge $\stackrel{\rightarrow}{\mathbf c\mathbf d}$ by$f_{\stackrel{\rightarrow}{\mathbf c\mathbf d}}(x)= f_{c_1}(x)$. In this case, the corresponding entry of $A(s)$ is defined by
\[
a_{\mathbf c, \mathbf d}(s)=r_{c_1}^s.
\]
If $\mathbf c$ is not connected to $\mathbf d$, then we define $a_{\mathbf c,\mathbf d}(s)=0$.

Note by Lemma \ref{lem:transitivity-U} that the directed graph $\mathcal G$ is strongly connected. Then $A(s)$ is an irreducible non-negative   matrix. So it has a unique largest non-negative eigenvalue $\rho(s)$, which is also called the \emph{Perron eigenvalue} of $A(s)$.
It was shown in \cite[Theorem 3]{Mauldin_Williams_1988} that the Hausdorff dimension $s=\dim_H\uu_1(\Phi)$ satisfies
\[\rho(s)=1.\]
\begin{proposition}
\label{prop:measure-Uk}
Let $\Phi=(E, \set{f_i}_{i=1}^m)\in\ee$ and let $s=\dim_H\uu_1(\Phi)$. Then
\[
\mathcal H^s(\uu_k(\Phi))=\f
\]for any $k\ge 2$ satisfying $\uu_k(\Phi)\ne\emptyset$.
\end{proposition}
\begin{proof}
Let $I=conv(E)$ and let $k\ge 2$ with $\uu_k(\Phi)\ne\emptyset$. Since the proof for different cases are similar, we assume without loss of generality that $f_1(I)\cap f_2(I)\ne\emptyset$ and $f_i(I)\cap f_{i+1}(I)=\emptyset$ for some $2\le i<m$. So there exists a positive integer $u$ such that $f_1(I)\cap f_2(I)=f_{1m^u}(I)$.
By Lemma \ref{lem:subset of Uk} (i) it follows that
\[
\bigcup_{n=1}^\f\bigcup_{c_1\ldots c_n\in B_n(\us), c_n=i+1}\set{\pi(c_1\ldots c_n 1 m^{u(k-1)}d_1d_2\ldots): (d_i)\in\us, d_1=i}\subset\uu_k(\Phi),
\]
where $\us=\pi^{-1}(\uu_1(\Phi))$. Furthermore, note that $k\ge 2$, $c_1\ldots c_n$ is an admissible word in $\us$, and $f_{i}(I)\cap f_{i+1}(I)=\emptyset$. Then  one can verify that the union in the above equation is pairwise disjoint. Let $s=\dim_H\uu_1(\Phi)$.  Therefore,
\begin{equation}\label{eq:cq-1}
\begin{split}
\mathcal H^s(\uu_k(\Phi))&\ge \sum_{n=1}^\f\sum_{c_1\ldots c_n\in B_n(\uu), c_n=i+1}\mathcal H^s\Big(\set{\pi(c_1\ldots c_n 1 m^{u(k-1)}d_1d_2\ldots): (d_i)\in\us, d_1=i}\Big)\\
&=D \sum_{n=1}^\f\sum_{c_1\ldots c_n\in B_n(\us), c_n=i+1}\left(\prod_{j=1}^n r_{c_j}^s\right),
\end{split}
\end{equation}
where
\begin{align*}
D&:=\mathcal H^s\big(\set{\pi(1m^{u(k-1)}d_1d_2\ldots): (d_i)\in\mathbf U, d_1=i}\big)\\
&=
r_1^s r_m^{u(k-1)s}\mathcal H^s(f_i(E)\cap \uu_1(\Phi))>0
\end{align*}
using Proposition \ref{prop:measure-u1}.  Note that $D$ is a constant independent of the summation in (\ref{eq:cq-1}). By using the Perron-Frobenius Theorem (cf.~\cite[Chapter 4]{Lind_Marcus_1995}) it follows that
\begin{equation}\label{eq:cq-2}
\sum_{c_1\ldots c_n\in B_n(\us), c_n=i+1}\left(\prod_{j=1}^n r_{c_j}^s\right)\ge D_0\cdot \rho(s)^n
\end{equation}
for some constant $D_0>0$, where $\rho(s)$ is the Perron eigenvalue of the  matrix $\mathbf A(s)$. Note that $\rho(s)=1$. By (\ref{eq:cq-1}) and (\ref{eq:cq-2}) we conclude that
\[
\mathcal H^s(\uu_k(\Phi))\ge \sum_{n=1}^\f D\cdot D_0=\f.
\]
\end{proof}
\begin{proof}[Proof of Theorem \ref{th:measures}]
The Hausdorff dimensions and Hausdorff measures of $E$ and $\uu_{2^{\aleph_0}}(\Phi)$ follow by Propositions \ref{prop:measure-E} and   \ref{lem:dim-E}. This proves (i). For the Hausdorff dimension of  $\uu_k(\Phi)$ it can be deduced from Theorem \ref{th:1} and Proposition \ref{lem:dim-u1}. For the Hausdorff measures of $\uu_k(\Phi)$ it follows from Propositions \ref{prop:measure-u1} and \ref{prop:measure-Uk}. This completes the proof.
\end{proof}

\section{Local dimension of self-similar measure }\label{sec:local dimension} Let $\Phi=(E, \set{f_i}_{i=1}^m)\in\ee$.
Given a probability vector $\p=(p_1,\ldots, p_m)$ with each $p_i>0$, recall from the first section that $\mu_\p$ is the self-similar measure supported on $E$ satisfying
\[
\mu_\p=\sum_{i=1}^m p_i\mu_\p\circ f_i^{-1}.
\]
In this section we will determine the local dimension of $\mu_\p$ at points in $\uu_k(\Phi)$ and $\uu_{\aleph_0}(\Phi)$.

\subsection{Local dimension of $\mu_\p$ at points in $\uu_k(\Phi)$} First we consider the local dimension of $\mu_\p$ at points in $\uu_k(\Phi)$.   Recall that  $B(x, r)=(x-r, x+r)$ is the open interval with center at $x$ and radius $r$.
 \begin{lemma}\label{lem:scaling property}
 Let $h(x)=\alpha x+b$ with $\alpha>0$. Then
 \[
 h(B(c, r))=B(h(c), \alpha r).
 \]
 \end{lemma}
 \begin{proof}
 For any $z=h(y)\in h(B(c, r))$ with $|y-c|< r$ we have
 \[
 |z-h(c)|=\alpha|y-c|<\alpha r,
 \]
 which implies that $z\in B(h(c), \alpha r)$. On the other hand, for any $z\in B(h(c), \alpha r)$ we have
 \[
 \alpha\left|\frac{z-b}{\alpha}-c\right|=\left|h\Big(\frac{z-b}{\alpha}\Big)-h(c)\right|=|z-h(c)|<\alpha r.
 \]
 Thus, $z=h(\frac{z-b}{\alpha})$ with $\frac{z-b}{\alpha}\in B(c, r)$.
 \end{proof}
 In the following proposition we show that the local dimension of $\mu_\p$ at each point in $\uu_k(\Phi)$ is  uniquely  determined by a  point in $\uu_1(\Phi)$.
\begin{proposition}
\label{prop:finite-unique}
Let $\Phi=(E, \set{f_i}_{i=1}^m)\in\ee$ and $k\ge 2$. Then for any $x\in\uu_k(\Phi)$ there exists  a word $\mathbf i\in\set{1,\ldots, m}^*$ and a unique $y\in\uu_1(\Phi)$ such that $x=f_{\mathbf i}(y)$, and
\[
\underline{\dim}_{loc}\mu_\p(x)=\underline{\dim}_{loc}\mu_\p(y),\quad \overline{\dim}_{loc}\mu_\p(x)=\overline{\dim}_{loc}\mu_\p(y).
\]
\end{proposition}
\begin{proof}
Since the proofs for different cases are similar, we assume without loss of generality that $f_1(I)\cap f_2(I)\ne\emptyset$ and $f_{m-1}(I)\cap f_m(I)\ne\emptyset$, where $I=conv(E)$ is the convex hull of $E$.
Let
\[S=\bigcup_{i=1}^{m-1}\big(f_i(E)\cap f_{i+1}(E)\big).\]
 Then any $x\in S$ has at least two different codings. Define the expanding map on $E$ by
\[
T: E\rightarrow E;\quad x\mapsto T(x)=\left\{
\begin{array}{lll}
f_i^{-1}(x)&\textrm{if}& x\in f_i(E)\setminus S\\
f_i^{-1}(x)&\textrm{if}& x\in f_i(E)\cap f_{i+1}(E).
\end{array}\right.
\]
Then $\uu_1(\Phi)=\set{x\in E: T^n(x)\notin S~\forall n\ge 0}$.

Fix an integer $k\ge 2$ and take  $x\in\uu_k(\Phi)$. Then there exists a smallest integer $k_1\ge 0$ such that $y_0:=T^{k_1}(x)\in S$. So there exists a unique block $i_1\ldots i_{k_1}\in\set{1,\ldots, m}^{k_1}$ (it is the empty block $\epsilon$ if $k_1=0$) such that
$
x=f_{i_1\ldots i_{k_1}}(y_0).
$
Since $y_0\in S$, there exists $j_1\in\set{1,\ldots, m-1}$ such that
\[
y_0\in f_{j_1}(E)\cap f_{j_1+1}(E)=f_{j_1m^u}(E)=f_{(j_1+1)1^v}(E),
\]
where $u=u_{j_1}, v=v_{j_1}\in\N$ are the overlapping indices. Note that $x\notin\uu_{\aleph_0}(\Phi)$. By Corollary \ref{cor:countable} it follows that $y_0\notin\set{f_{j_1m^u}(a), f_{(j_1+1)1^v}(b)}$. So, either there exists an integer $\ell_1\ge 0$ such that
\begin{equation}\label{eq:finite-1}
y_0\in f_{j_1 m^u 1^{\ell_1}}(E)\setminus f_{j_1 m^u 1^{\ell_1+1}}(E),
\end{equation}
or there exists an integer $\ell_1'\ge 0$ such that
\begin{equation}\label{eq:finite-2}
y_0\in f_{(j_1+1)1^{v} m^{\ell_1'}}(E)\setminus f_{(j_1+1)1^v m^{\ell_1'+1}}(E).
\end{equation}

Since the proof for the case in (\ref{eq:finite-2}) can be handled similarly, without loss of generality we assume (\ref{eq:finite-1}) holds.
Note that  $f_{m-1}(I)\cap f_m(I)=f_{(m-1)m^p}(I)=f_{m1^q}(I)$ for some positive integers $p=u_{m-1}, q=v_{m-1}$. Then  by using $f_{jm^u=f_{(j+1)1^v}}$ and $f_{m1^q}=f_{(m-1)m^p}$ it follows that
\[
f_{(j_1+1)1^{v+\ell_1}}=f_{j_1 m^u 1^{\ell_1}}=f_{j_1m^{u-1}(m-1)m^p 1^{\ell_1-q}}=\cdots= f_{j_1m^{u-1}((m-1)m^{p-1})^s m1^{\ell_1-s q}}\
\]
for all $s=0,1,\ldots, \lfloor\frac{\ell_1}{q}\rfloor.$
Here $\lfloor r\rfloor$ stands for the integer part of a real number $r$.
So, by (\ref{eq:finite-1}) there exists a unique $y_1\in E$ and $N_1:=\lfloor\frac{\ell_1}{q}\rfloor+2$ different blocks
\begin{align*}
&W_{1,1}:=(j_1+1)1^{v+\ell_1},\\
& W_{1,2}:=j_1 m^u1^{\ell_1},\\
& W_{1,3}:=j_1m^{u-1}(m-1)m^{p}1^{\ell_1-q},\\
&\quad\vdots\\
& W_{1,s+2}:=j_1m^{u-1}((m-1)m^{p-1})^s m1^{\ell_1-sq},\\
&\quad \vdots\\
&W_{1,N_1}:=j_1m^{u-1}((m-1)m^{p-1})^{N_1-2} m 1^{l_1-(N_1-2)q},
\end{align*}
such that
\[y_0=f_{W_{1,i}}(y_1)\quad\textrm{for all }1\le i\le N_1.\]
 This implies that $y_0$ has $N_1$ different codings landing on $y_1$. Furthermore, note that $f_1(I)\cap f_m(I)=\emptyset$. One can verify that $y_0$ has precisely $N_1$ different codings landing on $y_1$. Observe that $x=f_{i_1\ldots i_{k_1}}(y_0)$ has a unique coding landing on $y_0$. Therefore, $x$ has precisely $N_1$ different codings landing on $y_1$.

If $y_1\in\uu_1(\Phi)$, then $k=N_1$ and the proof is complete by taking $y=y_1$. Otherwise, there exists a smallest integer $k_2\ge 0$ such that $T^{k_2}(y_1)\in S$. By the same argument as above to the point $T^{k_2}(y_1)$ we can find a unique point $y_2\in E$ and $N_2$ different blocks $W_{2,1}, W_{2,2},\ldots, W_{2, N_2}$ such that $f_{W_{2,i}}=f_{W_{2,j}}$ for any $i\ne j$, and $y_1=f_{W_{2,i}}(y_2)$. In other words, $y_1$ has precisely $N_2$ different codings landing on $y_2$. This, combined with the discussion from $x$ to $y_1$, implies that  $x$ has precisely $M_2\le N_1\cdot N_2$ different codings landing on $y_2$. We emphasize that $M_2$ might be strictly smaller than $N_1\cdot N_2$ if $y_1\in S$.

Since $k$ is finite, proceeding the above arguments for finitely many times we can find a unique point $y_J\in\uu_1(\Phi)$ and $M_J$ different blocks $W_{J,1}, W_{J,2}, \ldots, W_{J, M_J}$ such that
\begin{equation}\label{eq:finite-3}
x=f_{W_{J, 1}}(y_J),\quad\textrm{and}\quad f_{W_{J, i}}=f_{W_{J, j}}\quad\forall ~i\ne j.
\end{equation}
Furthermore, $x$ has precisely $M_J$ different codings landing on $y_J$. Since $y_J\in\uu_1(\Phi)$, this implies that $k=M_J$.

  In the following it suffices to prove that the local dimension of $\mu_\p$ at $x$ is the same as that at $y_J$. Note that the contraction ratios of $f_{W_{J, i}}, 1\le i\le M_J$ are the same, denote it by $r_J$. Define
\[
C_J:=\set{i_1\ldots i_n\in\set{1,\ldots,m}^*: \prod_{\ell=1}^n r_{i_\ell}\le r_J<\prod_{\ell=1}^{n-1}r_{i_\ell} }.
\]
Then $W_{J, i}\in C_J$ for all $1\le i\le M_J$, and
\[\set{1,\ldots, m}^\N=\bigcup_{\mathbf i\in C_J}[\mathbf i],\] where the union  is pairwise disjoint. Here $[\mathbf i]$ is a cylinder set generated by the block $\mathbf i$. So, for $r>0$
\begin{equation}\label{eq:measure-ball}
\begin{split}
\mu_\p(B(x, r))&=\sum_{W\in C_J}p_W\mu_\p\circ f_W^{-1}(B(x, r))\\
&=\sum_{i=1}^{M_J} p_{W_{J,i}}\mu_\p\circ f_{W_{J, i}}^{-1}(B(x, r))+\sum_{W\in C_J\setminus\set{W_{J, i}}_{i=1}^{M_J}}p_W\mu_\p\circ f_{W}^{-1}(B(x, r)).
\end{split}
\end{equation}
Observe that for any $W\in C_J\setminus\set{W_{J, i}}_{i=1}^{M_J}$ and for sufficiently small $r>0$ the open set $B(x, r)$ is separated from $f_W(E)$. So the second summation in (\ref{eq:measure-ball}) will disappear for small $r>0$. By (\ref{eq:finite-3}), (\ref{eq:measure-ball}) and Lemma \ref{lem:scaling property} it follows that for sufficiently small $r>0$,
\begin{align*}
\mu_\p(B(x, r))&=\sum_{i=1}^{M_J} p_{W_{J, i}}\;\mu_\p\Big(B(f_{W_{J, i}}^{-1}(x), r_{W_{J,i}}^{-1}r)\Big)=\mu_\p(B(y_J, r_J^{-1}r))\sum_{i=1}^{M_J}p_{W_{J,i}}.
\end{align*}
Observe that $\sum_{i=1}^{M_J}p_{W_{J, i}}$ and $r_J^{-1}$ are both  positive constants independent of $r$.
This implies
\[\underline{\dim}_{loc}\mu_\p(x)=\underline{\dim}_{loc}\mu_\p(y_J)\quad \textrm{and}\quad \overline{\dim}_{loc}\mu_\p(x)=\overline{\dim}_{loc}\mu_\p(y_J).\]
\end{proof}

By Proposition \ref{prop:finite-unique} the local dimension of $\mu_\p$ at points in $\uu_k(\Phi)$ is uniquely  determined by the local dimension of $\mu_\p$ at points in $\uu_1(\Phi)$. In the following result we explicitly determine the local dimension of $\mu_\p$ at points in $\uu_1(\Phi)$.
\begin{proposition}
\label{prop:local-unique}
Let $\Phi=(E, \set{f_i}_{i=1}^m)\in\ee$ with $f_i(x)=r_i x+b_i$ for $1\le i\le m$. Then for any $y\in\uu_1(\Phi)$ with its unique coding $(j_k)=j_1j_2\ldots\in\set{1,\ldots, m}^\N$ we have
\[
\underline{\dim}_{loc}\mu_\p(y)=\liminf_{n\to\f}\frac{\sum_{k=1}^n\log p_{j_k}}{\sum_{k=1}^n\log r_{j_k}},\quad \overline{\dim}_{loc}\mu_\p(y)=\limsup_{n\to\f}\frac{\sum_{k=1}^n\log p_{j_k}}{\sum_{k=1}^n\log r_{j_k}}.
\]
\end{proposition}
\begin{proof}
Let $y\in\uu_1(\Phi)$ with its unique coding $(j_k)$. Note by Condition (C) that  there exists at least one pair of disjoint neighboring basic intervals, i.e.,  $f_i(I)\cap f_{i+1}(I)=\emptyset$ for some $1\le i<m$.    Let $I=[a, b]$ be the convex hull of $E$, and let
\[
g:=\min\set{f_{i+1}(a)-f_i(b): ~f_i(I)\cap f_{i+1}(I)=\emptyset}.
\]
Then $g>0$ is the length of the smallest gap between the neighboring basic intervals. Denote by $r_{max}:=\max_{1\le i\le m}r_i$. Then there exists a large integer $N_1\ge 1$ such that
\begin{equation}\label{eq:april-28-1}
r_{max}^{N_1}(b-a)<g.
\end{equation}
Recall from Definition \ref{def:overlap-vector} that $(u_1,\ldots, u_m)$ and $(v_1,\ldots, v_m)$ are the overlapping vectors. Let
\[
N_2:=\max\set{\max_{1\le i\le m, u_i\ne \f} u_i, ~\max_{1\le i\le m, v_i\ne\f}v_i}.
\]
Take $n>N_2$ sufficiently large, and let
\[R_n:=r_{max}^{N_1}(b-a)\prod_{k=1}^n r_{j_k}.\]
 Clearly, the ball $B(y, R_n)$ contains the interval $f_{j_1\ldots j_{n+N_1}}(I)$. On the other hand, by (\ref{eq:april-28-1}) we have $R_n<g \prod_{k=1}^n r_{j_k}$. Observe that the basic intervals of higher level have similar geometrical structure as in the first level. So, $B(y, R_n)\cap I$ is included in the basic inerval $f_{j_1\ldots j_{n-N_2}}(I)$.

 Therefore,
\[
f_{j_1\ldots j_{n+N_1}}(I)\subset B(y, R_n)\cap I\subset f_{j_1\ldots j_{n-N_2}}(I),
\]
which implies
\begin{equation}\label{eq:april-28-2}
\prod_{k=1}^{n+N_1}p_{j_k}\le \mu_\p(B(y, R_n))\le \prod_{k=1}^{n-N_2}p_{j_k}.
\end{equation}
Here we emphasize that the integers $N_1, N_2$ are independent of $n$.
Taking the logarithms and dividing $\log R_n$ on both sides of Equation (\ref{eq:april-28-2}) yields
\begin{equation}\label{eq:april-28-3}
\frac{\sum_{k=1}^{n-N_2}\log p_{j_k}}{\log R_n}\le \frac{\log\mu_\p(B(y, R_n))}{\log R_n}\le \frac{\sum_{k=1}^{n+N_1}\log p_{j_k}}{\log R_n}.
\end{equation}
Denote by $p_{min}:=\min_{1\le i\le m}p_i$ and $r_{min}:=\min_{1\le i\le m}r_i$. Using the inequalities $p_i\ge p_{min}$ and $r_{min}\le r_i\le r_{max}$ in (\ref{eq:april-28-3}) it follows that
\begin{align*}
\frac{\log \mu_\p(B(y, R_n))}{\log R_n}&\ge \frac{\sum_{k=1}^{n-N_2}\log p_{j_k}}{\sum_{k=1}^{n-N_2}\log r_{j_k}+N_2\log r_{min}+N_1\log r_{max}+\log (b-a)}\\
\frac{\log \mu_\p(B(y, R_n))}{\log R_n}&\le \frac{\sum_{k=1}^{n-N_2}\log p_{j_k}+(N_1+N_2)\log p_{min}}{\sum_{k=1}^{n-N_2}\log r_{j_k}+(N_1+N_2)\log r_{max}+\log(b-a)}.
\end{align*}
Letting $n\to\f$ we obtain
\begin{equation}\label{eq:april-28-4}
\begin{split}
\liminf_{n\to\f}\frac{\log \mu_\p(B(y, R_n))}{\log R_n}&=\liminf_{n\to\f}\frac{\sum_{k=1}^n\log p_{j_k}}{\sum_{k=1}^n\log r_{j_k}},\\
\limsup_{n\to\f}\frac{\log \mu_\p(B(y, R_n))}{\log R_n}&=\limsup_{n\to\f}\frac{\sum_{k=1}^n\log p_{j_k}}{\sum_{k=1}^n\log r_{j_k}}.
\end{split}
\end{equation}

Now for $R_{n+1}<r\le R_n$ we have
\begin{equation}\label{eq:april-28-5}
\frac{\log \mu_\p(B(y, R_n))}{\log R_{n+1}}\le\frac{\log \mu_\p(B(y, r))}{\log r}\le \frac{\log\mu_\p(B(y, R_{n+1}))}{\log R_n}.
\end{equation}
Since $r_{min}\le \frac{R_{n+1}}{R_n}\le r_{max}$ and $R_n\to 0$ as $n\to\f$, we have $\frac{\log R_{n+1}}{\log R_n}\to 1$ as $n\to\f$. By (\ref{eq:april-28-4}) and (\ref{eq:april-28-5}) we conclude that
\[
\underline{\dim}_{loc}\mu_\p(y)=\liminf_{r\to 0}\frac{\log\mu_\p(B(y,r))}{\log r}=\liminf_{n\to\f}\frac{\log \mu_\p(B(y, R_n))}{\log R_n}=\liminf_{n\to\f}\frac{\sum_{k=1}^n\log p_{j_k}}{\sum_{k=1}^n\log r_{j_k}},
\]
and similarly,
\[\overline{\dim}_{loc}\mu_\p(y)=\limsup_{n\to\f}\frac{\sum_{k=1}^n\log p_{j_k}}{\sum_{k=1}^n\log r_{j_k}}.\]
\end{proof}

\subsection{Local dimension of $\mu_\p$ at points in $\uu_{\aleph_0}(\Phi)$}
Recall from Proposition \ref{cor:countable} that any $x\in\uu_{\aleph_0}(\Phi)$ must be of the form
\[
x=f_{\mathbf i}(f_1(b))\quad\textrm{if}\quad f_1(I)\cap f_2(I)\ne\emptyset
\]
for some $\mathbf i\in\set{1,\ldots, m}^*$, or of the form
\[
x=f_{\mathbf j}(f_m(a))\quad\textrm{if}\quad f_{m-1}(I)\cap f_m(I)\ne\emptyset
\]
for some $\mathbf j\in\set{1,\ldots, m}^*$.

The following lemma can be shown by a similar way as in the proof of Proposition \ref{prop:finite-unique}.
\begin{lemma}\label{lem:countable-finite}
Let $\Phi=(E, \set{f_i}_{i=1}^m)\in\ee$ with the convex hull $conv(E)=[a, b]$.
\begin{itemize}
\item If $x=f_{\mathbf i}(f_1(b))\in\uu_{\aleph_0}(\Phi)$ with $\mathbf i\in\set{1,\ldots, m}^*$, then
\[\dim_{loc}\mu_\p(x)=\dim_{loc}\mu_\p(f_1(b)).\]

\item If $x=f_{\mathbf j}(f_m(b))\in\uu_{\aleph_0}(\Phi)$ with $\mathbf j\in\set{1,\ldots, m}^*$, then
\[\dim_{loc}\mu_\p(x)=\dim_{loc}\mu_\p(f_m(b)).\]
\end{itemize}
\end{lemma}

By Lemma \ref{lem:countable-finite} it suffices to consider the local dimension of $\mu_\p$ at   $f_1(b)$ and $f_m(a)$.
\begin{proposition}
\label{prop:local-dim-countable}
Let $\Phi=(E, \set{f_i}_{i=1}^m)\in\ee$ with $I=conv(E)=[a, b]$, and let $\mathbf u=(u_1,\ldots, u_m), \mathbf v=(v_1,\ldots, v_m)$ be the overlapping vector.
\begin{enumerate}[{\rm(i)}]
\item If $f_1(I)\cap f_2(I)\ne\emptyset$, then
\begin{align*}
\dim_{loc}\mu_\p(f_1(b))&=\min\set{\frac{\log p_m}{\log r_m}, \frac{\log p_2+(v_1-1)\log p_1}{u_1\log r_m}}.
\end{align*}
\item If $f_{m-1}(I)\cap f_m(I)\ne\emptyset$, then
\begin{align*}
\dim_{loc}\mu_\p(f_m(a))&=\min\set{\frac{\log p_1}{\log r_1}, \frac{\log p_{m-1}+(u_{m-1}-1)\log p_m}{v_{m-1}\log r_1}}.
\end{align*}
\end{enumerate}
\end{proposition}
\begin{proof}
Since the proof of (ii) is similar, we only prove (i).  Suppose $f_1(I)\cap f_2(I)\ne\emptyset$. Then there exist integers  $u_1, v_1\ge 1$ such that  $f_1(I)\cap f_2(I)=f_{1m^{u_1}}(I)=f_{21^{v_1}}(I)$. We will determine the local dimension of $\mu_\p$ at  $x=f_1(b)\in\uu_{\aleph_0}(\Phi)$.

Denote by $\rho:=\min\set{b-f_m(a), b-f_{m-1}(b)}$.  Then $\rho>0$, and the ball $B(b, \rho)$ has empty intersection with $f_i(I)$ for any $1\le i<m$. This implies $B(b, \rho)\subset f_m(I)$.  On the other hand, since the set sequence $f_{m^n}(I)$ decreases to  $\set{b}$ as $n\to\f$, there exists a large integer $N$ such that $B(b, \rho)\supset f_{m^N}(I)$. Therefore,
\[
f_{m^N}(I)\subset B(b, \rho)\subset f_m(I).
\]
Take a large integer $n$ such that  $R_n:=r_1r_m^n \rho\in(0, 1)$. Note that $x=f_1(b)=f_{1m^n}(b)$. Then by Lemma \ref{lem:scaling property} and the above inclusions it follows that
\begin{equation}\label{eq:april-29-1}
f_{1m^{n+N}}(I)\subset f_{1m^n}(B(b, \rho))=B(x, R_n)\subset f_{1m^{n+1}}(I).
\end{equation}
Observe that $f_{1m^{u_1}}=f_{21^{v_1}}$. Then for any $k\in\N_{\ge u_1}$,
\begin{equation}\label{eq:april-29-2}
f_{1m^k}(I)=f_{(21^{v_1-1})^s 1m^{k-s u_1}}(I)\quad\textrm{for all }s=0,1,\ldots, \lfloor\frac{k}{u_1}\rfloor.
\end{equation}
Applying (\ref{eq:april-29-2}) to (\ref{eq:april-29-1}) yields that for $n\ge u_1$
\[
\bigcup_{s=0}^{\lfloor\frac{n+N}{u_1}\rfloor} f_{(21^{v_1-1})^s 1m^{n+N-s u_1}}(I)\subset B(x, R_n)\subset\bigcup_{s=0}^{\lfloor\frac{n+1}{u_1}\rfloor}f_{(21^{v_1-1})^s 1m^{n+1-s u_1}}(I).
\]
This implies
\begin{equation}\label{eq:april-29-3}
\max_{0\le s\le \lfloor\frac{n+N}{u_1}\rfloor} (p_2 p_1^{v_1-1})^s p_1 p_m^{n+N-s u_1}\le \mu_\p(B(x, R_n))\le \sum_{s=0}^{\lfloor\frac{n+1}{u_1}\rfloor}(p_2 p_1^{v_1-1})^s p_1 p_m^{n+1-s u_1}.
\end{equation}
Now we split the proof into the following two cases.

Case I. $p_2 p_1^{v_1-1}\le p_m^{u_1}$. Then taking the logarithms and dividing $\log R_n$ on both sides of (\ref{eq:april-29-3}) it follows that
\begin{align*}
\frac{\log \mu_\p(B(x,R_n))}{\log R_n}&\le \frac{\log\left(\max_{0\le s\le \lfloor\frac{n+N}{u_1}\rfloor}p_1\Big(\frac{p_2p_1^{v_1-1}}{p_m^{u_1}}\Big)^s\right)+(n+N)\log p_m}{n\log r_m+\log(r_1\rho)}\\
\frac{\log \mu_\p(B(x,R_n))}{\log R_n}&\ge \frac{\log\left(\sum_{s=0}^{\lfloor\frac{n+1}{u_1}\rfloor}p_1\Big(\frac{p_2p_1^{v_1-1}}{p_m^{u_1}}\Big)^s\right)+(n+1)\log p_m}{n\log r_m+\log(r_1\rho)}.
\end{align*}
Note that  $p_2 p_1^{v_1-1}\le p_m^{u_1}$. Letting $n\to\f$ in the above equation gives
\begin{equation}\label{eq:april-29-4}
\lim_{n\to\f}\frac{\log \mu_\p(B(x, R_n))}{\log R_n}=\frac{\log p_m}{\log r_m}.
\end{equation}

Case II. $p_2 p_1^{v_1-1}> p_m^{u_1}$. Then the inequalities in (\ref{eq:april-29-3}) can be rearranged as
\begin{align*}
 \mu_\p(B(x, R_n))&\ge (p_2 p_1^{v_1-1})^{\frac{n+N}{u_1}} \max_{0\le s\le \lfloor\frac{n+N}{u_1}\rfloor}   p_1\Big(\frac{p_m^{u_1}}{p_2p_1^{v_1-1}}\Big)^{\frac{n+N-su_1}{u_1}},\\
 \mu_\p(B(x, R_n))&\le (p_2 p_1^{v_1-1})^{\frac{n+1}{u_1}}\sum_{s=0}^{\lfloor\frac{n+1}{u_1}\rfloor}p_1  \Big(\frac{p_m^{u_1}}{p_2p_1^{v_1-1}}\Big)^{\frac{n+1-su_1}{u_1}}.
\end{align*}
By using $p_2 p_1^{v_1-1}> p_m^{u_1}$ and a similar argument as in Case I one can verify that
\begin{equation}\label{eq:april-29-5}
\lim_{n\to\f}\frac{\log\mu_\p(B(x, R_n))}{\log R_n}=\frac{\log p_2+(v_1-1)\log p_1}{u_1\log r_m}.
\end{equation}

By (\ref{eq:april-29-4}) and (\ref{eq:april-29-5}) it follows that
\[
\lim_{n\to\f}\frac{\log\mu_\p(B(x, R_n))}{\log R_n}=\min\set{\frac{\log p_m}{\log r_m}, \frac{\log p_2+(v_1-1)\log p_1}{u_1\log r_m}}.
\]
Note that $R_{n+1}/R_n$ is bounded away from zero and infinity, and $R_n\to 0$ as $n\to\f$. Therefore, we can conclude from the above equation that
\[
\dim_{loc}\mu_\p(x)=\lim_{r\to 0}\frac{\log \mu_\p(B(x, r))}{\log r}=\min\set{\frac{\log p_m}{\log r_m}, \frac{\log p_2+(v_1-1)\log p_1}{u_1\log r_m}}.
\]
\end{proof}
\begin{proof}
[Proof of Theorem \ref{th:local dimensions}]
For the local dimension of $\mu_\p$ at points in $\uu_k(\Phi)$ it follows from Propositions \ref{prop:finite-unique} and \ref{prop:local-unique}. And for the local dimension of $\mu_\p$ at points in $\uu_{\aleph_0}(\Phi)$ it can be deduced from Lemma \ref{lem:countable-finite} and Proposition \ref{prop:local-dim-countable}.
\end{proof}

\section{Final remarks}\label{sec:final remarks}
We believe some of the results obtained in this paper can be extended to a much more general class of SIFS (see e.g., \cite{Dajani_Jiang_2015}).   Observe that  Condition (D) in our class $\ee$, also called the complete overlap condition, is very strong. For a possible extension one might think of dropping out this complete overlap condition.
\begin{example}
Let $E$ be the attractor of the IFS $\{f_i(x)\}_{i=1}^{m}$ with $m\geq 5$. Denote the convex hull of $E$ by $I=[a,b]$. Suppose $\Phi=(E, \set{f_i}_{i=1}^m)$ satisfies the following conditions.
 \begin{itemize}
 \item $f_{1m}=f_{21}$;
 \item $f_{2}(I)\cap f_{i}(I)=\emptyset$ for any $i>2$;
 \item $f_{m}\cap f_j(I)=\emptyset$ for any   $j<m$;
  \item  $f_{i}(I)\subset (f_2(b), f_{m}(a))$ for any   $3\leq i\leq m-1$.
 \end{itemize}
Observe by the last condition that we have a lot of flexibility for the locations of $f_i(I)$ for $3\le i\le m-1$. Then $\Phi$ does not necessarily belong  to $\ee$. But one can still show that $\dim_H\uu_k(\Phi)=\dim_H\uu_1(\Phi)$ for any $k\in\N$.
\end{example}

The object studied in this paper is in one dimension. It would be interesting to consider a   higher dimensional analogue.

\section*{Acknowledgements}
 The authors wish to thank Dr.~Junjie Miao for some discussion on the earlier version of Theorem \ref{th:local dimensions}. Jiang was supported by NSFC No.~11701302 and K.C. Wong Magna Fund in Ningbo University. Kong was supported by NSFC No.~11401516, and by the Fundamental Research Funds for the Central Universities No.~2019CDXYST0015. Li was  supported by NSFC No.~11671147, 11571144 and Science and Technology Commission of Shanghai Municipality (STCSM)  No.~18dz2271000. Xi was supported by  NSFC No.~11831007.


\begin{thebibliography}{10}

\bibitem{Alcaraz-Baker-Kong-2019}
R.~Alcaraz~Barrera, S.~Baker, and D.~Kong.
\newblock Entropy, topological transitivity, and dimensional properties of
  unique {$q$}-expansions.
\newblock {\em Trans. Amer. Math. Soc.}, 371(5):3209--3258, 2019.

\bibitem{Baker_2015}
S.~Baker.
\newblock On small bases which admit countably many expansions.
\newblock {\em J. Number Theory}, 147:515--532, 2015.

\bibitem{Bro-Fis-Sim-2017}
R.~Broderick, L.~Fishman, and D.~Simmons.
\newblock Quantitative results using variants of schmidt's game: Dimension
  bounds, arithmetic progressions, and more.
\newblock {\em Acta Arith.}, 188(3):289--316, 2019.

\bibitem{Jon}
Jon Chaika.
\newblock Arithmetic progressions in middle-$n$th cantor sets.
\newblock {\em arXiv:1703.08998}, 2017.

\bibitem{Dajani_DeVries_2007}
K.~Dajani and M.~de~Vries.
\newblock Invariant densities for random {$\beta$}-expansions.
\newblock {\em J. Eur. Math. Soc. (JEMS)}, 9(1):157--176, 2007.

\bibitem{Dajani_Jiang_Kong_Li-2015}
K.~Dajani, K.~Jiang, D.~Kong, and W.~Li.
\newblock Multiple expansions of real numbers with digits set $\{0,1,q\}$.
\newblock {\em Math.~Z.}, 291(3-4):1605--1619, 2019.

\bibitem{Dajani-Kong-Yao-2019}
K.~Dajani, D.~Kong, and Y.~Yao.
\newblock On the structure of $\lambda$-cantor set with overlaps.
\newblock {\em Adv. in Appl. Math.}, 108:97--125, 2019.

\bibitem{DeVries_Komornik_2008}
M.~de~Vries and V.~Komornik.
\newblock Unique expansions of real numbers.
\newblock {\em Adv. Math.}, 221(2):390--427, 2009.

\bibitem{deVries-Komornik-2016}
M.~de~Vries and V.~Komornik.
\newblock Expansions in non-integer bases.
\newblock In {\em Combinatorics, words and symbolic dynamics}, volume 159 of
  {\em Encyclopedia Math. Appl.}, pages 18--58. Cambridge Univ. Press,
  Cambridge, 2016.

\bibitem{Deng-Harding-Hu-2009}
Q.~Deng, J.~Harding, and T.~Hu.
\newblock Hausdorff dimension of self-similar sets with overlaps.
\newblock {\em Sci. China Ser. A}, 52(1):119--128, 2009.

\bibitem{Durrett-2010}
R.~Durrett.
\newblock {\em Probability: theory and examples}, volume~31 of {\em Cambridge
  Series in Statistical and Probabilistic Mathematics}.
\newblock Cambridge University Press, Cambridge, fourth edition, 2010.

\bibitem{ErdosPaul}
P.~Erd\H{o}s and P.~Tur\'{a}n.
\newblock On some sequences of integers.
\newblock {\em J. London Math. Soc.}, 11(4):261--264, 1936.


\bibitem{Erdos_1939}
P.~Erd\H{o}s.
\newblock On a family of symmetric Bernoulli convolutions.
\newblock {\em Amer.J.Math.}, 61:974--976, 1939.



\bibitem{Erdos_Joo_Komornik_1990}
P.~Erd\H{o}s, I.~Jo\'{o}, and V.~Komornik.
\newblock Characterization of the unique expansions $1=\sum_{i=1}^\infty
  q^{-n_i}$ and related problems.
\newblock {\em Bull. Soc. Math. France}, 118:377--390, 1990.

\bibitem{Erdos_Horvath_Joo_1991}
P.~Erd{\H{o}}s, M.~Horv{\'a}th, and I.~Jo{\'o}.
\newblock On the uniqueness of the expansions {$1=\sum q^{-n_i}$}.
\newblock {\em Acta Math. Hungar.}, 58(3-4):333--342, 1991.

\bibitem{Erdos_Joo_1992}
P.~Erd{\H{o}}s and I.~Jo{\'o}.
\newblock On the number of expansions {$1=\sum q^{-n_i}$}.
\newblock {\em Ann. Univ. Sci. Budapest. E\"otv\"os Sect. Math.}, 35:129--132,
  1992.

\bibitem{Falconer_1990}
K.~Falconer.
\newblock {\em Fractal geometry}.
\newblock John Wiley \& Sons, Ltd., Chichester, third edition, 2014.
\newblock Mathematical foundations and applications.

\bibitem{Feng-Lau-2009}
D.-J. Feng and K.-S. Lau.
\newblock Multifractal formalism for self-similar measures with weak separation
  condition.
\newblock {\em J. Math. Pures Appl. (9)}, 92(4):407--428, 2009.

\bibitem{Feng-Sidorov-2011}
D.-J. Feng and N.~Sidorov.
\newblock Growth rate for beta-expansions.
\newblock {\em Monatsh. Math.}, 162(1):41--60, 2011.

\bibitem{Fraser-Yu-2018}
J.~M. Fraser and H.~Yu.
\newblock Arithmetic patches, weak tangents, and dimension.
\newblock {\em Bull. Lond. Math. Soc.}, 50(1):85--95, 2018.

\bibitem{Furstenberg}
H.~Furstenberg, Y.~Katznelson, and D.~Ornstein.
\newblock The ergodic theoretical proof of {S}zemer\'{e}di's theorem.
\newblock {\em Bull. Amer. Math. Soc. (N.S.)}, 7(3):527--552, 1982.

\bibitem{Glendinning_Sidorov_2001}
P.~Glendinning and N.~Sidorov.
\newblock Unique representations of real numbers in non-integer bases.
\newblock {\em Math. Res. Lett.}, 8:535--543, 2001.

\bibitem{Guo-Li-Wang-Xi-2012}
Q.~Guo, H.~Li, Q.~Wang, and L.~Xi.
\newblock Lipschitz equivalence of a class of self-similar sets with complete
  overlaps.
\newblock {\em Ann. Acad. Sci. Fenn. Math.}, 37(1):229--243, 2012.

\bibitem{Hochman-2014}
M.~Hochman.
\newblock On self-similar sets with overlaps and inverse theorems for entropy.
\newblock {\em Ann. of Math. (2)}, 180(2):773--822, 2014.

\bibitem{Hutchinson_1981}
J.~E. Hutchinson.
\newblock Fractals and self-similarity.
\newblock {\em Indiana Univ. Math. J.}, 30(5):713--747, 1981.

\bibitem{Jessen_Wintner_1935}
\newblock B.Jessen and A.Wintner. Distribution functions and the Riemann zeta function.
\newblock {\em Trans.~Amer.~Math.~Soc.}, 38:48--88, 1935.

\bibitem{Dajani_Jiang_2015}
K.~Jiang and K.~Dajani.
\newblock Subshifts of finite type and self-similar sets.
\newblock {\em Nonlinearity}, 30(2):659--686, 2017.

\bibitem{Jiang-Pei-Xi-2019}
K.~Jiang, Q.~Pei, and L.~Xi.
\newblock On arithmetic progressions in self-similar sets.
\newblock {\em arXiv:1901.06673}, 2019.


\bibitem{Keyon_1997}
R.~Kenyon.
\newblock Projecting the one-dimensional sierpinski gasket.
\newblock {\em Israel J. Math.}, 97:221-238, 1997.

\bibitem{Komornik_2011}
V.~Komornik.
\newblock Expansions in noninteger bases.
\newblock {\em Integers}, 11B:Paper No. A9, 30, 2011.

\bibitem{Komornik-Kong-2019}
V.~Komornik and D.~Kong.
\newblock Bases in which some numbers have exactly two expansions.
\newblock {\em J. Number Theory}, 195:226--268, 2019.

\bibitem{Komornik_Kong_Li_2015_1}
V.~Komornik, D.~Kong, and W.~Li.
\newblock Hausdorff dimension of univoque sets and devil's staircase.
\newblock {\em Adv. Math.}, 305:165--196, 2017.

\bibitem{Laba2016}
I.~Laba and M.~Pramanik.
\newblock Arithmetic progressions in sets of fractional dimension.
\newblock {\em Geom. Funct. Anal.}, 19(2):429--456, 2009.

\bibitem{Lau-Ngai-1999}
K.-S. Lau and S.-M. Ngai.
\newblock Multifractal measures and a weak separation condition.
\newblock {\em Adv. Math.}, 141(1):45--96, 1999.

\bibitem{Lind_Marcus_1995}
D.~Lind and B.~Marcus.
\newblock {\em An introduction to symbolic dynamics and coding}.
\newblock Cambridge University Press, Cambridge, 1995.

\bibitem{Mauldin_Williams_1988}
R.~D. Mauldin and S.~C. Williams.
\newblock Hausdorff dimension in graph directed constructions.
\newblock {\em Trans. Amer. Math. Soc.}, 309(2):811--829, 1988.

\bibitem{Ngai_Wang_2001}
S.~M. Ngai and Y.~Wang.
\newblock Hausdorff dimension of self-similar sets with overlaps.
\newblock {\em J. Lond. Math. Soc}, 63:655--672, 2001.

\bibitem{Ngai-Xie-2018}
S.-M. Ngai and Y.~Xie.
\newblock {$L^q$}-spectrum of self-similar measures with overlaps in the
  absence of second-order identities.
\newblock {\em J. Aust. Math. Soc.}, 106(1):56--103, 2019.

\bibitem{Parry_1960}
W.~Parry.
\newblock On the $\beta$-expansions of real numbers.
\newblock {\em Acta Math. Acad. Sci. Hungar.}, 11:401--416, 1960.

\bibitem{Rao-Wen-1998}
H.~Rao and Z.-Y. Wen.
\newblock A class of self-similar fractals with overlap structure.
\newblock {\em Adv. in Appl. Math.}, 20(1):50--72, 1998.

\bibitem{Renyi_1957}
A.~R\'{e}nyi.
\newblock Representations for real numbers and their ergodic properties.
\newblock {\em Acta Math. Acad. Sci. Hungar.}, 8:477--493, 1957.


\bibitem{Sidorov_2003}
N.~Sidorov.
\newblock Almost every number has a continuum of {$\beta$}-expansions.
\newblock {\em Amer. Math. Monthly}, 110(9):838--842, 2003.

\bibitem{Sidorov_2009}
N.~Sidorov.
\newblock Expansions in non-integer bases: lower, middle and top orders.
\newblock {\em J. Number Theory}, 129(4):741--754, 2009.

\bibitem{Solomyak_1995}
B.~Solomyak.
\newblock On the random series $\sum\pm \lambda^{i}$(an Erd\H{o}s problem).
\newblock {\em Ann.~of Math.}, 142:611--625, 1995.

\bibitem{Szemeredi1}
E.~Szemer\'{e}di.
\newblock On sets of integers containing no four elements in arithmetic
  progression.
\newblock {\em Acta Math. Acad. Sci. Hungar.}, 20:89--104, 1969.

\bibitem{Szemeredi2}
E.~Szemer\'{e}di.
\newblock On sets of integers containing no {$k$} elements in arithmetic
  progression.
   Collection of articles in memory of Juri\v{i} Vladimirovi\v{c} Linnik.
   \newblock {\em Acta Arith.}, 27:199--245, 1975.


\bibitem{Zou_Lu_Li_2012}
Y.~Zou, J.~Lu, and W.~Li.
\newblock Unique expansion of points of a class of self-similar sets with
  overlaps.
\newblock {\em Mathematika}, 58(2):371--388, 2012.

\end{thebibliography}

\end{document}